\documentclass[12pt]{article}
\usepackage{framed}
\usepackage{natbib}
\RequirePackage{kpfonts}
\RequirePackage[sf,bf,small,raggedright]{titlesec}
\usepackage{microtype}
\setlength{\textheight}{8.5in}
\setlength{\textwidth}{6in}
\setlength{\topmargin}{-0.375in}
\setlength{\oddsidemargin}{.25in}
\setlength{\evensidemargin}{.25in}
\setlength{\headheight}{0.200in}
\setlength{\headsep}{0.4in}
\setlength{\footskip}{0.500in}
\setlength{\parskip}{1ex}
\setlength{\parindent}{1.25cm}


\usepackage{textcomp}
\usepackage{amssymb}
\usepackage{bbm}
\usepackage{fancyhdr}
\usepackage{amsmath}
\usepackage{amsbsy,amsthm}
\usepackage{amscd}
\usepackage{latexsym}
\usepackage{graphicx}   
\usepackage{pdfsync}
\usepackage{blkarray}
\usepackage{multirow}
\usepackage{hyperref}
\usepackage{tcolorbox}


\pagestyle{fancy}
\fancyhead{}
\fancyfoot{}
\fancyfoot[c]{\thepage}

\pagestyle{fancy}

\newcommand{\R}{\mathbb{R}}

\newcommand{\inr}[1]{\left\langle #1 \right\rangle}

\newcommand{\N}{\mathbb{N}}

\newcommand{\E}{\mathbb{E}}
\newcommand{\M}{\mathbb{M}}

\newcommand{\cT}{\mathcal{T}}
\newcommand{\Borel}{\mathcal{B}}

\newcommand{\eps}{\varepsilon}

\newtheorem{theorem}{Theorem}[section]
\newtheorem{lemma}[theorem]{Lemma}
\newtheorem{corollary}[theorem]{Corollary}
\newtheorem{proposition}[theorem]{Proposition}
\theoremstyle{definition}
\newtheorem{remark}[theorem]{Remark}
\newtheorem{definition}[theorem]{Definition}
\newtheorem{example}[theorem]{Example}

\renewcommand{\epsilon}{\varepsilon}

\numberwithin{equation}{section}

\def \endproof
{{\mbox{}\nolinebreak\hfill\rule{2mm}{2mm}\par\medbreak}}

\newcommand{\ol}{\overline}

\newcommand{\wh}{\widehat}
\newcommand{\argmin}{\mathop{\mathrm{argmin}}}

\newcommand{\X}{\mathcal{X}}

\newcommand{\cE}{\mathcal{E}}

\newcommand{\EXP}{\mathbb{E}}
\newcommand{\PROB}{\mathbb{P}}

\newcommand{\cov}{\mathbb{C}{\rm ov}}

\newcommand{\lambdamax}{\lambda_{\text{max}}}

\usepackage[normalem]{ulem}  

\begin{document}

\title{Robust, sub-Gaussian mean estimators in metric spaces}
 
\author{Daniel Bartl \\[-0.1em]
\small
Department of Mathematics and \\ [-0.3em]
\small 
Department of Statistics \& Data Science,\\ [-0.3em]
\small
National University of Singapore 
\and
G\'abor Lugosi \\ [-0.1em]
\small
Department of Economics and Business, \\ [-0.3em]
\small
Pompeu  Fabra University 
\\ [-0.3em]
\small
ICREA, Pg. Lluís Companys 23,\\  [-0.3em]
\small
08010 Barcelona, Spain \\ [-0.3em]
\small
Barcelona School of Economics
\and
Roberto I. Oliveira\\ [-0.1em]
\small
Instituto de Matemática Pura e Aplicada \\[-0.3em]
\small
Estrada Dona Castorina, 110. \\ [-0.3em] \small Rio de Janeiro, RJ, Brazil \\ [-0.3em]
\and
Zoraida F. Rico \\ [-0.1em]
\small
Department of Decision Sciences, \\ [-0.3em]
\small
Bocconi University, Milan, Italy
 }

\maketitle

\begin{abstract}
Estimating the mean of a random vector from i.i.d.\ data has received considerable attention, and the optimal accuracy one may achieve with a given confidence is fairly well understood by now. 
When the data take values in more general metric spaces, an appropriate extension of the notion of the mean is the Fréchet mean. While asymptotic properties of the most natural Fréchet mean estimator (the empirical Fréchet mean) have been thoroughly researched, non-asymptotic performance bounds 
have only been studied recently. 
The aim of this paper is to study the performance of estimators of the Fréchet mean in general metric spaces under possibly heavy-tailed and contaminated data. 
In such cases, the empirical Fréchet mean is a poor estimator. 
We propose a general estimator based on high-dimensional extensions of trimmed means and prove general performance bounds.
Unlike all previously established bounds, ours generalize the optimal bounds known for Euclidean data.
The main message of the bounds is that, much like in the Euclidean case, the optimal accuracy is governed by two ``variance'' terms: a ``global variance'' term that is independent of the prescribed confidence, and a potentially much smaller, confidence-dependent ``local variance'' term. 
We apply our results for metric spaces with curvature bounded from below, such as Wasserstein spaces, and for uniformly convex Banach spaces.
\end{abstract}

\clearpage

\section{Introduction}

Estimating the expected value of a random vector from independent, identically distributed (i.i.d.) samples is one of the most fundamental problems in statistics. 
While the classical central limit theorem provides asymptotic guarantees for the convergence of the empirical mean, modern machine learning and high-dimensional statistics focus on \emph{non-asymptotic} theory: obtaining high-confidence estimates that hold for a given finite sample.
Despite the importance and apparent simplicity of the task, the problem poses significant (and perhaps unexpected) mathematical challenges.
In particular, sharp bounds for mean estimation in high-dimensional Euclidean spaces $(\mathbb{R}^m, \|\cdot\|_2)$ for a general sample that may be heavy-tailed or contaminated have only been established recently; see
\cite{AbZh22,DeLe22,DiKaKaLiMoSt19,Hop20,LeLe20,LuMe19,LuMe18a,LuMe16a,LuMe20,LuMe24,Min18,oliveira2023trimmedsamplemeansrobust,OlRi22}
for a sample of the recent literature, and \cite{lugosi2019mean} for a survey of related developments. 
Importantly, mean estimators achieving near-optimal performance are \emph{not} the sample mean (whose performance drastically decreases unless one imposes highly restrictive assumptions on the data), but significantly more complex functions of the data, based on  \emph{median-of-means tournaments} and multi-dimensional extensions of \emph{trimmed-mean estimators}.
Roughly speaking, the main message of these results is that, as long as the second moment of a random vector exists, there exist estimators of the mean that perform as well as if the distribution of the random vector was Gaussian.

With the rise of complex and structured data, there has been a significant shift toward understanding ``mean'' estimation in general metric spaces $(\mathcal{X}, d)$ rather than only in the Euclidean framework.
In such settings, the notion of a ``mean'' of a random vector $X$ with values in $(\X,d)$ must of course be generalized, with the most widely adopted approach being due to Fr\'echet~\cite{frechet}: if $\mathbb{E}[d^2(X, x)]$ is finite for some (and hence all) $x \in \mathcal{X}$, the \emph{Fr\'echet mean} $\mu$ is any minimizer of the map $x \mapsto \mathbb{E}[d^2(X, x)]$. 

In this paper, we work in settings where the Fr\'echet mean is unique, thus
\[\mu = \mathop{\rm argmin}_{x \in \mathcal{X}} \, \mathbb{E}[d^2(X,x)]~.\]
Note that the Fr\'echet mean is indeed a generalization of the Euclidean mean as clearly $\mu=\E[X]$ when $(\mathcal{X}, d) = (\mathbb{R}^m, \|\cdot\|_2)$.
The existence and uniqueness of the Fr\'echet mean  has been thoroughly studied; we refer the reader to Ziezold \cite{Ziezold1977} for early results on existence in quasi-metric spaces, Afsari \cite{Afs11} for a detailed analysis in Riemannian \( L^p \) settings---particularly under nonpositive curvature assumptions---and to Sturm \cite{Stu03} for results in metric spaces with curvature bounded above in the Alexandrov sense. See also Huckemann and Eltzner \cite{HuEl21} for recent developments in non-Euclidean settings.

A particularly important example of such a metric space $(\mathcal{X}, d)$ that arises frequently in data science (see, e.g., \cite{arjovsky2017wasserstein,carlier2010matching,genevay2018learning,solomon2015convolutional}) is the Wasserstein space $(\X,d)=(\mathcal{P}_2(\mathbb{R}^m), \mathcal{W}_2)$, the space of probability measures with finite second moment endowed with the Wasserstein distance.
This space has non-negative curvature and will often serve as a guiding example throughout this paper.
In the setting of the Wasserstein space, the Fréchet mean was first studied by Agueh and Carlier \cite{AgCa11} and is often called  Wasserstein barycenter.
It appears naturally in various contexts.
For instance, in image analysis, the Wasserstein barycenter is the natural mechanism to average images, see Peyré and Cuturi \cite{PeCu19} for a survey and further references.

The statistical estimation of the Fr\'echet mean based on $n$ i.i.d.\ observations $X_1, \ldots, X_n$ has been quite thoroughly studied, when the estimator is the \emph{empirical Fréchet mean}
 defined by
\begin{align}
\label{eq:into.def.empirical.mean}
    \ol\mu_n= \argmin_{x\in \X} \frac{1}{n} \sum_{i=1}^n d^2(X_i,x)~.
\end{align}
The empirical Fr\'echet mean is the obvious generalization of the ordinary empirical mean.
However, the analysis of this natural estimator  poses significantly more challenges than in the classical Euclidean setting. 
While several authors have studied the asymptotic properties of the empirical Fréchet mean (see 
\cite{AvMu24,BhPa03,EvJa24,hotz2024central,Jaf24,kroshnin2021statistical,santoro2023large,Sch22}),
finite-sample guarantees have only been obtained recently, see \cite{brunel2023concentration,brunel2025finite,YuPa23}.
For example, in the Wasserstein space $(\mathcal{P}_2(\mathbb{R}^m), \mathcal{W}_2)$, meaningful statistical guarantees were largely out of reach until recently. 
Earlier results (see Schötz \cite{Sch19}, Ahidar-Coutrix, Le Gouic, and Paris \cite{AhLePa20}), suffered from the curse of dimensionality---requiring a sample size on the order of $n \sim \exp(m)$ to obtain reliable estimates. (Such a requirement is clearly infeasible even in moderate-dimensional applications.)

A recent breakthrough by Le Gouic, Paris, Rigollet, and Stromme~\cite{LePaRiSt22} has fundamentally changed the perspective on Fr\'echet mean estimation in metric spaces by establishing statistical guarantees that overcome the curse of dimensionality. 
Their analysis considers a broad geometric setting in which $(\X, d)$ is an Alexandrov space (i.e., a complete geodesic space with curvature bounded from below), encompassing  all Riemannian manifolds and in particular the Wasserstein space. 
Under a synthetic geometric condition of the space $\X$ encoded in a constant $c(\X)$, they prove that the empirical Fr\'echet mean $\overline{\mu}_n$ satisfies
\begin{equation}
\label{eq:legouic}
 \mathbb{E}\left[ \, d^2(\overline{\mu}_n, \mu) \right]^{1/2} 
\leq c(\X) \cdot \sqrt{\frac{\mathbb{E} \, d^2(X, \mu)}{n}}~.
\end{equation}
Notably, when $(\X,d)$ is Wasserstein space, the constant $c(\X)$ can be controlled using tools from optimal transport theory, see Section \ref{sec:example.wasserstein} for more details.

These results established in \cite{LePaRiSt22} reveal that statistical estimation in non-linear and even high-dimensional spaces---such as Wasserstein space---is far more tractable than previously believed. 
While \eqref{eq:legouic} bounds the expected squared distance between the estimator $\ol{\mu}_n$ and the Fréchet mean $\mu$, in this paper we are primarily interested in bounds that hold with high probability. 
More precisely, for a given probability or error $\delta \in (0,1)$, our goal is to understand the best possible accuracy that one can achieve. 
In other words, our aim is to understand what the smallest radius $r_n$ is for which there exists a mean estimator $\wh\mu_n$ achieving 
\[
\mathbb{P}(d(\wh{\mu}_n, \mu) \leq r_n) \geq 1 - \delta~.
\]
In particular, this paper addresses the following  natural questions: 

\begin{tcolorbox}
  \textbf{Main questions:}
  \begin{itemize}
    \item Given \emph{sub-Gaussian} data $X_1, \ldots, X_n$ (for an appropriate notion of sub-Gaussianity) and a desired confidence level $1 - \delta$, what is the smallest radius $r_n$ such that the empirical Fréchet mean $\ol\mu_n$ satisfies
    \[
      \mathbb{P}(d(\overline{\mu}_n, \mu) \leq r_n) \geq 1 - \delta?
    \]
    \item Is it possible to construct an estimator $\widehat{\mu}_n$ for the Fr\'echet mean $\mu$ that achieves the same error as $\overline{\mu}_n$ in the sub-Gaussian setting, but under minimal assumptions on the data and even in the presence of outliers?
  \end{itemize}
\end{tcolorbox}

These questions are motivated by the above-mentioned recent progress in the Euclidean setting. 
The contribution of this work is a positive solution to both questions that significantly generalizes what is known for mean estimation in Euclidean spaces.

First, we identify an appropriate notion of sub-Gaussian random vectors $X$  in Alexandrov spaces and derive high-probability concentration bounds for the empirical Fr\'echet mean. 
In particular, we show that---much like in the Euclidean setting---, the smallest radius $r_n$ for which $\mathbb{P}(d(\overline{\mu}_n, \mu) \leq r_n) \geq 1 - \delta$, naturally decomposes into a sum of a \emph{global variance} component that is independent of the confidence parameter $\delta$, and a typically much smaller \emph{local variance} component.
Notably, when applying our general theory to the Euclidean setting $X\in\R^m$, our notion of sub-Gaussianity simplifies to the classical notion, and our estimate on $r_n$ coincides, up to a constant factor, with the standard one:
\begin{align}
    \label{eq:intro.r.n}
    r_n \sim \sqrt \frac{\EXP d^2(X,\mu)}{n} + \sqrt\frac{\lambda_{\max}(\Sigma) \log(1/\delta)}{n}~,
\end{align}
where $\Sigma$ is the covariance matrix of $X$, and $\lambdamax(\Sigma)$ denotes its largest eigenvalue. Note that since $\EXP d^2(X,\mu)=\rm{trace}(\Sigma)$, 
the first term is ``dimension-dependent'', and typically much larger than the second term, at least when $\delta$ is not exponentially small.
In the general setting of Alexandrov spaces, we obtain a similar expression for $r_n$, though the notion of the covariance matrix needs to be defined properly.
However---analogous to the Euclidean setting---the sub-Gaussian assumption is rarely satisfied in practice, and the performance of $\overline{\mu}_n$ deteriorates drastically for even mildly heavier-tailed data.

Second, and arguably more significantly, we design a so-called \emph{sub-Gaussian estimator} for the Fr\'echet mean.
That is, we construct an estimator $\widehat{\mu}_n$ (i.e., a measurable function from $\X^n$ to $\X$ that takes as input the sample and outputs an estimate for $\mu$) that satisfies
\[
  \mathbb{P}\left(d(\widehat{\mu}_n, \mu) \leq r_n \right) \geq 1 - \delta
\]
with the same choice of $r_n$  (up to a constant factor) as in the setting of sub-Gaussian data---but under minimal assumptions on the data.
In particular,  the data may come from a heavy-tailed distribution (having only a finite second moment), and some of the data points may even be adversarially corrupted.
Note that in this setting the statistical behavior of $\widehat{\mu}_n$ is in stark contrast to that of the empirical Fr\'echet mean $\overline{\mu}_n$, whose performance degrades rapidly when the data deviates (even slightly) from the sub-Gaussian regime.
Moreover, it is rather obvious that even a single (adversarially) corrupted data point $X_i$ may cause the empirical Fr\'echet mean to become arbitrarily inaccurate---an issue that led, for example, to median-based methods (see, e.g.\ \cite{carlier2024wasserstein,fletcher2009geometric}), even though in practice the mean is often the quantity of real interest.

Our results for Alexandrov spaces can be viewed as a genuine generalization of the classical Euclidean mean estimation problem, in that we establish robust, sub-Gaussian estimators that apply to Alexandrov spaces.

In addition to the results in the setting of Alexandrov spaces, we demonstrate that the methods naturally extend to the estimation of Fr\'echet means in uniformly convex Banach spaces of power type 2.
In particular, we design robust and sub-Gaussian estimators in that context as well.

\begin{remark}[\emph{A general median-of-means estimator.}]
\label{rem:MOM.general}
We conclude the introduction with a comparison to the work of Hsu and Sabato~\cite{HsSa16}, who, among other contributions, introduce a generalization of the median-of-means estimator to arbitrary metric-space-valued data.  
Adapting the results in \cite{HsSa16} for the purpose of illustration, suppose that the Fréchet mean $\mu$ exists and is unique, and define $R(\ell) := \mathbb{E}[ d(\overline{\mu}_\ell,\mu) ]$
to be the expected error of the empirical Fréchet mean (see~\eqref{eq:into.def.empirical.mean}) based on $\ell \geq 1$ samples.  
Then, for every $\delta \in (\exp(-\frac{n}{16}),\frac{1}{2})$, there exists an estimator $\widetilde{\mu}_n$ such that, with probability at least $1-\delta$,  
\begin{align}
    \label{eq:intro.MOM}
d(\widetilde{\mu}_n,\mu)
\leq 8 R\!\left( \frac{n}{20\log(1/\delta)} \right).
\end{align}
For completeness, we present the (very short) proof of this fact in Section~\ref{proof.MOM.general}.

Importantly, the bound in \eqref{eq:intro.MOM} is essentially never sharp in spaces with additional structure.
Even in the Euclidean case (or in the framework of~\cite{LePaRiSt22} where~\eqref{eq:legouic} holds), the best consequence of~\eqref{eq:intro.MOM} is that, with probability at least $1-\delta$, 
\[
d(\widetilde{\mu}_n,\mu)
\leq 36 c(\X) \sqrt{ \frac{ \mathbb{E}\, d^2(X,\mu) \, \cdot \,   \log(1/\delta)}{n} } .
\]
In particular, there is no error decomposition as in \eqref{eq:intro.r.n}, and the rate here is substantially worse than the optimal one (which our estimator achieves).

Thus, while the results of~\cite{HsSa16} apply to very general metric spaces,  the error bounds are suboptimal for spaces with some structure.
This is natural because the proof techniques are effectively one-dimensional and thus cannot yield optimal rates---even in Euclidean spaces. 
We refer to \cite{kim2025robust,YuPa23} for results of similar flavor.
\end{remark}

\subsection{Organization.}  
The paper is organized as follows. In Section \ref{sec:preliminary}, 
we introduce the contamination model and define trimmed means for functions.
We also
briefly discuss the necessity of regularity conditions to avoid problems of measurability and existence of the Fréchet mean. To this end, in the Appendix we introduce the notion of
`favorable spaces' that guarantees the existence of the Fréchet mean and the measurability of the empirical Fréchet mean. We also prove the existence of a measurable choice for the trimmed mean estimator (see Proposition \ref{prop:estimator.measurable}).

In Section \ref{sub:trimmedestimators}, 
we introduce a general estimator of the Fréchet mean, and establish some basic tools for their study.

In Section \ref{sec:geodesic}
we establish the main performance bound for the proposed estimator in general metric spaces with curvature bounded from below. Theorem \ref{thm:geodesic.main} provides high-probability bounds for the trimmed Fréchet mean under weak assumptions; more specifically, the favorable space condition and a uniform lower bound on the so-called ``hugging'' function. We also point out that that these general bounds are optimal in the case of separable Hilbert spaces. We also analyze the empirical Fréchet mean under appropriate sub-Gaussian conditions.

In Section \ref{sec:banach.space}, we consider another class of 
metric spaces. In particular, we analyze the proposed general Fréchet mean estimator in uniformly convex Banach spaces of power type 2, using the techniques developed in earlier sections. 

Section \ref{sec:examples} presents an example that illustrates the necessity of some of our assumptions. We also specialize our results to the setting of probability measures on $\mathbb{R}^m$ with the Wasserstein-2 distance, as well as Fréchet means for $\ell^p$-norms on $\mathbb{R}^m$, and derive general bounds in these cases.  

We conclude the paper by describing some directions for future research. 
\section{Preliminaries}
\label{sec:preliminary}

\subsection{Fréchet means}

Throughout this paper, we assume that $(\X,d)$ is a separable metric space and 
$X$ is a random variable taking values in the measurable space $(\X,\sigma(\X))$ (the $\sigma$-field $\sigma(\X)$ will be the Borel $\sigma$-field whenever $(\X,d)$ is a Polish metric space).

We assume that $\EXP d^2(X,x)<\infty$ for some $x\in \X$, and we define 
the Fréchet mean by 
\[
\mu = \mathop{\rm argmin}_{x \in \mathcal{X}} \, \mathbb{E}[d^2(X,x)]~.
\]
Throughout the paper, we assume that the minimum is well defined and unique.  For the existence of the minimum, it is sufficient that $(\X,d)$ is a \emph{favorable} metric space. This general notion is defined in Appendix \ref{proof.existence.barycenter}, where the existence of the minimum is also proven. Uniqueness of $\mu$ is a more special property that we verify in each of the special cases considered.

\subsection{Random samples and contamination}\label{sub:randomsamples}

Throughout this paper,  $X_1,\ldots,X_n$  denote independent and identically distributed (i.i.d.) random variables taking values in $\X$, and having the same distribution as $X$.
The task of the statistician is to estimate the Fréchet mean $\mu$ by a (measurable) function of the i.i.d.\ sample $X_1,\ldots,X_n$. 

We allow a certain fraction of the sample to be \emph{adversarially contaminated}, defined as follows.
For $\epsilon\in [0,1)$, an $\epsilon$-contamination of $(X_1,\ldots,X_n)$ is another set of $(\X,\sigma(\X))$-valued random variables $X_1^\epsilon,\ldots,X_n^\epsilon$, defined on the same probability space as the $X_i$, such that
\[\#\left\{i\in \{1,\ldots,n\} \,:\, X_i\neq X_i^\epsilon \right\}\leq \epsilon n~. \]
Note that we assume that the value of $\epsilon$ is known to the statistician.

\subsection{Definition and uniform concentration of the trimmed mean}\label{sub:definetrimmed}

Our estimator builds on high-dimensional extensions on the classical idea of the \emph{trimmed mean}, as developed by Lugosi and Mendelson \cite{LuMe20}, Oliveira, Orenstein, and Rico \cite{trimmedmean1d}, and Oliveira and Resende \cite{oliveira2023trimmedsamplemeansrobust}.
In particular, the techniques developed in \cite{oliveira2023trimmedsamplemeansrobust} serve as the basis of our general estimators and their analysis.
Here we recall the definition and its uniform concentration properties.

Given integers $n>0$ and $t\geq 0$ satisfying $2t<n$  we define  the trimmed mean of a function  $f\colon \X\to \R$ by
\[T_{n,t}(f)(x_1,\ldots,x_n) := \frac{1}{n-2t} \sum_{j=t+1}^{n-t} \left( (f(x_i))_{i=1}^n \right)_{(j)},
\quad x_1,\ldots,x_n\in\mathcal{X}.\]
Here, for a vector $a=(a_i)_{i=1}^n\in\R^n$, we denote by $(a_{(j)})_{j=1}^n$ its  order statistics (i.e., nondecreasing rearrangement); in particular $a_{(1)}= \min_{i=1,\ldots,n} a_i$ and  $a_{(n)}=\max_{i=1,\ldots,n} a_i$.

One can easily check that, if $f:\X\to\R$ is $(\sigma(\X)/\mathcal{B}(\R))$-measurable, then $T_{n,t}(f)$ is measurable with respect to the product $\sigma$-field over $\X^n$. 
Additionally, $T_{n,t}(\cdot)$ is monotone and homogeneous:  if $f\geq g$ pointwise, then $T_{n,t}(f)\geq T_{n,t}(g)$ pointwise, and if $\lambda\in\R$, then $T_{n,t}(\lambda f)=\lambda T_{n,t}(f)$.

Finally,  we set
\begin{align*}
\wh{T}_{n,t}(f)
&:=T_{n,t}(f)(X_1,\ldots,X_n) \quad\text{and} \\ 
\wh{T}^\epsilon_{n,t}(f)
&:=T_{n,t}(f)(X_1^\epsilon,\ldots,X_n^\epsilon). 
\end{align*}
Since $\wh{T}_{n,t}=\wh{T}_{n,t}^\epsilon$ for $\epsilon=0$, in what follows we shall state our statistical results only for $\wh{T}_{n,t}^\epsilon$.


The trimmed mean offers the significant advantage over the standard sample mean of achieving estimation accuracy for potentially heavy-tailed and corrupted data comparable to the accuracy of the standard sample mean with Gaussian data.
To formulate the result, let  $\mathcal{F}$  denote a class of measurable functions from $\mathcal{X}$  to  $\mathbb{R}$ and assume that $ \E|f(X)| < \infty$ for all  $ f \in \mathcal{F} $. 
The following result is a simplified version of Oliveira and Resende \cite[Theorem 3.1]{oliveira2023trimmedsamplemeansrobust}.

\begin{theorem}
\label{thm:trimmed.mean}
Let $\mathcal{F}$ be countable\footnote{This assumption can be relaxed whenever $\mathcal{F}$ is separable in a suitable sense; see e.g., the notion of $1$-compatibility in \cite{oliveira2023trimmedsamplemeansrobust}.}, let  $\delta\in (0,1)$,  set
\[t=t(n,\delta,\varepsilon):= \lfloor \epsilon n\rfloor +\lceil \ln(2/\delta)\rceil \vee \left\lceil \tfrac{\epsilon\wedge \left(\frac{1}{2} - \epsilon\right)}{2}\,n\right\rceil\]
and assume that $t<\frac{1}{2}n$.  
Define 
\begin{align*}
V_n&:= \EXP\sup_{f\in\mathcal{F}}\left|\frac{1}{n}\sum_{i=1}^nf(X_i) - \EXP\,f(X)\right|, \\
\nu_p&:=\sup_{f\in\mathcal{F}} \left(\EXP\,|f(X)-\EXP\,f(X)|^p \right)^{1/p}, \qquad (p\geq 1)~,
\end{align*}
and set
$C_\epsilon:=384 (1 + \frac{\epsilon}{\epsilon\wedge \left(\frac{1}{2}-\epsilon\right)})$.
Then, with probability at least $1-\delta$, 
\begin{align*}
\sup_{f\in\mathcal{F}} \left| \wh{T}^\epsilon_{n,t}(f) - \EXP\,f(X) \right| 
\leq & C_\epsilon \left(8V_n + \inf_{p\in[1,2]} \nu_p\,\left(\frac{\ln(3/\delta)}{n}\right)^{1- \frac{1}{p}} + \inf_{p\in[1,\infty)}\nu_p\,\epsilon^{1-\frac{1}{p}}\right)~.
\end{align*}
\end{theorem}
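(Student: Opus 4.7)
The plan is to reduce the trimmed mean on contaminated data to a uniform statement about truncated empirical means, for which the uniform concentration constant $V_n$ and the moments $\nu_p$ are the natural quantities. First I would deal with the $\epsilon$-contamination. Since $t\geq \lfloor\epsilon n\rfloor$ by construction, changing at most $\lfloor\epsilon n\rfloor$ entries displaces $T_{n,t}$ by the average of at most $2\lfloor\epsilon n\rfloor$ additional order statistics of $(f(X_i))_i$, so that $\wh T^\epsilon_{n,t}(f)$ is sandwiched between $\wh T_{n,t-\lfloor\epsilon n\rfloor}(f)$ and $\wh T_{n,t+\lfloor\epsilon n\rfloor}(f)$ computed on the clean sample $X_1,\ldots,X_n$. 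The expected size of those extra order statistics is controlled via Markov's inequality applied to $|f(X)-\EXP f(X)|^p$, yielding, uniformly in $f$, a bound of order $\nu_p\,\epsilon^{1-1/p}$; optimizing over $p\in[1,\infty)$ gives the last summand in the stated inequality.

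Next I would analyze $\wh T_{n,t}(f)$ on the clean sample by a quantile-sandwich argument. For a confidence level $\alpha\asymp t/n$ let $q^-(f,\alpha)$ and $q^+(f,\alpha)$ denote the corresponding lower and upper quantiles of $f(X)$, and let $\phi^{\pm}_f$ be the one-sided clippings of the identity at $q^{\pm}(f,\alpha)$. A one-sided Bernstein inequality applied to the bounded indicator classes $\{\IND_{f(X)\leq c}:f\in\mathcal{F},\ c\in\R\}$, together with a single union bound over the countable class $\mathcal{F}$, shows that with probability at least $1-\delta$ the $(t+1)$-st and $(n-t)$-th order statistics of $(f(X_i))_{i=1}^n$ lie in $[q^-(f,\alpha),q^+(f,\alpha)]$ uniformly in $f$. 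Monotonicity and homogeneity of $T_{n,t}$ then yield
\[
\tfrac{1}{n}\sum_{i=1}^n\phi^-_f(f(X_i))\;\leq\;\wh T_{n,t}(f)\;\leq\;\tfrac{1}{n}\sum_{i=1}^n\phi^+_f(f(X_i)),
\]
and by standard symmetrization and the contraction principle (applied to the $1$-Lipschitz clippings), each side concentrates uniformly around $\EXP\phi^{\pm}_f(f(X))$ at rate $O(V_n)$. The residual bias $|\EXP\phi^{\pm}_f(f(X))-\EXP f(X)|$ is the truncated tail of $|f(X)-\EXP f(X)|$ beyond the $\alpha$-quantile; with $\alpha\asymp\log(1/\delta)/n$, Markov on $|\cdot|^p$ bounds it by $\nu_p(\log(1/\delta)/n)^{1-1/p}$, and the infimum over $p\in[1,2]$ produces the middle summand. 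The prefactor $C_\epsilon$ simply collects the numerical Bernstein constants together with the overhead of choosing $t$ large enough to simultaneously absorb $\epsilon n$ corruptions and the $\log(1/\delta)$-quantile window with a safety factor.

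The main obstacle is the uniform one-sided Bernstein step: because the envelope of $\mathcal{F}$ can be unbounded, Talagrand's inequality cannot be invoked directly on the clipped class. The workaround is to push the uniform concentration down to the bounded indicator classes $\{\IND_{f(X)\leq c}\}$ in order to pin down the quantiles, and only then to transfer back to the clippings using the moment bounds $\nu_p$. This transfer is exactly what fixes the combination $\nu_p(\log(1/\delta)/n)^{1-1/p}$ and is the quantitative heart of \cite[Thm.~3.1]{oliveira2023trimmedsamplemeansrobust}; the present simplified form then follows by inserting convenient choices of the quantile level and the truncation window into that general result.
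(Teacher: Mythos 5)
The paper does not prove this theorem at all: it is stated verbatim as ``a simplified version of Oliveira and Resende [Theorem 3.1]'' and the proof is the citation. Your final paragraph, which derives the stated bound by specializing that general result, therefore coincides exactly with what the paper does, and at that level your proposal is fine.

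The reconstruction of the internal argument that you sketch before invoking the citation, however, contains a genuine gap at the uniformity step. You claim that a one-sided Bernstein inequality for the indicator classes $\{\IND_{f(\cdot)\leq c}\}$ ``together with a single union bound over the countable class $\mathcal{F}$'' localizes the $(t+1)$-st and $(n-t)$-th order statistics of $(f(X_i))_i$ between population quantiles at level $\alpha\asymp t/n$, uniformly in $f$. A union bound over a countably \emph{infinite} class gives nothing: you cannot split $\delta$ over infinitely many events, and the countability hypothesis in the theorem is there only to make $\sup_{f\in\mathcal{F}}$ measurable (see the footnote), not to enable a union bound. Moreover, uniform two-sided quantile localization over an arbitrary class with only finite second moments is not available in general; this is precisely why the actual argument in \cite{oliveira2023trimmedsamplemeansrobust} does not proceed this way. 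There, one first proves a \emph{deterministic} comparison between $T_{n,t}(f)$ and a truncated empirical mean at a level $M$, valid whenever the number of indices with $|f(x_i)-\EXP f(X)|>M$ is at most $t$; the uniform-over-$\mathcal{F}$ control of that exceedance count is then obtained by bounding $\IND\{|y|>M\}\leq |y|/M$ (or $|y|^p/M^p$) pointwise and absorbing the resulting supremum of an empirical process into the $V_n$ term, combined with Markov's inequality for the expectation. That mechanism --- uniformity bought by the $V_n$ term rather than by a union bound --- is the step your sketch is missing; the remaining ingredients (the sandwich of the contaminated trimmed mean between clean trimmed means with shifted windows, symmetrization and contraction for the clipped class, and Markov on $|f(X)-\EXP f(X)|^p$ for the $\nu_p(\log(1/\delta)/n)^{1-1/p}$ and $\nu_p\epsilon^{1-1/p}$ terms) are in the right spirit.
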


\section{A general estimator of the Fr\'{e}chet mean}
\label{sub:trimmedestimators}

In this section we define a general estimator of the Fréchet mean. The estimator is a version of the trimmed-mean estimators studied and analyzed by Oliveira and Resende \cite{oliveira2023trimmedsamplemeansrobust}. Here we establish measurability of the estimator in our context and recall a useful inequality from \cite{oliveira2023trimmedsamplemeansrobust} that will help us analyze the performance of the Fréchet-mean estimator.

Following the notation introduced in Section \ref{sub:randomsamples}, let $X_1,\ldots,X_n$ be i.i.d.\ copies of a random variable $X$ taking values in a favorable metric space $(\X,d)$ and let $X_1^\epsilon,\ldots,X_n^\epsilon$ be an adversarially contaminated version of $(X_1,\ldots,X_n)$. Let $0\leq t<n/2$ be an integer. The estimators studied in this paper are defined as solutions to the following minimax optimization problem:
\[\wh{\mu}^\epsilon_{n,t}\in \argmin_{b\in \X} \sup_{a\in\X} \wh{T}_{n,t}^\epsilon\left( d^2(b,\cdot) - d^2(a,\cdot)  \right)~,\]
where $\wh{T}_{n,t}^\eps$ is the trimmed mean operation defined in Section \ref{sub:definetrimmed}. 
To motivate the definition, notice that, for $t=0$ and $\epsilon=0$, this estimator is the usual empirical Fr\'{e}chet mean
\[\ol{\mu}_{n}\in \argmin_{b\in \X}\frac{1}{n}\sum_{i=1}^n\,d^2(X_i,b)~.\]

It is not immediately obvious that $\wh{\mu}^\epsilon_{n,t}$ is a measurable function of the data. In Appendix \ref{sub:measurabletrimmed} we show that measurability is not a problem in favorable spaces. In particular, we establish the following:

\begin{proposition}
\label{prop:estimator.measurable}
If $(\X,d)$ is favorable, then there exists a measurable mapping 
\[\wh{\mu}_{n,t} \colon (\X^n,\mathcal{B}(\X)^n)\to(\X,\mathcal{B}(\X))\]
satisfying the following:
for all $(x_1,\ldots,x_n)\in\X^n$,
\[\wh{\mu}_{n,t}(x_1,\ldots,x_n)
\in \argmin_{b\in \X} \sup_{a\in\X} {T}_{n,t}\left( d^2(b,\cdot) - d^2(a,\cdot)  \right)(x_1,\ldots,x_n)~.\]
\end{proposition}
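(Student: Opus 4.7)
Write $\Phi(b,x_1,\ldots,x_n):=\sup_{a\in\X}T_{n,t}(d^2(b,\cdot)-d^2(a,\cdot))(x_1,\ldots,x_n)$, so that the proposition asks for a measurable selector from the set-valued map
\[S(x_1,\ldots,x_n):=\argmin_{b\in\X}\Phi(b,x_1,\ldots,x_n).\]
The plan is (i) to show that $\Phi$ is a normal integrand on $\X\times\X^n$, namely Borel measurable jointly and lower semicontinuous in $b$ for fixed $(x_1,\ldots,x_n)$; (ii) to show that $S(x_1,\ldots,x_n)$ is a nonempty closed set for every $(x_1,\ldots,x_n)\in\X^n$ by exploiting the favorable space structure; and (iii) to conclude via the Kuratowski--Ryll-Nardzewski measurable selection theorem applied to the Polish-valued map $S$.

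\textbf{Continuity and measurability of $\Phi$.} For fixed $a\in\X$ and $(x_1,\ldots,x_n)\in\X^n$, the map $b\mapsto d^2(b,x_i)-d^2(a,x_i)$ is continuous for every $i$. Since $T_{n,t}$ is a fixed linear combination of order statistics of a vector in $\R^n$, and order statistics are continuous functions of their arguments, $b\mapsto T_{n,t}(d^2(b,\cdot)-d^2(a,\cdot))(x_1,\ldots,x_n)$ is continuous. The supremum over $a$ is therefore lower semicontinuous in $b$. For joint measurability I would first observe that for each fixed $(b,a)$ the map $(x_1,\ldots,x_n)\mapsto T_{n,t}(d^2(b,\cdot)-d^2(a,\cdot))(x_1,\ldots,x_n)$ is continuous; the function $(b,a,x_1,\ldots,x_n)\mapsto T_{n,t}(d^2(b,\cdot)-d^2(a,\cdot))(x_1,\ldots,x_n)$ is then continuous on $\X\times\X\times\X^n$. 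Using separability of $\X$ and picking a countable dense subset $\{a_k\}_{k\in\N}\subset\X$, continuity in $a$ lets us replace the sup by the countable supremum $\sup_{k}T_{n,t}(d^2(b,\cdot)-d^2(a_k,\cdot))(x_1,\ldots,x_n)$, whence $\Phi$ is jointly Borel measurable and, in particular, a normal integrand.

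\textbf{Nonemptiness of the argmin.} This is the step where the favorable space hypothesis is used. Writing $g_a(b):=T_{n,t}(d^2(b,\cdot)-d^2(a,\cdot))(x_1,\ldots,x_n)$, the inequality $g_a(b)\geq \tfrac{1}{n-2t}\sum_{j=t+1}^{n-t}(d^2(b,x_{(j)})-d^2(a,x_{(j)}))$ (using monotonicity of $T_{n,t}$ on pointwise comparisons) shows that $\Phi(b,x_1,\ldots,x_n)\to\infty$ as $d(b,x_1)\to\infty$; that is, $\Phi(\cdot,x_1,\ldots,x_n)$ is coercive, so its sublevel sets are bounded. In a favorable space (as defined in the appendix), bounded closed sets are sufficiently compact to ensure that a lower semicontinuous coercive function attains its infimum; this is precisely the mechanism already used there to prove existence of the Fréchet mean. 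Consequently $S(x_1,\ldots,x_n)$ is a nonempty closed subset of $\X$ for every data vector.

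\textbf{Measurable selection.} Having established that $\Phi$ is a normal integrand with nonempty closed argmin sets, I would invoke the measurable selection theorem for normal integrands (equivalently, Kuratowski--Ryll-Nardzewski applied to the closed-valued map $S$, whose weak measurability follows from normality of $\Phi$): there exists a Borel measurable map $\wh{\mu}_{n,t}\colon \X^n\to\X$ with $\wh{\mu}_{n,t}(x_1,\ldots,x_n)\in S(x_1,\ldots,x_n)$ for every $(x_1,\ldots,x_n)$. I expect the main technical obstacle to be the careful use of the favorable space property in the coercivity/attainment step, since that is the only place where separability and Polishness are not by themselves enough; the rest of the argument is a routine packaging of standard measurable selection machinery.
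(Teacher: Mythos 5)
Your overall architecture---coercivity, attainment of the minimum on a closed ball, then Kuratowski--Ryll-Nardzewski applied to the argmin multifunction---is the same as the paper's (Theorem \ref{thm:favorablemeasurable} via the general selection result, Theorem \ref{thm:measurableselection}). However, there is a genuine gap at the attainment step, which is precisely the only place where the favorable-space hypothesis does any work. You assert that ``in a favorable space, bounded closed sets are sufficiently compact to ensure that a lower semicontinuous coercive function attains its infimum.'' The lower semicontinuity you established for $\Phi(\cdot,x_1,\ldots,x_n)$ is with respect to the metric topology of $d$; but Definition \ref{def:favorable} only provides compactness of closed balls in an auxiliary, generally strictly weaker topology $\cT_{o,R}$, and a function that is lsc for the metric topology need not be lsc for $\cT_{o,R}$ (in a separable Hilbert space with the weak topology, $-\|\cdot\|$ is norm-continuous yet fails to be weakly lsc at $0$). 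To conclude attainment you must show that $b\mapsto T_{n,t}(x_1,\ldots,x_n,a,b)$ is $\cT_{o,R}$-lower semicontinuous on $B[o,R]$ for each fixed $a$; this is exactly the content of the paper's Proposition \ref{prop:minoverclosedballsfavorable} and requires an argument, since $T_{n,t}$ is a sum of order statistics of the functions $d^2(\cdot,x_j)-d^2(a,x_j)$, and only the $\cT_{o,R}$-lower semicontinuity of each $d(x_j,\cdot)$ is granted by the definition. The gap is fixable: either run the explicit estimate in Proposition \ref{prop:minoverclosedballsfavorable}, or observe that the $j$-th order statistic equals $\min_{|I|=j}\max_{i\in I}$ of the coordinates and that lower semicontinuity is preserved under finite minima, finite maxima, finite sums, and arbitrary suprema.

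A second, smaller issue: you settle for lower semicontinuity of $\Phi$ in $b$ and then claim that weak measurability of the argmin map ``follows from normality of $\Phi$.'' On a general Polish $\X$ this deduction is not free: the proof of Theorem \ref{thm:measurableselection} needs $(x_1,\ldots,x_n)\mapsto\inf_{b\in B}\Phi(b,x_1,\ldots,x_n)$ to be measurable for closed balls $B$, which it obtains by reducing the infimum to a countable dense subset of $B$---a reduction that requires continuity, not mere lower semicontinuity, of $\Phi$ in $b$. Fortunately $\Phi(\cdot,x_1,\ldots,x_n)$ is in fact continuous in $b$: the family $T_{n,t}(x_1,\ldots,x_n,a,\cdot)$ is equi-Lipschitz on closed balls uniformly in $a$ (cf.\ Proposition \ref{prop:continuousfavorable}), so the supremum over $a$ is continuous. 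You should claim and use that continuity; with it, your selection step goes through exactly as in the paper.
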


\subsection{A useful lemma}

Next, we present a lemma based on \cite[Lemma 6.4]{oliveira2023trimmedsamplemeansrobust} that offers a convenient tool to bound the performance of the trimmed-mean estimator.

\begin{lemma}
\label{lem:localization}
  Let $\mu\in\X$ and $r>0$. Suppose that there exists $\gamma\geq 0$ such that the following hold:
 \begin{itemize}
 \item  for all $b\in \X$ with $d(b,\mu) > r$,
  \begin{equation}
\label{eq:lower1}
    \wh{T}_{n,t}^\epsilon\left(
      d^2(b,\cdot) - d^2(\mu,\cdot) \right)> \gamma~;
  \end{equation}
  \item
   for all $b\in \X$ with $d(b,\mu) \leq r$,
  \begin{equation}
\label{eq:lower2}
    \wh{T}_{n,t}^\epsilon\left(  d^2(b,\cdot) - d^2(\mu,\cdot) \right) \ge - \gamma~.
  \end{equation}
 \end{itemize}
	Then $d(\wh{\mu}^\epsilon_{n,t},\mu) \le r$.
\end{lemma}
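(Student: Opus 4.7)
The plan is a short argument by contradiction, exploiting the fact that $\wh{\mu}^\epsilon_{n,t}$ is a minimizer of the function
\[
\phi(b) := \sup_{a\in\X} \wh{T}_{n,t}^\epsilon\bigl(d^2(b,\cdot) - d^2(a,\cdot)\bigr),\qquad b\in\X.
\]
First, I would obtain a lower bound on $\phi(\wh{\mu}^\epsilon_{n,t})$ under the assumption $d(\wh{\mu}^\epsilon_{n,t},\mu)>r$. Since $\phi(b)\geq \wh{T}_{n,t}^\epsilon(d^2(b,\cdot)-d^2(\mu,\cdot))$ by taking $a=\mu$ in the supremum, hypothesis \eqref{eq:lower1} immediately yields $\phi(\wh{\mu}^\epsilon_{n,t})>\gamma$.

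Next, I would derive a matching upper bound $\phi(\mu)\leq\gamma$. The key observation is that the trimmed mean $T_{n,t}$ is \emph{odd}: for any measurable $f$, $T_{n,t}(-f)=-T_{n,t}(f)$, because reversing the sign reverses the order statistics and the trimmed average $\frac{1}{n-2t}\sum_{j=t+1}^{n-t}(\cdot)_{(j)}$ is invariant under the change of indices $j\mapsto n-j+1$. Applied with $f=d^2(a,\cdot)-d^2(\mu,\cdot)$, this gives
\[
\wh{T}_{n,t}^\epsilon\bigl(d^2(\mu,\cdot)-d^2(a,\cdot)\bigr) \;=\; -\,\wh{T}_{n,t}^\epsilon\bigl(d^2(a,\cdot)-d^2(\mu,\cdot)\bigr)
\]
for every $a\in\X$. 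Splitting the supremum defining $\phi(\mu)$ into the two cases $d(a,\mu)>r$ and $d(a,\mu)\leq r$ and using \eqref{eq:lower1}–\eqref{eq:lower2} on the right-hand sides, both cases yield a value $\leq\gamma$, so $\phi(\mu)\leq\gamma$.

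Finally, since $\wh{\mu}^\epsilon_{n,t}$ minimizes $\phi$, we have $\phi(\wh{\mu}^\epsilon_{n,t})\leq \phi(\mu)\leq \gamma$, contradicting the strict lower bound $\phi(\wh{\mu}^\epsilon_{n,t})>\gamma$ obtained under the assumption $d(\wh{\mu}^\epsilon_{n,t},\mu)>r$. Hence $d(\wh{\mu}^\epsilon_{n,t},\mu)\leq r$, as claimed. There is no real obstacle here; the only mildly delicate point is verifying the oddness property $T_{n,t}(-f)=-T_{n,t}(f)$ (to convert the lower bounds \eqref{eq:lower1}–\eqref{eq:lower2} into an upper bound on $\phi(\mu)$), which is a direct consequence of the definition of order statistics, together with the observation that the strict/non-strict distinction between \eqref{eq:lower1} and \eqref{eq:lower2} is precisely what is needed to make the final contradiction work.
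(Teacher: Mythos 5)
Your proposal is correct and follows essentially the same route as the paper's proof: a lower bound $\phi(b)>\gamma$ for $b$ outside the ball via the choice $a=\mu$, an upper bound $\phi(\mu)\leq\gamma$ via the case split on $d(a,\mu)$ combined with the sign-reversal property $T_{n,t}(-f)=-T_{n,t}(f)$ (which the paper invokes as homogeneity of $T_{n,t}$), and then minimality of $\wh{\mu}^\epsilon_{n,t}$. The only difference is cosmetic — you phrase it as a contradiction while the paper argues directly that $\psi(b)>\psi(\mu)$ for all $b$ with $d(b,\mu)>r$.
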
 
\begin{proof} Write $\wh{\mu}:=\wh{\mu}^\epsilon_{n,t}$ for simplicity.
  Define, for all $b\in \X$,
  \[
\psi(b) := \sup_{a\in \X} \wh{T}_{n,t}^\epsilon\left(
     d^2(b,\cdot) - d^2(a,\cdot) \right)
 \]
 so that $\wh{\mu}$ is a minimizer of $\psi$.
 We show that $\psi(b)>\psi(\mu)$ for all $b\in\mathcal{X}$ satisfying $d(b,\mu)>r$, from which the claim follows.
 
 Indeed, let  $b\in \X$ satisfy that $d(b,\mu)> r$.
 Then, by \eqref{eq:lower1}, 
\[
   \psi(b) \ge \wh{T}_{n,t}^\epsilon\left(d^2(b,\cdot) - d^2(\mu,\cdot) \right) > \gamma~.
\]
	We now claim that $\psi(\mu)\le \gamma$. 
	To that end, let $a\in \X$.
	If $d(a,\mu)\leq r$, then  $\wh{T}_{n,t}^\epsilon (d^2(\mu,\cdot) - d^2(a,\cdot) )\leq \gamma$ by \eqref{eq:lower2} and the fact that the operator $\widehat{T}_{n,t}^\eps(\cdot)$ is homogeneous. If, on the other hand, $d(a,\mu)>r$, then   $\wh{T}_{n,t}^\epsilon(d^2(\mu,\cdot) - d^2(a,\cdot) )<-\gamma$ by \eqref{eq:lower1} and homogeneity again.
	Hence $\psi(\mu)\leq \gamma$ by the definition of $\psi$ and homogeneity of the trimmed mean.
\end{proof}

\section{Spaces with curvature bounded from below}
\label{sec:geodesic}

In this section, we consider the case of Fr\'{e}chet mean estimation for metric spaces whose curvature is bounded from below. The precise statement of our results requires several definitions, that we recall in Section \ref{sub:curvature.bounded.below.defs} below along with some useful facts. In Section \ref{sub:frechetcurvature}, we present our main results for Fr\'{e}chet mean estimation and compare them to those of Le Gouic, Paris,  Rigollet, and Stromme \cite{LePaRiSt22}.
Proofs are deferred to the last subsections.  

\subsection{Metric spaces with curvature bounded from below: facts and definitions}\label{sub:curvature.bounded.below.defs} 
 We mostly follow the exposition of Le Gouic, Paris,  Rigollet, and Stromme \cite{LePaRiSt22} and also refer to Alexander, Kapovitch, and Petrunin \cite{alexander2023} and Chewi, Niles-Weed, and Rigollet \cite{ChewiNilesWeedRigollet2025} for additional material and background. 

\subsubsection{Comparisons and curvature}
Curvature upper and lower bounds in metric spaces are defined by comparison with the usual model spaces that we now recall. Given $\kappa\in\R$, we let $(M^2_\kappa,d_\kappa)$ denote the two-dimensional manifold with sectional curvature equal to $\kappa$ everywhere, with its intrinsic Riemannian metric $d_\kappa$. Concretely,
\begin{itemize}
\item for $\kappa>0$, $M^2_\kappa$ is a sphere of radius $1/\sqrt{\kappa}$, and $d_\kappa$ is its standard Riemannian metric;
\item for $\kappa<0$, $M^2_\kappa$ is the upper half-plane with the Riemannian metric $(dx^2+dy^2)/y^2|\kappa|$;
\item for $\kappa=0$, $M^2_\kappa=\R^2$ with the standard Euclidean metric.
\end{itemize}
We define $D_\kappa=+\infty$ for $\kappa\leq 0$ and $D_\kappa:=\pi/\sqrt{\kappa}$ for $\kappa>0$. 

Now consider a metric space $(\X,d)$. We will assume that this space is {\em a complete geodesic space}, meaning that, for any distinct $x,y\in\X$, there exists a curve $\gamma:[a,b]\to\X$  with $a<b$, $\gamma(a)=x$, $\gamma(b)=y$, and
\[\forall a\leq u<v\leq b\,:\,d(\gamma(u),\gamma(v)) = \left(\frac{v-u}{b-a}\right)\,d(x,y)~.\]
We call such a curve a \emph{constant-speed geodesic} in $\X$ from $x$ to $y$. 

A triangle in $\X$ is a set of three points $\{x,y,z\}\subset \X$. The perimeter of this triangle is simply
\[{\rm per}\{x,y,z\}:=d(x,y)+d(y,z)+d(x,z)~.\]
It can be shown that, given any $\kappa\in\R$ and any triangle with ${\rm per}\{x,y,z\}<2D_\kappa$, one can find a {\em comparison triangle} $\{\bar{x},\bar{y},\bar{z}\}\subset M_\kappa^2$ with the same interpoint distances as $x,y,z$. We say that $(\X,d)$ has {\em curvature lower bounded by $\kappa$}, and write ${\rm curv}(\X)\geq \kappa$, if for all triangles $\{x,y,z\}\subset \X$ with ${\rm per}(x,y,z)<2D_\kappa$, all comparison triangles $\{\bar{x},\bar{y},\bar{z}\}\subset M_\kappa^2$, and all constant speed geodesics $\gamma:[0,1]\to \X$ from $x$ to $y$, we have
\[\forall 0\leq u\leq 1\,:\,  d(z,\gamma(u))\geq d_\kappa(\bar{z},\bar{\gamma}(u))~,\]
where $\bar{\gamma}:[0,1]\to M_\kappa^2$ is the unique constant speed geodesic in $M_\kappa^2$ from $\bar{x}$ to $\bar{y}$. If the inequality in the preceding display is reversed, we say that $(\X,d)$ has {\em curvature upper bounded by $\kappa$}, and write ${\rm curv}(\X)\leq \kappa$.

\subsubsection{Tangent cones and hugging functions}

Metric spaces $(\X,d)$ with ${\rm curv}(\X)\geq \kappa$ for some $\kappa\in\R$ have a rich local structure. Given $\mu\in\X$, one can find a {\em tangent cone} $T_\mu\X$, which is a complete metric space with a certain metric 
\[(u,v)\in (T_\mu\X)^2\mapsto \|u-v\|_\mu\]
and a distinguished zero element $o_\mu\in T_\mu\X$. We also define an ``inner product''~operation on $T_\mu\X$ as follows: 
\begin{equation}\label{eq:tangentip}(u,v)\in (T_\mu\X)^2\mapsto \langle u,v\rangle_\mu := \frac{1}{2}\left(\|u-o_\mu\|_\mu^2 + \|v-o_\mu\|_\mu^2 - 2\|u-v\|^2_\mu\right)~.\end{equation}
Roughly, elements of $T_\mu\X$ correspond to geodesics emanating from $\mu$ at different speeds. This is similar to standard tangent spaces in Riemannian geometry, and the notation introduced above draws attention to the similarity between the two concepts. While in general the tangent cone is not a vector space, or even a geodesic space, there is a rich-enough analogy with the Riemannian setting that some concepts relating tangent spaces to manifold geometry can be adapted to the setting of metric spaces with curvature bounded below, see \cite[Part III]{alexander2023}. The proofs of the main results of \cite{LePaRiSt22} and of our own Theorem \ref{thm:geodesic.main} are examples of this approach. 

For $(u,v)\in (T_\mu\X)^2$ as above, the inner product $\langle u,v\rangle_\mu$ has an alternative expression in terms of the so-called Alexandrov angle~$\sphericalangle_\mu^\kappa(\gamma,\sigma)\in [0,\pi]$ between unit-speed geodesics $\gamma,\sigma$ corresponding to $u$ and $v$:
\[\langle u,v\rangle_\mu = \|u-o_\mu\|_\mu\,\|v-o_\mu\|_\mu\,\cos\sphericalangle_\mu^\kappa(\gamma,\sigma).\]
We do not define $\sphericalangle_\mu^\kappa(\gamma,\sigma)$ here, but note that the above formula implies the ``Cauchy-Schwarz inequality'':
\begin{equation}\label{eq:CSinthecone}\forall (u,v)\in (T_\mu\X)^2\,:\, |\langle u,v\rangle_\mu| \leq \|u-o_\mu\|_\mu\,\|v-o_\mu\|_\mu~.\end{equation}

We are especially interested in concepts related to first-order differential calculus. This requires the introduction of a {\em log map} $\log_\mu:\X\to T_\mu\X$ satisfying the following property:
\begin{equation}\label{eq:logpdistance}\forall x\in\X\,:\, \|\log_\mu x - o_\mu\|_\mu = d(x,\mu)~.\end{equation}
In particular, it follows that $o_\mu=\log_\mu \mu$. 
 The $\log_\mu$ map can be chosen to be measurable with respect to the $\sigma$-field generated by balls in the tangent cone, and this suffices for the operations described in the remainder of the paper to be measurable. Notice that $\log_\mu x = x-\mu$ when $(\X,d)$ is a Hilbert space.

One way to relate the $\log_\mu$ map to properties of $\X$ is through the {\em hugging function} \cite[equation (3.1)]{LePaRiSt22}. Given $x,y,\mu\in\X$ with $\mu\neq y$, we define
\begin{equation}\label{eq:huggingoriginal}
k_{\mu}^y(x):=1 - \frac{\|\log_\mu x - \log_\mu y\|_\mu^2-d^2(x,y)}{d^2(\mu,y)}~.\end{equation}

The main finding of Le Gouic, Paris, Rigollet, and Stromme \cite{LePaRiSt22} is that assuming a lower bound on the hugging function is both sufficient for proving fast convergence of empirical Fr\'{e}chet means and a natural condition in examples. We close this subsection by considering two families of such examples. The first one includes all Hilbert spaces and also all Riemannian manifolds whose sectional curvature is between $\kappa\leq 0$ and $0$.

\begin{theorem}[See equation (2.4) in \cite{LePaRiSt22}]\label{prop:huggingnonpositive} If $\kappa\leq {\rm curv}(\X)\leq 0$ for some $\kappa\leq 0$, then $d(x,y)\geq \|\log_\mu x - \log_\mu y\|_\mu$ for all $x,y,\mu\in\X$. As a result, $k_\mu^y(x)\geq 1$ always holds.\end{theorem}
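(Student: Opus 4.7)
The second conclusion $k_\mu^y(x)\geq 1$ is immediate from the first: by the definition \eqref{eq:huggingoriginal} (which requires $y\neq\mu$), the bound $d(x,y)\geq \|\log_\mu x -\log_\mu y\|_\mu$ forces the fraction on the right-hand side to be non-positive, so $k_\mu^y(x)\geq 1$. The whole task is therefore to establish the distance inequality.

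For the degenerate case $x=\mu$ or $y=\mu$, property \eqref{eq:logpdistance} together with $o_\mu=\log_\mu\mu$ gives $\|\log_\mu x - \log_\mu y\|_\mu = d(x,y)$ directly, so I assume $x,y\neq\mu$. Let $\gamma,\sigma\colon [0,1]\to\X$ be the constant-speed geodesics from $\mu$ to $x$ and to $y$, corresponding to $u=\log_\mu x$ and $v=\log_\mu y$, and write $\alpha := \sphericalangle_\mu^\kappa(\gamma,\sigma)$. Combining the inner-product formula \eqref{eq:tangentip} with its trigonometric representation from the displayed equation just below \eqref{eq:tangentip}, and with $\|\log_\mu z - o_\mu\|_\mu = d(z,\mu)$ from \eqref{eq:logpdistance}, one obtains
$$\|u-v\|_\mu^2 = d^2(\mu,x) + d^2(\mu,y) - 2\,d(\mu,x)\,d(\mu,y)\,\cos\alpha.$$
On the other hand, by the Euclidean law of cosines, the comparison triangle $\{\bar\mu,\bar x,\bar y\}\subset \R^2$ having the same side lengths as $\{\mu,x,y\}$ has its angle $\tilde\alpha$ at $\bar\mu$ satisfying
$$d^2(x,y) = d^2(\mu,x) + d^2(\mu,y) - 2\,d(\mu,x)\,d(\mu,y)\,\cos\tilde\alpha.$$
Subtracting these two identities, the desired inequality $d(x,y)\geq\|u-v\|_\mu$ becomes equivalent to $\cos\alpha\geq\cos\tilde\alpha$, i.e.\ $\alpha\leq\tilde\alpha$.

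This last inequality is exactly the defining angle-comparison property of spaces whose curvature is bounded above by $0$ in the Alexandrov sense: in such spaces, the Alexandrov angle at any vertex of a triangle is dominated by the corresponding Euclidean comparison angle. This is the step where the assumption ${\rm curv}(\X)\leq 0$ is used; the lower bound $\kappa\leq{\rm curv}(\X)$ serves only to guarantee that the tangent cone $T_\mu\X$ and the Alexandrov angle used above are well-defined, so that the formalism recalled in Section~\ref{sub:curvature.bounded.below.defs} applies. The main obstacle in writing a fully self-contained argument is this angle comparison, which I would invoke as a standard fact of metric geometry (see e.g.\ \cite{alexander2023}) rather than reprove from scratch; once it is in hand, the chain of equalities above produces the inequality, and the claim on $k_\mu^y$ follows as noted.
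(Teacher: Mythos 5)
The paper does not actually prove this statement: it is imported verbatim from the cited reference (equation (2.4) there), so there is no in-paper argument to compare against. Your proof is the standard derivation and is essentially correct. The reduction of $k_\mu^y(x)\geq 1$ to the distance inequality, the handling of the degenerate case via \eqref{eq:logpdistance}, the two law-of-cosines identities, and the reduction to the angle comparison $\alpha\leq\tilde\alpha$ are all sound; the final step is indeed the defining angle-comparison property of spaces with curvature bounded above by $0$, and it is reasonable to cite it rather than reprove it (it is, e.g., Proposition II.1.7(4) in Bridson--Haefliger, or the corresponding statement in \cite{alexander2023}). Your reading of the role of the lower bound $\kappa\leq{\rm curv}(\X)$ --- guaranteeing that the tangent cone, the log map, and the Alexandrov angle are well defined so that the formalism applies --- is also the right one.

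One small point worth flagging: your identity $\|u-v\|_\mu^2 = d^2(\mu,x)+d^2(\mu,y)-2\,d(\mu,x)\,d(\mu,y)\cos\alpha$ relies on the standard polarization identity $\langle u,v\rangle_\mu=\tfrac12\bigl(\|u-o_\mu\|_\mu^2+\|v-o_\mu\|_\mu^2-\|u-v\|_\mu^2\bigr)$, whereas \eqref{eq:tangentip} as printed carries a spurious factor of $2$ in front of $\|u-v\|_\mu^2$. The paper itself uses the standard identity later (in the proof of \eqref{eq:huggingalmostconv}), so this is a typo in \eqref{eq:tangentip} rather than an error in your argument, but if you take \eqref{eq:tangentip} literally your displayed equation does not follow from it.
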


The next theorem is useful when dealing with Wasserstein spaces and manifolds of nonnegative sectional curvature. 
It relies on the concept of extendible geodesics  (see \cite{AhLePa20,LePaRiSt22}), introduced below.

\begin{definition}Given $\lambda_{\rm in},\lambda_{\rm out}\geq 0$, a constant-speed geodesic $\gamma:[0,1]\to \X$ is said to be $(\lambda_{\rm in},\lambda_{\rm out})$-extendible if there exists a constant speed geodesic ${\gamma}^+:[-\lambda_{\rm in},1+\lambda_{\rm out}]\to \X$ with $\gamma^+\mid_{[0,1]}=\gamma$.
\end{definition}

\begin{theorem}[Theorem 4.2 in \cite{LePaRiSt22}]\label{thm:hugging.nonnegative} Assume that ${\rm curv}(\X)\geq 0$ and let $\mu,x\in\X$. Given $\lambda_{\rm in},\lambda_{\rm out}>0$, assume that there exists a $(\lambda_{\rm in},\lambda_{\rm out})$-extendible constant-speed geodesic from $\mu$ to $x$. Then
\[\inf_{y\in\X} k_\mu^y(x)\geq \frac{\lambda_{\rm out}}{1+\lambda_{\rm out}} - \frac{1}{\lambda_{\rm in}}~.\]
\end{theorem}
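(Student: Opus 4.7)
Write $\gamma:[-\lambda_{\rm in},1+\lambda_{\rm out}]\to\X$ for the extended geodesic, $x^+:=\gamma(1+\lambda_{\rm out})$, $x^-:=\gamma(-\lambda_{\rm in})$, $a:=d(\mu,x)$, $b:=d(\mu,y)$, $\alpha:=\cos\sphericalangle_\mu(x,y)$ and $D_z:=\|\log_\mu z-\log_\mu y\|_\mu^2-d^2(z,y)$ for $z\in\X$. Under ${\rm curv}(\X)\geq 0$, Toponogov comparison provides two tools: (i) $D_z\geq 0$ for all $z$ (the tangent-cone distance dominates the true distance), and (ii) the concavity inequality
\[d^2(\sigma(t),y)\geq (1-t)\,d^2(\sigma(0),y)+t\,d^2(\sigma(1),y)-t(1-t)\,d^2(\sigma(0),\sigma(1))\]
along any constant-speed geodesic $\sigma:[0,1]\to\X$. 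Fix $y\in\X$ with $y\neq \mu$ and note that the claim $k_\mu^y(x)\geq\tfrac{\lambda_{\rm out}}{1+\lambda_{\rm out}}-\tfrac{1}{\lambda_{\rm in}}$ is equivalent to $D_x\leq\big(\tfrac{1}{1+\lambda_{\rm out}}+\tfrac{1}{\lambda_{\rm in}}\big) b^2$; this is the bound I will establish, and the theorem follows by taking the infimum over $y$.

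\textbf{Forward step.} Because $x^+$ lies on the $\mu$-emanating geodesic through $x$, one has $\log_\mu x^+=(1+\lambda_{\rm out})\log_\mu x$ in the tangent cone. A direct application of the cone law of cosines then yields the algebraic identity
\[\|\log_\mu x-\log_\mu y\|_\mu^2=\tfrac{1}{1+\lambda_{\rm out}}\|\log_\mu x^+-\log_\mu y\|_\mu^2+\tfrac{\lambda_{\rm out}}{1+\lambda_{\rm out}}\,b^2-\lambda_{\rm out}\,a^2,\]
while (ii) applied to the geodesic from $\mu$ to $x^+$ at the interior parameter $t=1/(1+\lambda_{\rm out})$, which is $x$, gives the same right-hand side with $\|\log_\mu\cdot-\log_\mu y\|_\mu^2$ replaced by $d^2(\cdot,y)$ and ``$\geq$'' in place of ``$=$''. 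Subtracting the two delivers the one-step reduction $D_x\leq D_{x^+}/(1+\lambda_{\rm out})$, and it then suffices to show $D_{x^+}\leq\big(1+\tfrac{1+\lambda_{\rm out}}{\lambda_{\rm in}}\big)b^2$.

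\textbf{Backward step.} The backward extension enters through two ingredients: the adjacent-angle inequality $\sphericalangle_\mu(x^-,y)+\sphericalangle_\mu(x,y)\geq\pi$ (forced because $\mu$ is interior to the geodesic from $x^-$ to $x$), which yields $\cos\sphericalangle_\mu(x^-,y)\leq -\alpha$ and hence via the cone law of cosines $\|\log_\mu x^--\log_\mu y\|_\mu^2\geq \lambda_{\rm in}^2 a^2+b^2+2\lambda_{\rm in}\,ab\,\alpha$; and (ii) applied to the full geodesic $x^-\to\mu\to x^+$ at $\mu$, namely
\[b^2\geq\tfrac{1+\lambda_{\rm out}}{L}\,d^2(x^-,y)+\tfrac{\lambda_{\rm in}}{L}\,d^2(x^+,y)-\lambda_{\rm in}(1+\lambda_{\rm out})\,a^2,\qquad L:=1+\lambda_{\rm in}+\lambda_{\rm out}.\]
Expanding $D_{x^+}=(1+\lambda_{\rm out})^2a^2+b^2-2(1+\lambda_{\rm out})ab\alpha-d^2(x^+,y)$ and combining this identity with the two displays above and with (i) applied to $z=x^-$, one arrives at the desired bound on $D_{x^+}$ after the $a^2$-contributions cancel. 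Chaining with the forward step yields $D_x\leq\big(\tfrac{1}{1+\lambda_{\rm out}}+\tfrac{1}{\lambda_{\rm in}}\big)b^2$, completing the proof.

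\textbf{Main obstacle.} The crux is the backward step. Neither (ii) nor (i) alone controls $D_{x^+}$ from above, since each yields information about $d^2(x^\pm,y)$ in the ``wrong'' direction for this purpose. The resolution is to use the angular information $\cos\sphericalangle_\mu(x^-,y)\leq -\alpha$ as a bridge: it converts a lower bound on $\|\log_\mu x^--\log_\mu y\|_\mu^2$ (and hence, via (i), an effective constraint on $d^2(x^-,y)$) into the piece of data missing from the concavity inequality along $x^-\to x^+$. Tracking how the $a^2$-terms cancel exactly between these bounds, and confirming that the $b^2$-coefficient ends up as the sharp $1+(1+\lambda_{\rm out})/\lambda_{\rm in}$, is the core algebraic calculation.
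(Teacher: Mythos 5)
The paper does not prove this statement; it is imported verbatim as Theorem~4.2 of \cite{LePaRiSt22}, so I am judging your argument on its own terms. Your reduction of the claim to $D_x\leq\bigl(\tfrac{1}{1+\lambda_{\rm out}}+\tfrac{1}{\lambda_{\rm in}}\bigr)b^2$ and your entire forward step are correct: the cone identity, the concavity inequality at $t=1/(1+\lambda_{\rm out})$, and the resulting contraction $D_x\leq D_{x^+}/(1+\lambda_{\rm out})$ all check out, so it indeed remains to prove $D_{x^+}\leq\bigl(1+\tfrac{1+\lambda_{\rm out}}{\lambda_{\rm in}}\bigr)b^2$.

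The backward step, however, has a genuine gap: every ingredient you invoke produces an inequality in the wrong direction, and no combination of them can yield an \emph{upper} bound on $D_{x^+}$ (equivalently a \emph{lower} bound on $d^2(x^+,y)$, or a lower bound on $\alpha$). Concretely: (a) the concavity inequality at $\mu$ along $x^-\to x^+$ lower-bounds $b^2$, hence upper-bounds $d^2(x^+,y)$; (b) the adjacent-angle inequality $\sphericalangle_\mu(x^-,y)+\sphericalangle_\mu(x,y)\geq\pi$ gives $\alpha\leq-\cos\sphericalangle_\mu(x^-,y)$, an upper bound on $\alpha$; (c) your ``bridge'' fails because $D_{x^-}\geq 0$ reads $d^2(x^-,y)\leq\|\log_\mu x^--\log_\mu y\|_\mu^2$, so a lower bound on the right-hand side imposes no constraint at all on $d^2(x^-,y)$. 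Indeed, if you carry out the substitution you describe, the $a^2$-terms do cancel but what emerges is $D_{x^+}\geq 0$, i.e.\ a restatement of (i). What is actually needed is the \emph{reverse} adjacent-angle relation $\alpha\geq-\cos\tilde\sphericalangle_\mu(x^-,y)$, where $\tilde\sphericalangle_\mu(x^-,y)$ is the Euclidean comparison angle of the triangle $(\mu,x^-,y)$; this is a nontrivial consequence of the lower curvature bound (it follows from the fact that adjacent Alexandrov angles in a CBB space sum to \emph{exactly} $\pi$, combined with $\sphericalangle_\mu(x^-,y)\geq\tilde\sphericalangle_\mu(x^-,y)$) and does not follow from the general ``$\geq\pi$'' inequality you cite. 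Granting it, with $A:=(1+\lambda_{\rm out})a$, $B:=\lambda_{\rm in}a$, $u:=d(x^+,y)$, $v:=d(x^-,y)$ one gets
\[
D_{x^+}\leq 2Ab\left(\cos\tilde\sphericalangle_\mu(x^+,y)+\cos\tilde\sphericalangle_\mu(x^-,y)\right)
=2b^2+\frac{B(A^2-u^2)+A(B^2-v^2)}{ab\,\lambda_{\rm in}(1+\lambda_{\rm out})}\cdot\frac{ab}{2}\cdot\frac{1}{1},
\]
and the correct elementary input is not the concavity inequality but the plain triangle inequality $u+v\geq A+B$, which forces $Bu^2+Av^2\geq AB(A+B)$ and hence $D_{x^+}\leq b^2\bigl(1+\tfrac{A}{B}\bigr)=b^2\bigl(1+\tfrac{1+\lambda_{\rm out}}{\lambda_{\rm in}}\bigr)$. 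So the architecture of your proof is salvageable, but the ``main obstacle'' you identify is resolved by the exact adjacent-angle theorem plus the triangle inequality, not by the tools you list.
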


Following \cite{LePaRiSt22}, it is instructive to visualize extendible geodesics for subsets $ \X\subset \mathbb{S}^{d-1}$ of the unit sphere.
Let $\mu,x\in \mathbb{S}^{d-1}$ be any two points. 
Then the geodesic between $\mu$ and $x$ is $(\lambda_{\rm in},\lambda_{\rm out})$-extendible, if $d(\mu,x)(1+\lambda_{\rm in} + \lambda_{\rm out})\leq \pi$; thus (by Theorem \ref{thm:hugging.nonnegative}) it holds that $\inf_{y\in\X} k_{\mu}^y(x)>0$ whenever $1+\lambda_{\rm in}+\lambda_{\rm out}\geq 4$.
The latter means that $\X$ needs to be contained in a spherical cap around $\mu$ of radius $\pi/4$. 

In Section \ref{sec:example.wasserstein}, we further detail how to exploit Theorem \ref{thm:hugging.nonnegative} in the context of Wasserstein spaces.

\subsection{Performance of trimmed Fr\'{e}chet mean estimators}\label{sub:frechetcurvature}

In this section, we establish the main performance bound for the trimmed-mean estimator of the Fréchet mean defined in Section \ref{sub:trimmedestimators}. We 
make the following assumptions:

\begin{itemize}
\item $(\X,d)$ is a favorable metric space in the sense of Definition \ref{def:favorable};
\item $X_1,\ldots,X_n$ are independent copies of a $\X$-valued random variable $X$ such that $\E\,d^2(X,x_0)<+\infty$ for some $x_0\in\X$, and $X_1^\epsilon,\ldots,X_n^\epsilon$ is an $\epsilon$-contamination of the sample $X_1,\ldots,X_n$;
\item $(\X,d)$ has curvature bounded from below in the sense of Section \ref{sub:curvature.bounded.below.defs};
\item letting $\mu$ denote a Fréchet mean of $X$, we assume a uniform lower bound on the hugging function of $(\X,d)$ at $\mu$:\[\inf_{x,y\in\X}k_\mu^y(x)\geq k_{\min}>0~.\]\end{itemize}

One of the main results of Le Gouic, Paris, Rigollet, and Stromme \cite[Theorem 3.3]{LePaRiSt22} is that, under the above conditions, $\mu$ is the unique Fr\'{e}chet mean of $X$ and the empirical Fréchet mean
\[\overline{\mu}_n\in \mathop{\rm argmin}_{b\in\X}\frac{1}{n}\sum_{i=1}^nd^2(X_i,b)\]
satisfies the bound
\begin{equation}\label{eq:expectationboundLePaRiSt}\E\,\big [d^2(\overline{\mu}_n,\mu)\big]\leq \frac{4\,\E\,d^2(X,\mu)}{k_{\min}^2n}~.\end{equation}

Our main general result establishes an upper bound for 
the performance of the trimmed Fr\'{e}chet mean estimators that holds in high probability, under weak assumptions.

To formulate the result, set 
\[ \nu_p:= \sup_{x\in\X \setminus\{\mu\}}  \, \frac{1}{d(\mu,x)} \left( \E |\langle\log_\mu(X),\log_\mu(x)\rangle_\mu|^p \right)^{1/p}  \]
for $p\geq 1$ and define $\sigma_{\rm w}:=\nu_2$ to be the \emph{local variance}.

\begin{theorem}
\label{thm:geodesic.main}
	Let $\delta\in(0,1)$ and assume that
	\[t=t(n,\epsilon,\delta):=\lfloor \epsilon n\rfloor +\lceil \ln(2/\delta)\rceil \vee \left\lceil \tfrac{\epsilon\wedge \left(\frac{1}{2} - \epsilon\right)}{2}\,n\right\rceil < \frac{n}{2}~.\]
	Then the estimator $\wh{\mu}=\wh{\mu}_{n,t}^\epsilon$ defined in Section \ref{sub:trimmedestimators} satisfies that, with probability at least $1-\delta$,
	\begin{align}
	\label{eq:est.d.main}
	 d\left(\wh{\mu},\mu\right)
	\leq \frac{C_\varepsilon}{ k_{\rm min}}  \left(  8\sqrt \frac{ \E[d^2(X,\mu)]}{n} + \sqrt \frac{ \sigma^2_{\rm w} \log(3/\delta) }{n}  + \inf_{p\in[1,\infty)}  \nu_p \epsilon^{1-1/p}\right)~,
	\end{align}
	where $C_\epsilon:=928 (1 + \frac{\epsilon}{\epsilon\wedge \left(\frac{1}{2}-\epsilon\right)})$.
\end{theorem}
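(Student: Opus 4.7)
The plan is to apply Lemma~\ref{lem:localization}. The key ingredient is a pointwise lower bound that linearizes the quadratic $d^2(b,\cdot)-d^2(\mu,\cdot)$ in the tangent cone at $\mu$. From $k_\mu^b(x)\geq k_{\min}$ and the definition \eqref{eq:huggingoriginal} one obtains
\[
\|\log_\mu x - \log_\mu b\|_\mu^2 \;\leq\; d^2(x,b) + (1-k_{\min})\,d^2(\mu,b),
\]
and combining this with the polarization identity \eqref{eq:tangentip} and \eqref{eq:logpdistance} gives
\[
d^2(b,x) - d^2(\mu,x) \;\geq\; k_{\min}\,d^2(\mu,b) - 2\,\langle \log_\mu x,\log_\mu b\rangle_\mu, \qquad x,b\in\X.
\]

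Next I would transfer this to the trimmed mean. Since $\wh T^\epsilon_{n,t}$ is monotone, translation invariant, antisymmetric (because we trim an equal number of terms from each end, so $T_{n,t}(-f)=-T_{n,t}(f)$), and homogeneous, the above yields
\[
\wh{T}^\epsilon_{n,t}\!\bigl(d^2(b,\cdot)-d^2(\mu,\cdot)\bigr) \;\geq\; k_{\min}\,d^2(\mu,b) - 2\,d(\mu,b)\,\wh{T}^\epsilon_{n,t}(f_b),
\]
where $f_b(x):=\langle \log_\mu x,\log_\mu b\rangle_\mu / d(\mu,b)$. Using separability of $\X$ to pass to a countable dense subclass, Theorem~\ref{thm:trimmed.mean} applied to $\mathcal{F}=\{f_b:b\neq\mu\}$ produces, with probability at least $1-\delta$, a uniform bound
\[
\sup_{b\neq\mu}\bigl|\wh{T}^\epsilon_{n,t}(f_b) - \E f_b(X)\bigr| \;\leq\; B \;:=\; C_\epsilon\!\left(8V_n + \sigma_{\rm w}\sqrt{\tfrac{\log(3/\delta)}{n}} + \inf_{p\in[1,\infty)}\nu_p\,\epsilon^{1-1/p}\right),
\]
with $\nu_p$ and $\sigma_{\rm w}=\nu_2$ coinciding with their definitions in the statement. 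The first-order optimality of $\mu$ along a constant-speed geodesic from $\mu$ to $b$ yields $\E f_b(X)\leq 0$, so $\wh{T}^\epsilon_{n,t}(f_b)\leq B$ uniformly, and therefore
\[
\wh{T}^\epsilon_{n,t}\!\bigl(d^2(b,\cdot)-d^2(\mu,\cdot)\bigr) \;\geq\; k_{\min}\,d^2(\mu,b) - 2\,d(\mu,b)\,B.
\]

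The right-hand side is a quadratic in $R:=d(\mu,b)$ with minimum $-B^2/k_{\min}$ attained at $R=B/k_{\min}$. Choosing $\gamma:=B^2/k_{\min}$ and $r:=(1+\sqrt 2)B/k_{\min}$, both hypotheses of Lemma~\ref{lem:localization} are verified (for $R>r$ the quadratic exceeds $\gamma$, and everywhere it is $\geq -\gamma$), so $d(\wh{\mu}^\epsilon_{n,t},\mu)\leq (1+\sqrt 2)B/k_{\min}$. Substituting $B$ and absorbing $(1+\sqrt 2)\cdot 384 \approx 928$ into $C_\epsilon$ gives \eqref{eq:est.d.main}.

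The main obstacle will be bounding $V_n=\E\sup_b|n^{-1}\sum_i f_b(X_i)-\E f_b(X)|$ by $\sqrt{\E d^2(X,\mu)/n}$ in the absence of a genuine linear structure on $T_\mu\X$. I would symmetrize and then exploit the Cauchy--Schwarz inequality \eqref{eq:CSinthecone} together with the identity \eqref{eq:tangentip}, expanding $\E\|\sum_i\epsilon_i\log_\mu X_i\|_\mu^2$ to get $\leq n\,\E d^2(X,\mu)$; combined with a standard Hilbertian embedding of the tangent cone this transfers to the desired Rademacher bound. Secondary technicalities---the measurability of $\wh{\mu}^\epsilon_{n,t}$ and the countability hypothesis in Theorem~\ref{thm:trimmed.mean}---are handled by Proposition~\ref{prop:estimator.measurable} and separability of $\X$, respectively.
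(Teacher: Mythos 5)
Your proposal follows essentially the same route as the paper's proof: the same strong-convexity bound derived from the hugging function, the same localization via Lemma~\ref{lem:localization} with $r=(1+\sqrt{2})B/k_{\min}$ and $\gamma=B^2/k_{\min}$, and the same application of Theorem~\ref{thm:trimmed.mean} to the normalized inner-product class $\{f_b\}$, with the same constant bookkeeping $384(1+\sqrt 2)\le 928$. The one substantive deviation is your treatment of $V_n$: the paper does not symmetrize, but uses the Pettis-integral identity of Proposition~\ref{prop:stolenfromLePaRiSt} to get $V_n\le \E\big\|\tfrac{1}{n}\sum_{i}\log_\mu(X_i)-o_\mu\big\|_\mu$ and then Jensen plus additivity of variances in the Hilbert space $L_\mu\X$ to obtain $V_n\le\sqrt{\E\, d^2(X,\mu)/n}$ exactly; your symmetrization step would cost an extra factor of $2$ and spoil the constant $8$ in \eqref{eq:est.d.main}, and the ``Hilbertian embedding'' you gesture at is precisely the content of that proposition, which also upgrades your $\E f_b(X)\le 0$ to the equality $\E f_b(X)=0$ needed so that the centered $\nu_p$ of Theorem~\ref{thm:trimmed.mean} coincide with the uncentered $\nu_p$ in the statement.
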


The proof of the theorem may be found in Section \ref{proof:geodesic.main}.
Several comments are in order.

\begin{remark}[\emph{Breakdown point}]
\label{rem:breakdown.point.geodesic}
Theorem \ref{thm:geodesic.main} in particular establishes that the so-called breakdown point of our estimator occurs at a corruption level of  $\varepsilon \to \frac{1}{2}$.
Furthermore, for smaller values of $\varepsilon$---for example, when $\varepsilon \leq \frac{1}{4}$---the required conditions simplify to $\lceil \ln(2/\delta) \rceil \leq \frac{n}{4}$, which is satisfied whenever $\delta \geq 2\exp(-n/5)$ and $n \geq 12$. 
Moreover, we also have that $C_\varepsilon \leq 1160$ for $\varepsilon\leq \frac{1}{4}$.
\end{remark}

\begin{remark}[\emph{Weaker condition on hugging function}]
	In Theorem \ref{thm:geodesic.main} it suffices to assume that $\inf_{y\in\X} k_\mu^y(X_i^\epsilon)\geq k_{\min}$ for all $i=1,\ldots,n$.
	In particular, if the sample is uncontaminated (i.e., $\epsilon=0$), it suffices to assume that $ \inf_{y\in\X} k_\mu^y(X)\geq k_{\min}$ almost surely.
\end{remark}

\begin{remark}[\emph{Statistical optimality of Theorem \ref{thm:geodesic.main}}]
	We first note that since $\sigma_{\rm w}^2= \nu_2^2$ by definition, clearly
	\[ \inf_{p\in[1,\infty)} \epsilon^{1-1/p} \nu_p
	\leq \sqrt{ \epsilon }  \cdot \sigma_{\rm w}\]
	and thus the estimate \eqref{eq:est.d.main} in Theorem \ref{thm:geodesic.main} implies that, with probability at least $1-\delta$,
	\begin{align}
	\label{eq:bary.v.simplified}
	d(\wh{\mu},\mu)
	\leq \frac{C_\epsilon}{ k_{\rm min}}  \left(  8  \sqrt \frac{ \E[d^2(X,\mu)]}{n} + \sqrt \frac{ \sigma^2_{\rm w} \log(3/\delta) }{n}   +   \sqrt{ \epsilon }  \cdot \sigma_{\rm w} \right)~.
	\end{align}
	In the case when $\X$ is a separable Hilbert space and $d$ is the Hilbert-norm, this estimate coincides with the known and optimal bounds for mean-estimation (from corrupted data) in Hilbert spaces, see, e.g., \cite{LuMe20} and \eqref{eq:trace.cov} below.
	In particular, the dependence on  the corruption level $\varepsilon$ in \eqref{eq:bary.v.simplified} is known to be optimal in that setting.
	 At the same time, once the distribution of $X$ has lighter tails, the  estimate on $\inf_{p\in[1,\infty)} \epsilon^{1-1/p} \nu_p$ and thus also on \eqref{eq:bary.v.simplified} can be refined.
	For instance, if $X$ is $L$-sub-Gaussian (in the sense described in Definition \ref{def:subgaussian.geodesic} below), then a standard computation shows that 
	 \begin{align*}
	 \inf_{p\in[1,\infty)} \epsilon^{1-1/p} \nu_p
	 \leq C L  \varepsilon\sqrt{\log\left(\frac{3}{\varepsilon}\right)} \cdot \sigma_{\rm w}~,
	 \end{align*}
	 where $C$ is a universal constant.
	 Thus one may improve  $\sqrt{\varepsilon}$ in \eqref{eq:bary.v.simplified} to $CL \varepsilon\sqrt{\log(3/\varepsilon)}$.
	 Again, the resulting estimate is optimal in the class of $L$-sub-Gaussian distributions.
\end{remark}

\begin{remark}[\emph{On the two variances appearing in Theorem \ref{thm:geodesic.main}}]
It is important to highlight the difference between the two variances appearing in the estimate in Theorem \ref{thm:geodesic.main}: the ``global variance'' $\E d^2(X,\mu)$ and the---typically much smaller--- ``local variance'' $\sigma_{\rm w}^2$.
Indeed, for clarity, consider the case where $T_\mu\X$ is a proper Hilbert space (e.g., when $\X$ is a Riemannian manifold). Then $\log_\mu(X)$ is a random vector in the Hilbert space $T_\mu\X$, and we can define its covariance matrix 
\[\Sigma := \cov[\log_\mu(X)]~.\]
By definition, $\sigma_{\rm w}^2 = \lambda_{\max}(\Sigma)$, while a direct computation using~\eqref{eq:logpdistance} yields $\E d^2(X,\mu) = \operatorname{trace}(\Sigma)$.  
Thus, $\sigma_{\rm w}^2$ is \emph{dimension-independent} and it is always bounded above by $\E d^2(X,\mu)$, which can be as large as $m \cdot \sigma_{\rm w}^2$, where $m$ is the dimension of $T_\mu\X$.
In this setting, Theorem \ref{thm:geodesic.main} implies that, with probability at least $1-\delta$,
\begin{align}
\label{eq:trace.cov}
d(\wh{\mu},\mu)
	\leq \frac{C_\epsilon}{ k_{\rm min}}  \left(  8  \sqrt \frac{ {\rm trace}(\Sigma)}{n} + \sqrt \frac{ \lambda_{\max}(\Sigma)\log(3/\delta) }{n}   +   \sqrt{ \epsilon }  \cdot \sqrt{\lambda_{\max}(\Sigma)} \right)~.
\end{align}
\end{remark}



\subsection{Behaviour of the empirical Fréchet mean under sub-Gaussian tails}
While the primary focus of this paper is on developing estimators that are robust to corrupted and heavy-tailed data, our techniques also yield improved estimates for the standard empirical Fréchet mean 
\[ \overline{\mu}_n \in \argmin_{b\in\X}  \frac{1}{n}\sum_{i=1}^n d^2( X_i,b) \]
in the case of light-tailed data.

\begin{definition}
\label{def:subgaussian.geodesic}
A random vector $X$ with Fréchet mean $\mu$ is  $L$-\emph{sub-Gaussian} if for all $b\in\X$ and all $u\geq 0$,
	\[ \mathbb{P}\left( |\langle \log_{\mu}(X) , \log_\mu(b)  \rangle_{\mu}| \geq u  \| \langle \log_{\mu}(X) , \log_\mu(b)  \rangle_{\mu}\|_{L_2} \right)
	\leq 2\exp\left( - \frac{u^2}{2 L^2} \right)~.\]
\end{definition}

Let us emphasize that this definition coincides with the standard definition when $(\X,d)$ is a Hilbert space.
In particular, in that setting, any Gaussian random vector is $L$-sub-Gaussian with constant $L=1$.

We recall that $(X_i)_{i=1}^n$ is an i.i.d.\ sample of $X$.
Moreover, $(\X,d)$ is favorable and $\E \, d^2(X,x_0)<\infty$ for some $x_0\in\X$.

\begin{theorem}
\label{thm:geodesic.subgaussian}
	Let $\delta\in(0,1)$ satisfy that $\delta\geq \exp(-n)$ and suppose that $\inf_{y\in\X}k_\mu^y(X)\geq k_{\min}$ almost surely.
	Then $\mu$ is unique, and with probability at least $1-\delta$,
\[ d\left( \overline{\mu}_n ,\mu \right)
\leq \frac{L C}{k_{\min}}  \left( \sqrt  \frac{ \E \, d^2(X,\mu) }{  n} +  \sqrt{ \frac{ \sigma^2_{\rm w}  \log(3/\delta)}{ n}}  \right)~,  \]
	where $C$ is an absolute constant.
\end{theorem}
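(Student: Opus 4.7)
The proof follows the same overall strategy as Theorem~\ref{thm:geodesic.main}, but since we are now in the un-contaminated and un-trimmed regime ($\epsilon = 0$, $t = 0$), the trimmed-mean machinery of Theorem~\ref{thm:trimmed.mean} is replaced by direct sub-Gaussian concentration for the empirical Fréchet mean. The starting point is the quadratic lower bound that drives the proof of Theorem~\ref{thm:geodesic.main}: combining the hugging-function hypothesis $k_\mu^y(x) \geq k_{\min}$ with the identity
\[
 \|\log_\mu x - \log_\mu b\|_\mu^2 \;=\; d^2(x,\mu) + d^2(b,\mu) - 2\langle \log_\mu x, \log_\mu b\rangle_\mu
\]
and \eqref{eq:huggingoriginal} gives, for all $x, b \in \X$,
\[
 d^2(x, b) - d^2(x, \mu) \;\geq\; k_{\min}\, d^2(b, \mu) - 2\langle \log_\mu x, \log_\mu b\rangle_\mu.
\]
Averaging over the sample, using that $\ol{\mu}_n$ minimizes $b \mapsto \frac{1}{n}\sum_i d^2(X_i, b)$, specializing to $b = \ol{\mu}_n$, and dividing by $d(\ol{\mu}_n, \mu)$ (which is justified by the homogeneity of $\langle \cdot,\cdot\rangle_\mu$ in its second argument) yields
\[
 k_{\min}\, d(\ol{\mu}_n, \mu) \;\leq\; 2\, W_n, \qquad W_n \;:=\; \sup_{v \in S_1}\, \frac{1}{n}\sum_{i=1}^n \langle \log_\mu X_i, v\rangle_\mu,
\]
where $S_1 := \{\log_\mu b / d(b, \mu) : b \in \X \setminus \{\mu\}\}$ is the set of ``unit directions'' in the tangent cone $T_\mu \X$.

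The task thus reduces to controlling $W_n$ with high probability. I first note the first-order optimality of $\mu$: applying the pointwise inequality above with $b = \gamma(t)$ along a constant-speed geodesic from $\mu$ with initial direction $v \in S_1$, taking expectation, dividing by $t$, and letting $t \to 0^+$ (using that $\mu$ is the population minimizer) yields $\E\langle \log_\mu X, v\rangle_\mu \leq 0$ for every $v \in S_1$, so that
\[
 W_n \;\leq\; \sup_{v \in S_1}\, \frac{1}{n}\sum_{i=1}^n \big(\langle \log_\mu X_i, v\rangle_\mu - \E\langle\log_\mu X, v\rangle_\mu\big).
\]
For the expectation of this centered supremum I follow the symmetrization argument that underlies \eqref{eq:expectationboundLePaRiSt}, combined with the Cauchy--Schwarz inequality \eqref{eq:CSinthecone}, which gives $|\langle \log_\mu x, v\rangle_\mu| \leq d(x, \mu)$ for $v \in S_1$, to obtain $\E W_n \leq C\sqrt{\E\, d^2(X, \mu)/n}$. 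For the deviation of $W_n$ above its mean, Definition~\ref{def:subgaussian.geodesic} ensures that each centered increment $\langle \log_\mu X, v\rangle_\mu - \E\langle\log_\mu X, v\rangle_\mu$ is sub-Gaussian with proxy at most $L\, \sigma_{\rm w}$, uniformly in $v \in S_1$. A Talagrand-type concentration inequality for suprema of sub-Gaussian empirical processes then gives, with probability at least $1 - \delta$,
\[
 W_n \;\leq\; C_1\, \E W_n \;+\; C_2\, L\, \sigma_{\rm w}\, \sqrt{\frac{\log(3/\delta)}{n}}.
\]
Combining with $k_{\min}\, d(\ol{\mu}_n, \mu) \leq 2 W_n$ and dividing by $k_{\min}$ yields the claimed bound; the uniqueness of $\mu$ (needed to invoke the first-order argument) is exactly the content of \cite[Theorem~3.3]{LePaRiSt22} under the hugging-function hypothesis.

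The main obstacle I anticipate is the Talagrand-type concentration step for $W_n$. Since $T_\mu \X$ is not a vector space, vector-valued Bernstein inequalities do not apply directly, and one cannot simply rewrite $W_n$ as the norm of a random vector. Nevertheless, $W_n$ is a supremum of i.i.d.\ sub-Gaussian averages indexed by $v \in S_1$, and the natural $L_2$-pseudometric on $S_1$ is controlled by the inner product $\langle \cdot,\cdot\rangle_\mu$ on $T_\mu \X$; the generic-chaining machinery (or a bounded-differences argument applied after a suitable truncation of $d(X_i, \mu)$, whose tails are again sub-Gaussian by Definition~\ref{def:subgaussian.geodesic}) then delivers the required $\sqrt{\log(3/\delta)/n}$ deviation term.
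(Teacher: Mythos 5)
Your reduction to the empirical process is sound and matches the paper's strategy: the strong-convexity inequality \eqref{eq:huggingalmostconv} plus minimality of $\ol{\mu}_n$ does give $k_{\min}\, d(\ol{\mu}_n,\mu)\leq 2W_n$ (the paper routes this through Lemma \ref{lem:bound.by.calE} with $\wh{T}^0_{n,0}$, obtaining the constant $(\sqrt2+1)/k_{\min}$, but your direct specialization to $b=\ol{\mu}_n$ is equally valid), and uniqueness and the first-order condition are as you say. The genuine gap is in the concentration step for $W_n$, which you correctly identify as the crux but leave unresolved --- and, more importantly, the obstacle you name is exactly backwards. You write that ``one cannot simply rewrite $W_n$ as the norm of a random vector'' because $T_\mu\X$ is not a vector space. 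But Proposition \ref{prop:stolenfromLePaRiSt} (a repackaging of Theorem 2.4 of \cite{LePaRiSt22}) supplies precisely this: there is a Hilbert subspace $L_\mu\X\subset T_\mu\X$ containing $\log_\mu(\mathrm{supp}\,\mathbb{P})$, on which $\langle\cdot,\cdot\rangle_\mu$ is a genuine inner product and the Pettis-integral identity \eqref{eq:Pettisuse} holds for the empirical measure. Hence
\[
W_n\;\leq\;\Bigl\|\tfrac{1}{n}\textstyle\sum_{i=1}^n\log_\mu(X_i)-o_\mu\Bigr\|_\mu,
\]
and the problem becomes standard concentration for the norm of a sum of i.i.d.\ mean-zero sub-Gaussian vectors in a Hilbert space, for which chaining plus the majorizing measures theorem yields the two-term bound with $\E\|Z\|_\mu/\sqrt{n}\leq\sqrt{\E d^2(X,\mu)/n}$ and $L\sigma_{\rm w}\sqrt{\log(1/\delta)/n}$. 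This is the paper's proof.

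Without that linearization, neither of your two sub-steps closes. Your bound $\E W_n\leq C\sqrt{\E d^2(X,\mu)/n}$ ``by symmetrization and Cauchy--Schwarz'' implicitly uses linearity of $v\mapsto\langle\cdot,v\rangle_\mu$ in the first argument to pull the sum inside the inner product; using only the pointwise bound $|\langle\log_\mu X_i,v\rangle_\mu|\leq d(X_i,\mu)$ from \eqref{eq:CSinthecone} gives $\E W_n\leq\E\, d(X,\mu)$, with no decay in $n$. Likewise, a bounded-differences argument after truncation controls deviations in terms of the envelope $d(X_i,\mu)$, i.e.\ the global variance, not $\sigma_{\rm w}$, and a generic-chaining argument directly over $S_1$ needs the increments $\langle\log_\mu X,u\rangle_\mu-\langle\log_\mu X,v\rangle_\mu$ to be sub-Gaussian with respect to a metric on $S_1$ --- which again is exactly the statement that the process is a linear functional of a sub-Gaussian vector in $L_\mu\X$. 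So the missing ingredient is not a new concentration inequality but the structural fact that the tangent cone, restricted to the support of the law of $\log_\mu X$, is a Hilbert space; once you invoke Proposition \ref{prop:stolenfromLePaRiSt}, your outline becomes the paper's proof.
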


The theorem is proved in Section \ref{proof:geodesic.subgaussian}.
	
	It is important to underscore the distinction between Theorem \ref{thm:geodesic.subgaussian} and the corresponding result in \cite{LePaRiSt22}, see Theorem 3.5 therein.
	In \cite{LePaRiSt22} a random vector $X$ with Fréchet mean $\mu$ is called  sub-Gaussian with ``variance proxy'' $\zeta>0$ if 
\begin{align}
\label{eq:subgauss.JEMS}
\E \exp\left(\frac{d^2(\mu,X)}{2\zeta^2}\right) \leq 2~. 
\end{align}
It is shown that for a random vector $X$ that satisfies  \eqref{eq:subgauss.JEMS}, with probability $1-\delta-\exp(-c_1n)$,
\begin{align}
\label{eq:tail.bound.JEMS}
    d(\overline{\mu}_n,\mu)
\leq c_2\zeta  \sqrt{\frac{\log(3/\delta)}{n}}~ .
\end{align}
Here $c_1$ and  $c_2$ are constants that depend on the hugging function and the lower bound on the curvature of the space.\footnote{
At the end of the proof of Theorem 3.5 in \cite{LePaRiSt22} there is a typo and their result states that  \eqref{eq:tail.bound.JEMS} is true when $c_2\zeta $ is replaced by $\frac{1}{c_2\zeta}$.}
Crucially, the definition given in \eqref{eq:subgauss.JEMS} is  \emph{dimension-dependent} and the resulting estimate in \eqref{eq:tail.bound.JEMS} may be significantly worse than that obtained in Theorem \ref{thm:geodesic.subgaussian}.
As a concrete example, consider $(\X,d)=(\R^m,\|\cdot\|_2)$ and let $X$ be a standard Gaussian vector.
Then $X$ is $L$-sub-Gaussian with constant $L=1$ in the sense of our definition.
On the other hand, by Jensen's inequality, clearly  $\zeta^2\geq \frac{1}{\ln(4)} \E d^2(X,\mu) = \frac{1}{\ln(4)}m $.
In particular, \eqref{eq:tail.bound.JEMS} implies that with probability at least $1-\delta$,
\[ d(\overline{\mu}_n,\mu) \leq C \sqrt{ \frac{m\log(3/\delta)}{n} }~, \]
whereas Theorem \ref{thm:geodesic.subgaussian} shows that with probability at least $1-\delta$,
\[ d(\overline{\mu}_n,\mu) \leq C\left(  \sqrt{ \frac{m}{n} }  + \sqrt{ \frac{\log(3/\delta)}{n} }  \right)~, \]
which is a substantial improvement for small  values of $\delta$.

Let us also mention that in the recent work of Brunel and Serres \cite{brunel2025finite}, high-probability bounds for the empirical barycenter of i.i.d.\ data are established under certain sub-Gaussian assumptions, in two settings.
First, when dealing with general geodesic spaces, they introduce a sub-Gaussian condition that is different from ours. 
In particular, it appears  to be stronger and dimension-dependent.
For example, they show that if  $(\mathcal{X}, d) = (\mathbb{R}^m, \|\cdot\|_2)$  and $X$ is a vector with independent $L$-sub-Gaussian coordinates (in the classical,  one-dimensional sense), then $X$  is  $mL$-sub-Gaussian in their sense; see \cite[Remark 1]{brunel2025finite}.
In contrast, under our definition, the same vector can be readily verified to be $L$-sub-Gaussian.
Next, when specialized to Riemannian manifolds (with curvature bounded from above), the sub-Gaussian condition considered in \cite{brunel2025finite} aligns with ours. However, their resulting estimation bounds are strictly weaker---either due to a non-optimal prefactor in front of the $\E d^2(X, \mu)$  term, or due to a dimension-dependent alternative to $\sigma_{\rm w}^2$ (essentially, if  $(\mathcal{X}, d) = (\mathbb{R}^m, \|\cdot\|_2)$ and $X$ is the standard Gaussian, their local variance scales linearly with $m$ while ours is constant); see \cite[Theorems 11, 12]{brunel2025finite}.
    
\subsection{Proof of Theorem \ref{thm:geodesic.main}}\label{proof:geodesic.main}

To prove Theorem \ref{thm:geodesic.main}, we first collect some preliminary results. The first one is a strong-convexity-type estimate based on the hugging function. 

\begin{proposition}
The following estimate holds for all $x,y\in\X$:
\begin{equation}\label{eq:huggingalmostconv}d^2(x,y)-d^2(x,\mu) \geq k_{\min}\,d^2(\mu,y)- 2 \langle\log_\mu x ,\log_\mu y\rangle_\mu~.
\end{equation}
\end{proposition}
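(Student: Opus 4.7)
The plan is to derive the inequality as a direct algebraic consequence of two definitional identities, with the hugging-function hypothesis entering only at the very last step via monotonicity. First I would dispose of the degenerate case $y=\mu$: both sides of \eqref{eq:huggingalmostconv} then vanish (since $\log_\mu\mu=o_\mu$, so that $\langle\log_\mu x,\log_\mu\mu\rangle_\mu=0$, and $d(\mu,y)=0$), so the inequality is trivial. For the remainder of the argument I assume $y\neq\mu$, so that $k_\mu^y(x)$ is well defined.

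Next, solving the defining equation \eqref{eq:huggingoriginal} of the hugging function for $d^2(x,y)$ gives the identity
\[ d^2(x,y) \;=\; \|\log_\mu x - \log_\mu y\|_\mu^2 \;-\; \bigl(1-k_\mu^y(x)\bigr)\,d^2(\mu,y). \]
In parallel, the polarization formula \eqref{eq:tangentip} for the tangent-cone inner product, together with the defining property \eqref{eq:logpdistance} of the log map (which identifies $\|\log_\mu x - o_\mu\|_\mu$ with $d(x,\mu)$, and similarly at $y$), yields
\[ \|\log_\mu x - \log_\mu y\|_\mu^2 \;=\; d^2(x,\mu) + d^2(y,\mu) \;-\; 2\,\langle \log_\mu x,\log_\mu y\rangle_\mu. \]
Substituting the second display into the first and cancelling the two contributions of $d^2(y,\mu)$ produces the exact equality
\[ d^2(x,y) - d^2(x,\mu) \;=\; k_\mu^y(x)\,d^2(\mu,y) \;-\; 2\,\langle \log_\mu x,\log_\mu y\rangle_\mu. \]

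Finally, since $d^2(\mu,y)\geq 0$, the uniform lower bound $k_\mu^y(x)\geq k_{\min}$ forces $k_\mu^y(x)\,d^2(\mu,y)\geq k_{\min}\,d^2(\mu,y)$, which is precisely \eqref{eq:huggingalmostconv}. There is no substantive obstacle here: once the two identities are unpacked, the result is purely algebraic, and the hugging-function hypothesis contributes only through the final monotonicity step. The only point requiring a moment of care is the boundary case $y=\mu$, where $k_\mu^y(x)$ is not defined but both sides of the inequality collapse to zero.
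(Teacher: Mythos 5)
Your proof is correct and follows essentially the same route as the paper: combine the polarization identity for $\langle\cdot,\cdot\rangle_\mu$ with \eqref{eq:logpdistance} to rewrite $\|\log_\mu x-\log_\mu y\|_\mu^2$, plug this into the definition \eqref{eq:huggingoriginal} to get the exact identity $d^2(x,y)-d^2(x,\mu)=k_\mu^y(x)\,d^2(\mu,y)-2\langle\log_\mu x,\log_\mu y\rangle_\mu$, and then apply $k_\mu^y(x)\geq k_{\min}$. Your explicit check of the boundary case $y=\mu$ is a minor (and welcome) addition to what the paper states only implicitly.
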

\begin{proof}
It suffices to consider $y\neq \mu$. Recalling the definition of $\langle\cdot,\cdot\rangle_\mu$ in \eqref{eq:tangentip} and the identity \eqref{eq:logpdistance}, we obtain
\[\|\log_\mu x - \log_\mu y\|_\mu^2 = d^2(x,\mu) + d^2(\mu,y) - 2 \langle\log_\mu x ,\log_\mu y\rangle_\mu~.\]
Combining this with the definition of the hugging function \eqref{eq:huggingoriginal}, we obtain that, for $x,y\in\X$ with $y\neq \mu$, 
\begin{equation*}k_{\mu}^y(x) = \frac{d^2(x,y)-d^2(x,\mu) + 2 \langle\log_\mu x ,\log_\mu y\rangle_\mu}{d^2(\mu,y)}~,
\end{equation*}
or equivalently,
\begin{equation*}d^2(x,y)-d^2(x,\mu)  = k_{\mu}^y(x)\,d^2(\mu,y)- 2 \langle\log_\mu x ,\log_\mu y\rangle_\mu~.\end{equation*}
By assumption, $k_{\mu}^y(x)\geq k_{\min}$, and this clearly implies \eqref{eq:huggingalmostconv} for $y\neq \mu$.\end{proof}

The next result compiles several results from \cite{LePaRiSt22} on the ``first order calculus''~of the function $\E\,d^2(X,\cdot)$ around its minimizer. 

\begin{proposition}
\label{prop:stolenfromLePaRiSt} Let $\mathbb{P}$ denote the probability distribution of $X$. Then there exists a subset $L_\mu\X\subset T_\mu\X$ containing $\log_\mu({\rm supp(\mathbb{P})})$\footnote{Here ${\rm supp}(\mathbb{P})$  denotes the support of $\mathbb{P}$, i.e., the smallest closed set of full measure. } which is a Hilbert space with zero element $o_\mu$, inner product given by the restriction of $\langle\cdot,\cdot\rangle_\mu$, and norm given by $\|h\|_\mu = \|h-o_\mu\|_\mu$. For any probability measure $\mathbb{Q}$ with ${\rm supp}(\mathbb{Q})\subset {\rm supp}(\mathbb{P})$, 
\begin{equation}\label{eq:Pettisuse}\forall b\in\X\,:\,\int_{\X}\langle \log_\mu x,\log_\mu b\rangle_\mu\,\mathbb{Q}(dx)=\left\langle\int_{\X}\log_\mu x\,\mathbb{Q}(dx),\log_\mu b\right\rangle_\mu,\end{equation}
where $\int_{\X}\log_\mu x\,\mathbb{Q}(dx)$ -- the Pettis integral of $(\log_\mu)_{\#}\mathbb{Q}$ -- is the unique element of $L_\mu\X$ that satisfies
\[\forall h\in L_\mu\X\,:\,\left\langle\int_{\X}\log_\mu x\,\mathbb{Q}(dx),h\right\rangle_\mu = \int_{\X}\langle \log_\mu x,h\rangle_\mu\,\mathbb{Q}(dx)~.\]
Moreover, we have the ``first order optimality condition''
\begin{equation}\label{eq:Pzeromean}\forall b\in\X\,:\,\E\,\langle \log_\mu X,\log_\mu b\rangle_\mu = \int_\X\,\langle \log_\mu x,\log_\mu b\rangle_\mu\,\mathbb{P}(dx) = 0~.\end{equation}
\end{proposition}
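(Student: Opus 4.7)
The plan is to follow the construction of Le Gouic, Paris, Rigollet, and Stromme~\cite{LePaRiSt22}, where all three ingredients (the Hilbert subspace, the Pettis integral, and the first-order optimality condition) are developed in the context of Alexandrov spaces with curvature bounded from below. Since these are essentially citations, the role of the proof is to assemble the right statements from~\cite{LePaRiSt22} and verify that the hypotheses of the present paper meet their assumptions.

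First, I would identify $L_\mu\X$ as the closed linear span (within the tangent cone $T_\mu\X$) of $\log_\mu(\supp(\mathbb{P}))$, with the operations inherited from $T_\mu\X$ and the ``inner product'' $\langle\cdot,\cdot\rangle_\mu$ of~\eqref{eq:tangentip}. The key point is that on this subspace the form $\langle\cdot,\cdot\rangle_\mu$ is genuinely bilinear and positive semi-definite. In general this fails on the whole tangent cone, but the rigidity forced by the Fréchet-mean optimality of $\mu$ constrains the Alexandrov angles between elements of $\log_\mu(\supp(\mathbb{P}))$ in exactly the way required for bilinearity. This is the content of the construction in Section~2 of~\cite{LePaRiSt22}. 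Completeness of $L_\mu\X$ then comes from closing the linear span under the resulting norm, and the identity $\|h\|_\mu=\|h-o_\mu\|_\mu$ is immediate from~\eqref{eq:logpdistance} applied to $h=\log_\mu x$.

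Second, for the Pettis-integral identity~\eqref{eq:Pettisuse}, once $L_\mu\X$ is recognised as a Hilbert space, the argument is standard. Since $\supp(\mathbb{Q})\subset\supp(\mathbb{P})$ and $\E\,d^2(X,\mu)<\infty$, the identity~\eqref{eq:logpdistance} gives
\[
\int_{\X}\|\log_\mu x\|_\mu^{2}\,\mathbb{Q}(dx)<\infty,
\]
so by~\eqref{eq:CSinthecone} the linear functional $h\mapsto\int_\X\langle\log_\mu x,h\rangle_\mu\,\mathbb{Q}(dx)$ is bounded on $L_\mu\X$. Riesz representation produces the unique element of $L_\mu\X$ satisfying the defining property of the Pettis integral, and~\eqref{eq:Pettisuse} is just that property evaluated at $h=\log_\mu b$.

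Third, for the first-order optimality condition~\eqref{eq:Pzeromean}, fix $b\in\X$ and let $\gamma:[0,1]\to\X$ be a constant-speed geodesic from $\mu$ to $b$. Using the hugging inequality~\eqref{eq:huggingalmostconv} pointwise in $X$, with $y=\gamma(s)$, together with $d(\mu,\gamma(s))=s\,d(\mu,b)$ and bilinearity of $\langle\cdot,\cdot\rangle_\mu$ on $L_\mu\X$, one obtains
\[
\E\,d^2(X,\gamma(s))-\E\,d^2(X,\mu)\geq k_{\min}\,s^{2}d^{2}(\mu,b)-2s\,\E\,\langle\log_\mu X,\log_\mu b\rangle_\mu.
\]
Since $\mu$ minimises $\E\,d^2(X,\cdot)$, the left-hand side is nonnegative; dividing by $s>0$ and letting $s\downarrow 0$ gives $\E\,\langle\log_\mu X,\log_\mu b\rangle_\mu\leq 0$. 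The reverse inequality follows by the same ``first variation'' calculation applied to a geodesic leaving $\mu$ in an opposite direction, exactly as in the proof of Theorem~3.3 in~\cite{LePaRiSt22}, yielding the claimed equality.

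The main obstacle is the first step: constructing $L_\mu\X$ and certifying bilinearity of $\langle\cdot,\cdot\rangle_\mu$ on it. The tangent cone of a general space with curvature bounded from below is not a vector space, and the polarised expression in~\eqref{eq:tangentip} is only bilinear after one restricts to the correct subspace and exploits the Fréchet-mean property of $\mu$; the rest of the proposition is then relatively routine Hilbert-space theory plus a standard first-variation computation.
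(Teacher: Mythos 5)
Your first two steps match the paper's route: the paper simply cites \cite[Theorem 2.4]{LePaRiSt22} for the construction of $L_\mu\X$ and for the Pettis-integral identity (item 4 of that theorem), exactly as you do. The gap is in your third step, the first-order optimality condition \eqref{eq:Pzeromean}. The hugging inequality \eqref{eq:huggingalmostconv} is a \emph{lower} bound: with $y=\gamma(s)$ it gives
\[
\E\,d^2(X,\gamma(s))-\E\,d^2(X,\mu)\;\geq\; k_{\min}s^2d^2(\mu,b)-2s\,\E\langle\log_\mu X,\log_\mu b\rangle_\mu~,
\]
and minimality of $\mu$ gives another lower bound, $\E\,d^2(X,\gamma(s))-\E\,d^2(X,\mu)\geq 0$. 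Two lower bounds on the same quantity tell you nothing about the sign of $\E\langle\log_\mu X,\log_\mu b\rangle_\mu$; your deduction that it is $\leq 0$ is a non sequitur. What the first-variation argument actually requires is the \emph{upper} bound $d^2(x,\gamma(s))\leq d^2(x,\mu)-2s\langle\log_\mu x,\log_\mu b\rangle_\mu+o(s)$ (the first variation inequality in spaces with curvature bounded below), which is a separate ingredient not derivable from \eqref{eq:huggingalmostconv}.

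Your fallback for the reverse inequality --- running the same computation along ``a geodesic leaving $\mu$ in an opposite direction'' --- is also not available in general: geodesics in an Alexandrov space need not be extendible past $\mu$, and even when they are, the claim $\langle\log_\mu x,\log_\mu b'\rangle_\mu=-\langle\log_\mu x,\log_\mu b\rangle_\mu$ needs justification. The paper avoids both problems by instead citing item 2 of \cite[Theorem 2.4]{LePaRiSt22}, namely $\int_\X\int_\X\langle\log_\mu x,\log_\mu y\rangle_\mu\,\mathbb{P}(dx)\,\mathbb{P}(dy)=0$. Via the Pettis integral this says $\|m\|_\mu^2=0$ for $m=\int_\X\log_\mu x\,\mathbb{P}(dx)$, hence $m=o_\mu$, and then $\int_\X\langle\log_\mu x,\log_\mu b\rangle_\mu\,\mathbb{P}(dx)=\langle m,\log_\mu b\rangle_\mu=0$ for every $b$, giving equality without any appeal to opposite directions. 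If you want a self-contained proof rather than a citation, you would need to establish the first-variation upper bound and then follow this double-integral route.
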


\begin{proof} This is essentially a rewording of  \cite[Theorem 2.4]{LePaRiSt22}. The existence and properties of $L_\mu\X$ are stated in that theorem, and its item 2 gives:
\begin{equation}\label{eq:logpintegralis0v1}\int_{\X}\int_{\X}\,\langle \log_\mu x,\log_\mu y\rangle_\mu \,\mathbb{P}(dx)\,\mathbb{P}(dy)=0~.\end{equation}
The identity involving the Pettis integral is item 4 in the same theorem. Combining  this identity with (\ref{eq:logpintegralis0v1}) finishes the proof.\end{proof}

Before continuing, we notice that (\ref{eq:huggingalmostconv}) and (\ref{eq:Pzeromean}) together imply that
\begin{align*}
\EXP \left( d^2(b,X)-d^2(\mu,X) \right)
&= k_{\min} d^2(b,\mu)  >0
\end{align*}
whenever $b\neq\mu$. Therefore, the Fr\'{e}chet mean is unique. 

For the remainder of the proof, we let $\wh{\mu}:=\wh{\mu}^\epsilon_{n,t}$ be the estimator from Section \ref{sub:definetrimmed}.
We fix a countable dense subset $\X_{\rm cd}\subset\X$ and define
\begin{align*}
\cE &:= \sup_{b\in \X_{\rm cd } \, : \, d(b,\mu)>0}\frac{1}{d(b,\mu)} \wh{T}_{n,t}^\epsilon\left(\langle \log_\mu(\cdot), \log_\mu(b) \rangle_\mu\right)~.
	\end{align*}

\begin{lemma}
\label{lem:bound.by.calE}
	We have that
    \begin{align}\label{eq:distancetrimmed}
    d(\wh{\mu},\mu) & \leq   \frac{\sqrt{2}+1}{k_{\min}} \cE~.
    \end{align}
\end{lemma}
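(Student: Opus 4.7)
The plan is to apply Lemma \ref{lem:localization} to the function $b \mapsto \wh{T}^\epsilon_{n,t}(d^2(b,\cdot)-d^2(\mu,\cdot))$, using the hugging-based strong-convexity estimate \eqref{eq:huggingalmostconv} to turn the trimmed functional into a lower bound that is quadratic in $d(b,\mu)$ and linear in $\cE$. The key identity from Section \ref{sub:definetrimmed} that we will use is that $T_{n,t}$ is monotone, translation-equivariant, odd (since reversing the order statistics corresponds to negation), and positively homogeneous. Hence for any constant $c$ and function $h$, one has $\wh{T}^\epsilon_{n,t}(c-2h)=c-2\wh{T}^\epsilon_{n,t}(h)$.

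First, I would fix $b\in\X$ and apply \eqref{eq:huggingalmostconv} pointwise with $x=X_i^\epsilon$ and $y=b$ to obtain, for every $i$,
\[d^2(X_i^\epsilon,b)-d^2(X_i^\epsilon,\mu)\;\ge\; k_{\min}\,d^2(\mu,b)-2\langle \log_\mu X_i^\epsilon,\log_\mu b\rangle_\mu.\]
Applying monotonicity and the translation/odd/homogeneity properties of $\wh{T}^\epsilon_{n,t}$ gives
\[\wh{T}^\epsilon_{n,t}\bigl(d^2(b,\cdot)-d^2(\mu,\cdot)\bigr)\;\ge\; k_{\min}\,d^2(\mu,b)-2\,\wh{T}^\epsilon_{n,t}\bigl(\langle \log_\mu(\cdot),\log_\mu b\rangle_\mu\bigr).\]
For $b\in\X_{\rm cd}$ with $b\neq\mu$, the definition of $\cE$ bounds the second trimmed functional by $d(b,\mu)\cE$, yielding the pointwise bound
\[\wh{T}^\epsilon_{n,t}\bigl(d^2(b,\cdot)-d^2(\mu,\cdot)\bigr)\;\ge\; k_{\min}\,d^2(\mu,b)-2\,d(b,\mu)\,\cE.\]
The next step is a density argument: both sides are continuous in $b$ (the left-hand side because $T_{n,t}$ is a continuous function of its $n$ arguments and $x\mapsto d^2(b,x)$ is continuous in $b$, and the right-hand side obviously), so the inequality extends to all $b\in\X$. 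The case $b=\mu$ is trivial, and replacing $\cE$ by $\cE\vee 0$ if necessary we may assume $\cE\ge 0$.

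Finally, I optimize the choice of $r$ and $\gamma$ in Lemma \ref{lem:localization}. Setting $\varphi(d):=k_{\min}d^2-2d\,\cE$, the vertex of this parabola is at $d=\cE/k_{\min}$ with minimum $-\cE^2/k_{\min}$. Choosing
\[r:=\frac{(\sqrt{2}+1)\,\cE}{k_{\min}},\qquad \gamma:=\frac{\cE^2}{k_{\min}},\]
I verify directly that $\varphi(r)=\gamma$ (this is where the algebraic identity $(\sqrt{2}+1)(\sqrt{2}-1)=1$ produces the constant) and that $\varphi$ is strictly increasing on $[r,\infty)$, so $\varphi(d)>\gamma$ for $d>r$, giving \eqref{eq:lower1}; while $\min_{d\in[0,r]}\varphi(d)=-\cE^2/k_{\min}=-\gamma$ yields \eqref{eq:lower2}. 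Lemma \ref{lem:localization} then gives $d(\wh\mu,\mu)\le r=(\sqrt{2}+1)\cE/k_{\min}$, which is the claim.

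The main subtlety is not in any single step but in the bookkeeping around $\cE$ possibly being negative (harmlessly absorbed) and in the density/continuity extension from $\X_{\rm cd}$ to $\X$ needed to apply Lemma \ref{lem:localization}, whose hypotheses are stated for all $b\in\X$. The quantitative constant $\sqrt{2}+1$ arises from balancing the lower bound \eqref{eq:lower2} against the growth of $\varphi$ beyond its vertex, so the choice of $(r,\gamma)$ above is essentially forced.
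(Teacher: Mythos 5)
Your proof is correct and follows essentially the same route as the paper: the hugging inequality \eqref{eq:huggingalmostconv} combined with monotonicity and homogeneity of $\wh{T}^\epsilon_{n,t}$ and the definition of $\cE$ yields the quadratic lower bound $k_{\min}d^2(b,\mu)-2d(b,\mu)\cE$, which is then fed into Lemma \ref{lem:localization} with exactly the same choices $r=(\sqrt{2}+1)\cE/k_{\min}$ and $\gamma=\cE^2/k_{\min}$. Your explicit treatment of the density extension and of the sign of $\cE$ is a slightly more careful writing of steps the paper leaves implicit.
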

\begin{proof}%
	{\em Step 1:} 	We  claim that  for all $b\in\X$,
\begin{align}
\label{eq:lower.bound.T}
\wh{T}_{n,t}^\epsilon\left( d^2(b,\cdot) -    d^2(\mu,\cdot)\right) 
  \ge& \left( d(b,\mu)\sqrt{k_{\min}} - \frac{\cE}{\sqrt{k_{\min}}}\right)^2
    - \frac{\cE^2}{k_{\min}}~.
\end{align}
	Indeed, first note that  it suffices to prove \eqref{eq:lower.bound.T} only for all $b\in \X_{\rm cd}$ (by continuity).
	Fix such $b$.
	By (\ref{eq:huggingalmostconv}), the monotonicity of $\wh{T}_{n,t}^\epsilon(\cdot)$ and the definition of $\mathcal{E}$,
\begin{eqnarray*}
   \wh{T}_{n,t}^\epsilon\left( d^2(b,\cdot) -  d^2(\mu,\cdot)\right)   
  &  \ge  &   k_{\min} d^2(b,\mu) - 2d(b,\mu)
\frac{1}{d(b,\mu)} \wh{T}_{n,t}^\epsilon \left(\langle \log_\mu(\cdot), \log_\mu(b) \rangle_\mu\right) \\
&\geq &    k_{\min} d^2(b,\mu) - 2d(b,\mu) \cE \\
&= &\left( d(b,\mu)\sqrt{k_{\min}} - \frac{\cE}{\sqrt{k_{\min}}}\right)^2
    - \frac{\cE^2}{k_{\min}}~.
\end{eqnarray*}
and \eqref{eq:lower.bound.T} follows.

{\em Step 2:} We apply Lemma \ref{lem:localization}.
To that end, define 
\[r:=\frac{\sqrt{2}+1}{k_{\min}} \cE\quad \mbox{ and }\quad \gamma :=\frac{\cE^2}{k_{\min}}\]
and observe that by \eqref{eq:lower.bound.T}, $\wh{T}_{n,t}^\epsilon( d^2(b,\cdot) -    d^2(\mu,\cdot)) \geq -\gamma$ all $b\in \mathcal{X}$.
Moreover, if $b\in\mathcal{X}$ satisfies that $d(b,\mu)>r$, then \eqref{eq:lower.bound.T} implies that $\wh{T}_{n,t}^\epsilon( d^2(b,\cdot) -   d^2(\mu,\cdot)) >\gamma$.
Therefore, an application of Lemma \ref{lem:localization} shows that $   d(\wh{\mu},\mu) \leq r$,  as claimed.
\end{proof}
      
With Lemma \ref{lem:bound.by.calE} at hand, the next ingredient in the proof of Theorem \ref{thm:geodesic.main} consists in accurately bounding $\mathcal{E}$.
To that end, we will apply Theorem \ref{thm:trimmed.mean} to the class of functions
\[\mathcal{F} :=\left\{ \frac{\langle \log_\mu(\cdot), \log_\mu(b) \rangle_\mu }{d(b,\mu)}  \, : \,  b\in\mathcal{X}_{\rm cd} \text{ and } d(b,\mu)>0\right\}\]
so that 
\[ \cE = \sup_{f\in\mathcal{F}} \wh{T}_{n,t}^\epsilon(f)~.\]
Observe that by the first-order condition for $\mu$, see \eqref{eq:Pzeromean},  each function in $\mathcal{F}$ has zero mean.
In order to control the ``global variance''
\[ V_n:=\EXP  \sup_{b\in \X_{\rm cd} \,: \, d(b,\mu)>0}  \left| \frac{1}{n d(b,\mu)} \sum_{i=1}^n
    \langle\log_\mu(X_i), \log_\mu(b) \rangle_\mu \right| \]
which appears in Theorem \ref{thm:trimmed.mean}, let $\mathbb{P}_n$ denote the random empirical measure of the points $X_1,\ldots,X_n$, which puts mass $1/n$ of each of them. Since the support of $\mathbb{P}_n$ is almost surely contained in ${\rm supp}(\mathbb{P})$, we can apply (\ref{eq:Pettisuse}) in Proposition \ref{prop:stolenfromLePaRiSt} and the Cauchy-Schwarz-type inequality (\ref{eq:CSinthecone}) to obtain
\begin{align}
\label{eq:VnHilbert}
V_n
&:=\EXP  \sup_{b\in \X_{\rm cd} \,: \, d(b,\mu)>0}  \left| \frac{1}{d(b,\mu)} 
   \left\langle\frac{1}{n}\sum_{i=1}^n\log_\mu(X_i), \log_\mu(b) \right\rangle_\mu \right|
\nonumber\\
&\leq  \EXP\,\left\|\frac{1}{n}\sum_{i=1}^n\log_\mu(X_i)-o_\mu\right\|_\mu~.
\end{align}

\begin{lemma}
\label{lem:variance.sum}
	We have that 
	\[ V_n
    \leq \sqrt\frac{ \E d^2(X,\mu)}{n}.\]
\end{lemma}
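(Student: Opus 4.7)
The plan is to carry the bound (\ref{eq:VnHilbert}) the rest of the way by exploiting the fact that, by Proposition \ref{prop:stolenfromLePaRiSt}, the quantities $\log_\mu(X_i)$ live almost surely in the Hilbert space $L_\mu\X$, inside which the standard second-moment calculation for sums of i.i.d.\ centered random vectors applies.

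First I would invoke Jensen's inequality on the right-hand side of (\ref{eq:VnHilbert}) to pass to the squared expectation:
\[ V_n^2 \;\leq\; \EXP\left\|\tfrac{1}{n}\sum_{i=1}^n \log_\mu(X_i) - o_\mu\right\|_\mu^2 \;=\; \frac{1}{n^2}\,\EXP\left\|\sum_{i=1}^n \bigl(\log_\mu(X_i)-o_\mu\bigr)\right\|_\mu^2.\]
Next I would expand this squared norm using the Hilbert-space inner product on $L_\mu\X$ provided by Proposition \ref{prop:stolenfromLePaRiSt}, namely
\[ \EXP\left\|\sum_{i=1}^n \bigl(\log_\mu(X_i)-o_\mu\bigr)\right\|_\mu^2 \;=\; \sum_{i=1}^n \EXP\,\|\log_\mu(X_i)-o_\mu\|_\mu^2 \;+\; \sum_{i\neq j} \EXP\,\langle \log_\mu(X_i),\log_\mu(X_j)\rangle_\mu, \]
where I used that $\langle o_\mu,\cdot\rangle_\mu = 0$ (by (\ref{eq:tangentip}) applied with $v=o_\mu$, since $\|o_\mu-o_\mu\|_\mu=0$).

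The heart of the argument is that the cross terms vanish. Fix $i\neq j$ and condition on $X_j$. Using the identity (\ref{eq:Pettisuse}) with $\mathbb{Q}=\mathbb{P}$ (the law of $X$) and $b$ replaced by $X_j$ (which lies in ${\rm supp}(\mathbb{P})$ almost surely), together with the independence of $X_i$ and $X_j$, and the first-order optimality condition (\ref{eq:Pzeromean}), one obtains
\[ \EXP\bigl[\langle \log_\mu(X_i),\log_\mu(X_j)\rangle_\mu \,\big|\, X_j\bigr] \;=\; \left\langle \EXP\,\log_\mu(X_i),\, \log_\mu(X_j)\right\rangle_\mu \;=\; \langle o_\mu, \log_\mu(X_j)\rangle_\mu \;=\; 0, \]
so that $\EXP\,\langle \log_\mu(X_i),\log_\mu(X_j)\rangle_\mu = 0$. (The potential subtlety here is justifying that the Pettis mean of $\log_\mu(X)$ is $o_\mu$; this is exactly what (\ref{eq:Pzeromean}) together with the characterization of the Pettis integral in Proposition \ref{prop:stolenfromLePaRiSt} provides.)

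Finally, by (\ref{eq:logpdistance}) we have $\|\log_\mu(X_i)-o_\mu\|_\mu^2 = d^2(X_i,\mu)$, so $\EXP\,\|\log_\mu(X_i)-o_\mu\|_\mu^2 = \EXP\,d^2(X,\mu)$ for every $i$. Substituting back yields
\[ V_n^2 \;\leq\; \frac{n\,\EXP\,d^2(X,\mu)}{n^2} \;=\; \frac{\EXP\,d^2(X,\mu)}{n}, \]
which is the claimed bound. The only place that required genuine care is the vanishing of the cross terms, and this is really a cosmetic issue: once one accepts the Pettis-integral framework of Proposition \ref{prop:stolenfromLePaRiSt}, the computation is the standard one for i.i.d.\ centered Hilbert-valued random variables.
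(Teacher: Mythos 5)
Your argument is correct and follows essentially the same route as the paper: Jensen's inequality applied to \eqref{eq:VnHilbert}, followed by the variance identity for sums of i.i.d.\ centered random elements of the Hilbert space $L_\mu\X$ (which the paper invokes as the ``law of sums of variances'' and you verify by expanding the square and killing the cross terms via \eqref{eq:Pettisuse} and \eqref{eq:Pzeromean}), and finally the identity \eqref{eq:logpdistance}. The extra detail you supply on the vanishing of the cross terms is a correct unpacking of the step the paper states in one line.
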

\begin{proof}
	It follows from Proposition \ref{prop:stolenfromLePaRiSt} that the random vectors $(\log_\mu(X_i))_{i=1}^n$ are i.i.d.\ mean-zero random elements of the Hilbert space $L_\mu\X$, and $o_\mu$ is the zero element of this space. Therefore, by Jensen's inequality and by the law of sums of variances in a Hilbert space,
\begin{align*}
V_n^2 
&\leq \EXP\left\| \frac{1}{n} \sum_{i=1}^n\log_\mu(X_i) - o_\mu\right\|_\mu^2 
= \frac{ \EXP\|\log_\mu(X) - o_\mu\|_\mu^2 }{n}
= \frac{ \EXP d^2(X,\mu) }{n}~,
\end{align*}
where the last equality follows from (\ref{eq:logpdistance}).\end{proof}

We are now ready for the proof of Theorem \ref{thm:geodesic.main}.

\begin{proof}
	By Lemma \ref{lem:bound.by.calE}, we have that 
	\[ d(\wh{\mu},\mu)\leq \frac{\sqrt{2}+1}{k_{\min}} \cE~. \]
	Moreover, Theorem \ref{thm:trimmed.mean} implies that, with probability at least $1-\delta$, 
	\begin{align*}
	\cE 
	&\leq C_\epsilon \left( 8V_n +  \inf_{p\in[1,2]} \nu_p\,\left( \frac{\ln(3/\delta)}{n}\right)^{1- \frac{1}{p}} + \inf_{p\in[1,\infty)}\nu_p\,\epsilon^{1-\frac{1}{p}}   \right) \\
	&\leq C_\varepsilon \left( 8 \sqrt \frac{ \E d^2(X,\mu)}{ n}  + \sqrt \frac{\sigma^2_{\rm w} \log(3/\delta)}{n} + \inf_{p\in[1,\infty)}\nu_p\,\epsilon^{1-\frac{1}{p}}  \right)~,
	\end{align*}
	where the second inequality follows from Lemma \ref{lem:variance.sum} and by choosing $p=2$ in the second term (noting that $\nu_2=\sigma_{\rm w}$ by definition). $C_\epsilon$ is defined in Theorem \ref{thm:trimmed.mean}.
	This completes the proof.   
\end{proof}

\subsection{Proof of Theorem \ref{thm:geodesic.subgaussian}}\label{proof:geodesic.subgaussian}

We present a proof that follows closely the proof of Theorem \ref{thm:geodesic.main}.
To that end, note that $\wh{T}_{n,0}^0(f)=\frac{1}{n}\sum_{i=1}^n f(X_i)$ is the empirical mean of  $f$ computed using the (uncorrupted) sample, hence 
\begin{align*}
\overline{\mu}_n = \wh{\mu}^0_{n,0} 
\in \mathop{\rm argmin}_{b\in\X}  \frac{1}{n}\sum_{i=1}^n d^2(X_i,b)
&= \mathop{\rm argmin}_{b\in\X} \sup_{a\in\X}  \frac{1}{n}\sum_{i=1}^n \left( d^2(b,X_i)- d^2(a,X_i) \right)  \\
&\equiv \mathop{\rm argmin}_{b\in\X} \sup_{a\in\X} \wh{T}_{n,0}^0\left( d^2(b,\cdot)- d^2(a,\cdot) \right)~.
\end{align*}
Let $\X_{\rm cd}\subset\X$ be a countable dense subset and set
\[ \cE := \sup_{b\in \X_{\rm cd } \, : \, d(b,\mu)>0}\frac{1}{ nd(b,\mu)} \sum_{i=1}^n  \langle \log_\mu(X_i), \log_\mu(b) \rangle_\mu~.\]
Then the same proof as given for Lemma \ref{lem:bound.by.calE} shows that
    \begin{align}
    \label{lem:bound.by.calE.gaussian}
    d\left(\overline{\mu}_n,\mu \right) 
    & \leq   \frac{\sqrt{2}+1}{k_{\min}} \cE~.
    \end{align}
Thus, it suffices to estimate $\cE$.

To this end, an application of Proposition \ref{prop:stolenfromLePaRiSt} in the spirit of (\ref{eq:VnHilbert}) shows that
\begin{align}
\label{eq:cE.gaussian}
\cE
&\leq  \left\|\frac{1}{n}\sum_{i=1}^n  \log_\mu(X_i) - o_\mu\right\|_\mu.
    \end{align}
Moreover, we recall from the same proposition that $ \log_\mu(X)$ is a random vector taking its values in the Hilbert space $L_\mu\X\subset T_\mu\X$, and that $\|\cdot\|_\mu$ is the Hilbert-norm on $L_\mu\X$.
Thus, by standard arguments (for example, a sub-Gaussian chaining argument combined with Talagrand's celebrated majorizing measures theorem see, e.g., Theorems 2.7.13 and 2.10.1 in \cite{talagrand2022upper}), it follows that there exists an absolute constant $C$ such that, with probability at least $1-\delta$,
\begin{align}
\label{eq:gaussian.estimate}
\left\|\frac{1}{n}\sum_{i=1}^n  \log_\mu(X_i) - o_\mu \right\|_\mu
\leq CL  \left( \frac{ \E \| Z\|_\mu }{ \sqrt n} + \sqrt \frac{ \sigma_{\rm w}^2 \log(1/\delta) }{n}   \right)~.
\end{align}
Here $Z$ is a zero mean Gaussian random variable in the Hilbert space $L_\mu\X$ with the same covariance as $\log_\mu(X)$.
In particular,
\begin{align*}
 \E \|Z\|_\mu^2 
&= {\rm trace} \, {\rm \mathbb{C}ov}[Z] 
= {\rm trace} \,  {\rm \mathbb{C}ov}[\log_\mu(X)]\\
&=  \E \|\log_\mu(X) - o_\mu\|_\mu^2
= \E d^2(X,\mu)~, 
\end{align*}
where the last equality follows from (\ref{eq:logpdistance}). Hence, by Jensen's inequality, $\E \|Z\|_\mu \leq \sqrt{ \E d^2(X,\mu)}$.
Thus the proof of Theorem is completed by \eqref{lem:bound.by.calE.gaussian}, \eqref{eq:cE.gaussian} and \eqref{eq:gaussian.estimate}. 
\endproof

\section{Fr\'{e}chet mean estimation in uniformly convex Banach spaces}
\label{sec:banach.space}

In this section, we show that for uniformly convex Banach spaces, the methods developed in this paper remain applicable, allowing us to construct statistically well-behaved estimators for the Fréchet mean.
Let $(\X,\|\cdot\|)$ be a separable Banach space and assume that it is {\em uniformly convex of power type $2$}: there exists a constant $c_{\X}>0$ such that for all $\eta\in (0,2]$ and $x,y\in \X$ with $\|x\|=\|y\|=1$,
\[\|x-y\|\geq \eta \hspace{1em} \Rightarrow  \hspace{1em} \textbf{}\left\|\frac{x+y}{2}\right\|\leq 1-c_\X\eta^2~.\]
The spaces $\ell^p$ with $p\in(1,2]$ are uniformly convex of power type 2. We discuss in detail these spaces in Sections \ref{sec:example.lp} and \ref{sec:frechet.vs.mean}.
Denote by  $(\X^\ast,\|\cdot\|_\ast)$ the dual of $(\X,\|\cdot\|)$, by $\langle \cdot,\cdot\rangle\colon\X^\ast\times\X\to\R$ the natural pairing of $\X^\ast$ and $\X$, and fix a Borel-measurable function
\[ g\colon \X \to\X^\ast, \qquad g(x)\in\partial\left( \|\cdot\|^2 \right)(x)~,\]
where $\partial f$ denotes the subgradient of a convex function $f$.
The existence is guaranteed by Theorem \ref{thm:measurable.subgradient}.
Note that if $f$ is differentiable, then $\partial f =\{ \nabla f\}$. In particular, $\partial \|\cdot\|^2(x)=\{2x\}$ if $\|\cdot\|$ is the Euclidean norm.

Moreover, assume that $\EXP \|X\|^2<\infty$. 
We let $\mu\in\mathop{\rm argmin}_{b\in\X} \E\|X-b\|^2 $
be the unique Fréchet mean of $X$:
existence of $\mu$ is guaranteed by Lemma \ref{lem:existence.barycenter}, whereas uniqueness follows from Remark \ref{rem:uniquefrechetbanach} below.
Finally, set
\[ \nu_p:=\sup_{x\in \X \, : \|x\|=1} \left( \E |\langle  g(X-\mu), x\rangle|^p \right)^{1/p}\]
and put $\sigma^2_{\rm w}:= \nu_2^2$.
Recall that $(X_i^\epsilon)_{i=1}^n$ is the $\varepsilon$-corrupted sample.
The following is the main result of this section.

\begin{theorem}
\label{thm:main.norm.space}
	Let $\delta\in(0,1)$, assume that
	\[t:=\lfloor \epsilon n\rfloor +\lceil \ln(2/\delta)\rceil \vee \left\lceil \tfrac{\epsilon\wedge \left(\frac{1}{2} - \epsilon\right)}{2}\,n\right\rceil < \frac{n}{2}~,\]
	and that $(\X,\|\cdot\|)$ is uniformly convex of power type $2$ with constant $c_\X$.
	Then the estimator $\wh{\mu}=\wh{\mu}_{n,t}^\varepsilon$ from Section \ref{sub:trimmedestimators} satisfies that with probability at least $1-\delta$,
\[ \left\|\wh{\mu}- \mu\right\|
\leq \frac{C_\eps }{C_\X} \left( 23\sqrt \frac{  \E \|X-\mu\|^2 }{ C_\X n} +  \sqrt \frac{ \sigma_{\rm w}^2 \log(3/\delta)}{n} + \inf_{p\in[1,\infty)} \nu_p \varepsilon^{1-1/p} \right)~.\]
	where $C_\epsilon=928(1+\frac{\epsilon}{\epsilon\wedge (\frac{1}{2}-\epsilon)})$ and $C_\X$ is a constant that only depends on $c_\X$.
\end{theorem}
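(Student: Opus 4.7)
The plan is to mirror the proof of Theorem~\ref{thm:geodesic.main}, with the subgradient pairing $\langle g(\cdot - \mu),\, b - \mu\rangle$ taking the role of $2\langle \log_\mu(\cdot), \log_\mu(b)\rangle_\mu$ in the Alexandrov setting. First, I would establish a strong-convexity substitute for inequality~\eqref{eq:huggingalmostconv}: there exists $C_\X > 0$ depending only on $c_\X$ such that, for every $x, b \in \X$,
\begin{equation*}
\|x - b\|^2 \;-\; \|x - \mu\|^2 \;\geq\; C_\X\, \|b - \mu\|^2 \;-\; \langle g(x - \mu),\, b - \mu\rangle.
\end{equation*}
This is the classical consequence of $(\X,\|\cdot\|)$ being uniformly convex of power type~$2$: the squared norm $\|\cdot\|^2$ is uniformly convex with quadratic modulus, so its Bregman divergence satisfies $\|z\|^2 - \|y\|^2 - \langle g(y), z-y\rangle \geq C_\X\|z-y\|^2$, and specializing to $y = x-\mu$, $z = x-b$ yields the displayed bound. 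Taking expectations and using the first-order optimality condition $\E[g(X-\mu)] = 0$ recovers the uniqueness of $\mu$ (alluded to in Remark~\ref{rem:uniquefrechetbanach}).

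Next I would prove a Banach analogue of Lemma~\ref{lem:bound.by.calE}: fixing a countable dense subset $\X_{\rm cd} \subset \X$ and defining
\begin{equation*}
\cE \;:=\; \sup_{b \in \X_{\rm cd},\, b \neq \mu}\; \frac{1}{\|b-\mu\|}\, \wh{T}^\epsilon_{n,t}\bigl(\langle g(\cdot - \mu),\, b - \mu\rangle\bigr),
\end{equation*}
the monotonicity, translation-invariance, and real-homogeneity of $\wh{T}^\epsilon_{n,t}$ combined with the inequality above give, for every $b \in \X_{\rm cd}$,
\begin{equation*}
\wh{T}^\epsilon_{n,t}\bigl(\|\cdot-b\|^2 - \|\cdot-\mu\|^2\bigr) \;\geq\; C_\X\|b-\mu\|^2 - \|b-\mu\|\,\cE \;=\; \Bigl(\|b-\mu\|\sqrt{C_\X} - \tfrac{\cE}{2\sqrt{C_\X}}\Bigr)^2 - \tfrac{\cE^2}{4C_\X}.
\end{equation*}
Extending this inequality to all $b \in \X$ by continuity and applying Lemma~\ref{lem:localization} with $\gamma = \cE^2/(4C_\X)$ and $r = \tfrac{1+\sqrt{2}}{2C_\X}\cE$ yields $\|\wh{\mu} - \mu\| \leq \tfrac{1+\sqrt{2}}{2C_\X}\,\cE$.

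Finally, I would bound $\cE$ via Theorem~\ref{thm:trimmed.mean} applied to the countable class $\mathcal{F} := \bigl\{ \langle g(\cdot - \mu), v\rangle / \|v\| : v \in \X_{\rm cd},\, v \neq 0 \bigr\}$. The elements of $\mathcal{F}$ are centered (first-order optimality), their $L^p$-moments are bounded by $\nu_p$ by definition, and $\nu_2 = \sigma_{\rm w}$. For the global-variance term, $V_n \leq \E\,\|n^{-1}\sum_{i=1}^n g(X_i-\mu)\|_\ast$. Since $(\X,\|\cdot\|)$ is uniformly convex of power type~$2$, its dual $(\X^*,\|\cdot\|_\ast)$ is uniformly smooth of power type~$2$, hence of Rademacher type~$2$ with constant depending only on $c_\X$; combined with the identity $\|g(y)\|_\ast = 2\|y\|$ for $g(y) \in \partial\|\cdot\|^2(y)$ and the fact that the $g(X_i-\mu)$ are i.i.d.\ mean zero, this yields $V_n \lesssim \sqrt{\E\|X-\mu\|^2/(C_\X\, n)}$. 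Plugging these bounds into Theorem~\ref{thm:trimmed.mean} (choosing $p = 2$ in the confidence term) and combining with the estimate of the previous paragraph gives the claimed inequality.

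The main obstacle is the quantitative bookkeeping in the first and third steps: extracting the strong-convexity constant from the geometric power-type-$2$ assumption (Ball-Carlen-Lieb type arguments) and matching it with the Rademacher-type-$2$ constant of the dual (due to Pisier), in such a way that the final prefactor depends only on $c_\X$. Otherwise, the argument is a direct transcription of the Alexandrov proof in Section~\ref{proof:geodesic.main}.
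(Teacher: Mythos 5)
Your proposal is correct and follows essentially the same route as the paper: the strong-convexity inequality for $\|\cdot\|^2$ derived from power-type-$2$ uniform convexity (the paper cites Borwein--Guirao--H\'ajek--Vanderwerff for \eqref{eq:uniformlyconvex}), a Banach analogue of Lemma~\ref{lem:bound.by.calE} via Lemma~\ref{lem:localization}, and a bound on $V_n$ through the power-type-$2$ smoothness of the dual together with $\|g(y)\|_\ast=2\|y\|$, followed by Theorem~\ref{thm:trimmed.mean} with $p=2$. The only cosmetic difference is that the paper proves the dual type-$2$ inequality for $\E\|\sum_i g(X_i-\mu)\|_\ast^2$ directly by a conditional-expectation recursion (Lemmas~\ref{lem:norm.sdual.smooth}--\ref{lem:norm.variance.sum}) with an explicit constant $2/C_\X$, rather than invoking Rademacher/martingale type~$2$ abstractly.
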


The proof of the theorem is given in Section \ref{sec:proofof.main.norm.space} below.

Analogous to the explanations detailed in Remark \ref{rem:breakdown.point.geodesic}, the breakdown point of the estimator constructed in Theorem \ref{thm:main.norm.space} is $\varepsilon\approx \frac{1}{2}$.
In Section \ref{sec:example.lp} we will apply the results of the theorem to $\ell^p$-spaces, for which we also explicitly compute the constants $C_\X$. 
Section \ref{sec:frechet.vs.mean} discusses aspects concerning the estimator's optimality and non-optimality.

Finally, we establish high-probability estimates for the standard empirical barycenter $\overline{\mu}_n$ in the case of light-tailed (and uncorrupted) data.
To that end, we say that a random vector $X$ with Fréchet mean $\mu$ is $L$-sub-Gaussian if for all $x\in \X$ and $u\geq 0$,
\[ \mathbb{P}\left( |\langle  g(X-\mu), x \rangle | \geq u \|\langle g(X-\mu) ,x  \rangle \|_{L_2}  \right)
\leq 2\exp\left(-\frac{u^2}{2 L^2} \right)~. \]
In order to avoid digressing into technicalities, we assume here that $\mathcal{X} = \mathbb{R}^m$, that $\|\cdot\|^2$ is differentiable, and that its gradient $\nabla(\|\cdot\|^2)$ is invertible with a measurable inverse (see Remark \ref{rem:sub.gaussian.Banach.general} for a discussion on the result in the general case).
Those assumptions are satisfied, for instance, when $\X=\ell^p$ and $p\in(1,2]$, see Section \ref{sec:example.lp}.

\begin{theorem}
\label{thm:norm.space.subgaussian}
    Let $\delta\in(0,1)$ satisfy that $\delta\geq \exp(-n)$.
    Then, with probability at least $1-\delta$,	
\[ \left\|\overline{\mu}_n- \mu\right\|
\leq \frac{CL}{C_\X} \left(  \sqrt \frac{ \E \|Y-\mu\|^2 }{  n} +  \sqrt \frac{ \sigma_{\rm w}^2 \log(3/\delta)}{n} \right)~.\]
    Here $C$ is an absolute constant and $Y$ is a random vector for which $g(Y-\mu)$ has Gaussian distribution with mean 0 and the same covariance as $g(X-\mu)$, and, as before,  $C_\X$ is a constant that only depends on $c_\X$.
\end{theorem}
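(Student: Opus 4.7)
The plan is to closely mirror the proof of Theorem \ref{thm:geodesic.subgaussian}, replacing the log map and tangent-cone inner product by the measurable selection $g$ and the duality pairing, and replacing the hugging inequality by a strong-convexity estimate coming from uniform convexity of power type 2. The first step is to establish a constant $C_\X$ depending only on $c_\X$ such that for every $x,b\in\X$,
\[\|x-b\|^2-\|x-\mu\|^2 \geq -\langle g(x-\mu),\,b-\mu\rangle + C_\X\|b-\mu\|^2.\]
This is the Banach analog of \eqref{eq:huggingalmostconv} and is a standard consequence of the equivalence between uniform convexity of power type 2 and $C_\X$-strong convexity of $\|\cdot\|^2$, with $C_\X$ depending only on $c_\X$. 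In parallel, the first-order optimality condition $\E g(X-\mu)=0$ is obtained by differentiating $b\mapsto \E\|X-b\|^2$ at its minimizer; this is justified because $\|\cdot\|^2$ is differentiable by assumption and $\E\|X\|^2<\infty$.

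Given this convexity estimate, applying it with $x=X_i$ and $b=\overline{\mu}_n$, averaging, and using that $\overline{\mu}_n$ minimizes the empirical risk (so that the averaged left-hand side is nonpositive) yields
\[C_\X\,\|\overline{\mu}_n-\mu\|^2 \leq \left\langle \frac{1}{n}\sum_{i=1}^n g(X_i-\mu),\,\overline{\mu}_n-\mu\right\rangle \leq \|\overline{\mu}_n-\mu\|\cdot \left\|\frac{1}{n}\sum_{i=1}^n g(X_i-\mu)\right\|_\ast.\]
Dividing by $\|\overline{\mu}_n-\mu\|$ reduces the theorem to a high-probability upper bound on the dual-norm of the empirical mean of the i.i.d.\ centered random vectors $g(X_i-\mu)\in\X^\ast$.

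The third step controls this dual norm by a sub-Gaussian chaining argument, in direct analogy with the proof of Theorem \ref{thm:geodesic.subgaussian}. Rewriting the dual norm as $\sup_{\|x\|=1}\frac{1}{n}\sum_{i=1}^n \langle g(X_i-\mu),x\rangle$ over a countable dense subset of the unit sphere (for measurability), the process indexed by $x$ is zero-mean, and the assumed $L$-sub-Gaussianity shows that its $\psi_2$-increments are bounded by $CL$ times its $L^2$-increments, which are themselves at most $\sigma_{\rm w}\|x-y\|_{\text{cov}}$ in the relevant metric. Talagrand's majorizing-measures theorem together with the standard sub-Gaussian deviation bound then yield, with probability at least $1-\delta$,
\[\left\|\frac{1}{n}\sum_{i=1}^n g(X_i-\mu)\right\|_\ast \leq CL\left(\frac{\E\|G\|_\ast}{\sqrt n} + \sigma_{\rm w}\sqrt{\frac{\log(1/\delta)}{n}}\right),\]
where $G$ is the Gaussian vector in $\X^\ast$ having the same covariance as $g(X-\mu)$.

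Finally, under the stated differentiability and invertibility assumptions on $\nabla\|\cdot\|^2$, the map $g/2$ coincides with the duality map of $(\X,\|\cdot\|)$, so $\|g(u)\|_\ast=2\|u\|$ for every $u\in\X$; hence choosing $Y$ with $g(Y-\mu)=G$ gives $\|G\|_\ast=2\|Y-\mu\|$, and Jensen's inequality yields $\E\|G\|_\ast\leq 2\sqrt{\E\|Y-\mu\|^2}$. Assembling the previous displays and dividing by $C_\X$ produces the claimed bound, with universal factors absorbed into $C$. The main obstacle I anticipate is the first step: deriving the strong convexity of $\|\cdot\|^2$ with an explicit constant $C_\X=C_\X(c_\X)$ and justifying $\E g(X-\mu)=0$ rigorously under the measurable subgradient setup. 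The rest of the argument is a fairly mechanical translation of the geodesic sub-Gaussian proof into the Banach setting.
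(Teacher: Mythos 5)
Your proposal is correct and follows essentially the same route as the paper: reduce $\|\overline{\mu}_n-\mu\|$ to a bound on $\bigl\|\frac{1}{n}\sum_{i=1}^n g(X_i-\mu)\bigr\|_\ast$ via strong convexity of $\|\cdot\|^2$ and the first-order condition $\E\,g(X-\mu)=0$, then control that dual norm by sub-Gaussian chaining against the Gaussian $Z$ with the same covariance, and finally convert $\E\|Z\|_\ast$ into $2\sqrt{\E\|Y-\mu\|^2}$ using $\|g(u)\|_\ast=2\|u\|$ (Lemma \ref{lem:dual.norm.2}) and Jensen. The one place you deviate is the reduction step: the paper reuses the minimax localization machinery (Lemma \ref{lem:localization} via Lemma \ref{lem:norm.calE} with $t=0$, $\epsilon=0$, yielding the factor $(\sqrt{2}+1)/C_\X$), whereas you argue directly that averaging \eqref{eq:strongconvexity2} over the sample and using that $\overline{\mu}_n$ minimizes the empirical risk gives $C_\X\|\overline{\mu}_n-\mu\|^2\leq \|\frac{1}{n}\sum_i g(X_i-\mu)\|_\ast\,\|\overline{\mu}_n-\mu\|$. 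Your direct argument is simpler and perfectly valid for the plain empirical Fr\'echet mean (the paper's detour through the localization lemma is only needed because the trimmed-mean operator is not linear, so the ``average the inequality and use minimality'' trick does not apply there); both reductions land on the identical chaining step, and the remaining ingredients (strong convexity with $C_\X=C_\X(c_\X)$ and the first-order condition) are exactly the ones the paper establishes in Section \ref{sec:proofof.main.norm.space}.
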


The theorem is proved in Section \ref{sec:main.norm.space.subgaussian}.

\begin{remark}
\label{rem:sub.gaussian.Banach.general}
    In the general case (when $\|\cdot\|^2$ is not necessarily differentiable with an invertible gradient) the assertion of Theorem \ref{thm:norm.space.subgaussian} remains valid, but the term $\E \|Y-\mu\|^2$ needs to be replaced by $\E \|Z\|_\ast^2$, where $Z$ is a mean-zero Gaussian random vector with the same covariance matrix as $g(X-\mu)$.
\end{remark}

\subsection{Proof of Theorem \ref{thm:main.norm.space}}
\label{sec:proofof.main.norm.space}

We first observe that a separable uniformly convex space is necessarily reflexive by the Milman–Pettis theorem.
Moreover, the following equivalent definition of uniform convexity turns out to be more useful for our purposes: 
 the function $f(\cdot)=\|\cdot\|^2$ is convex and, moreover, for any $x,y\in \X$ and any subgradient $x^\ast\in\partial f(x)$ of $f$ at $x$,
\begin{equation}\label{eq:uniformlyconvex}
\|y\|^2 \geq \|x\|^2 + \langle x^\ast,y-x\rangle + \frac{C_{\X}}{2}\|x-y\|^2
\end{equation}
for a constant $C_{\X}$ depending only on $c_\X$, see, e.g.,
Borwein, Guirao, Hájek, and Vanderwerff \cite[Theorem 2.3]{borwein2009uniformly}.

For the remainder of this section, set $\widehat{\mu}:=\wh{\mu}_{n,t}^\varepsilon$ to be the trimmed-mean Fréchet-mean estimator defined in Section \ref{sub:trimmedestimators}.
Let $\X_{\rm cd} \subset \X$ be a countable dense subset and put 
\[\X_{\rm cd,1}:=\left\{ \frac{ x-\mu}{\|x-\mu\|}  : \, x\in\X_{\rm cd}, \, x\neq \mu\right\}~.\]

\begin{lemma}
\label{lem:norm.calE}
We have 
\[\|\wh{\mu}-\mu\|\leq \frac{\sqrt{2}+1}{C_\X}\cE~, 
\qquad \text{where }
 \cE := \sup_{x\in \X_{\rm cd,1} }\wh{T}_{n,t}^\epsilon\left( \langle g(\, \cdot \, -\mu) ,x \rangle \right)~.\]
\end{lemma}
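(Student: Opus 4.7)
The proof will follow the blueprint of Lemma~\ref{lem:bound.by.calE}, with the hugging-function inequality \eqref{eq:huggingalmostconv} replaced by the uniform-convexity estimate \eqref{eq:uniformlyconvex}, and then concludes via the localization lemma (Lemma~\ref{lem:localization}).

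First, I would derive a pointwise ``almost-strong-convexity'' inequality. For any $z,b\in\X$, applying \eqref{eq:uniformlyconvex} with $x=\mu-z$, $y=b-z$, and the subgradient $x^*=-g(z-\mu)$---which belongs to $\partial(\|\cdot\|^2)(\mu-z)$ since $\|\cdot\|^2$ is an even function and hence $\partial(\|\cdot\|^2)(\mu-z)=-\partial(\|\cdot\|^2)(z-\mu)$---yields
\[
\|b-z\|^2-\|\mu-z\|^2 \;\geq\; -\langle g(z-\mu),\,b-\mu\rangle + \frac{C_\X}{2}\|b-\mu\|^2.
\]

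Second, I would apply $\wh{T}_{n,t}^\eps$ to both sides. By monotonicity, by homogeneity (with $\lambda=-1$ allowed), and by the immediate constant-shift property $\wh{T}_{n,t}^\eps(f+c)=\wh{T}_{n,t}^\eps(f)+c$, this gives
\[
\wh{T}_{n,t}^\eps\bigl(\|b-\cdot\|^2-\|\mu-\cdot\|^2\bigr)\;\geq\;-\wh{T}_{n,t}^\eps\bigl(\langle g(\cdot-\mu),b-\mu\rangle\bigr)+\frac{C_\X}{2}\|b-\mu\|^2.
\]
For $b\neq\mu$, homogeneity reduces the first term on the right to $-\|b-\mu\|\cdot \wh{T}_{n,t}^\eps(\langle g(\cdot-\mu),e\rangle)$ with $e:=(b-\mu)/\|b-\mu\|$, and I would then argue that $|\wh{T}_{n,t}^\eps(\langle g(\cdot-\mu),e\rangle)|\leq \cE$ for every unit vector $e$. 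This follows from a short density/continuity argument: the map $x\mapsto\wh{T}_{n,t}^\eps(\langle g(\cdot-\mu),x\rangle)$ is odd and continuous in $x$ (being a composition of the continuous order-statistic operation with a linear map), $\X_{\rm cd,1}$ is dense in the unit sphere of $\X$, and the supremum $\cE$ therefore extends to the full sphere.

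Third, I would complete the square to obtain
\[
\wh{T}_{n,t}^\eps\bigl(\|b-\cdot\|^2-\|\mu-\cdot\|^2\bigr)\;\geq\;\frac{C_\X}{2}\Bigl(\|b-\mu\|-\tfrac{\cE}{C_\X}\Bigr)^2 - \frac{\cE^2}{2C_\X},
\]
and apply Lemma~\ref{lem:localization} with $r:=(\sqrt{2}+1)\cE/C_\X$ and $\gamma:=\cE^2/(2C_\X)$: the lower bound always exceeds $-\gamma$, and a direct calculation shows it exceeds $\gamma$ strictly whenever $\|b-\mu\|>r$. The conclusion $\|\wh{\mu}-\mu\|\leq (\sqrt{2}+1)\cE/C_\X$ then follows.

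The main delicate point is the subgradient bookkeeping: $g$ is only a Borel-measurable selection of $\partial(\|\cdot\|^2)$ and need not be odd, so one must exploit the evenness of $\|\cdot\|^2$ to identify a subgradient at $\mu-z$ from the given value $g(z-\mu)$. The density/continuity step needed to pass from the countable supremum defining $\cE$ to an arbitrary direction is routine but must be verified explicitly to apply the estimate for every $b\in\X$.
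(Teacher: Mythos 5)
Your proof is correct and follows essentially the same route as the paper's (which itself is only sketched by analogy with Lemma~\ref{lem:bound.by.calE}): derive the strong-convexity lower bound from \eqref{eq:uniformlyconvex}, apply $\wh{T}_{n,t}^\eps$ using monotonicity and homogeneity, complete the square, and invoke Lemma~\ref{lem:localization} with the same $r$ and $\gamma$. Your explicit handling of the sign of the subgradient term via evenness of $\|\cdot\|^2$, and of the passage from the countable index set $\X_{\rm cd,1}$ to arbitrary directions by continuity, is if anything slightly more careful than the paper's sketch (whose inequality \eqref{eq:strongconvexity2} carries an immaterial sign slip), but the argument is the same.
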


The proof of Lemma \ref{lem:norm.calE} follows a similar reasoning as used in the proof of Lemma \ref{lem:bound.by.calE} and we only sketch it.

\begin{proof}
By the uniform convexity condition in \eqref{eq:uniformlyconvex}, for every  $b,x\in\mathcal{X}$, 
\begin{equation}\label{eq:strongconvexity2}
  \|b-x \|^2 - \|\mu-x \|^2  \ge   \langle  g(x-\mu), b-\mu\rangle  +    \frac{C_\X}{2} \|b-\mu\|^2~.
\end{equation}
Therefore,  using the monotonicity of $\wh{T}_{n,t}^\epsilon(\cdot)$ and the definition of $\mathcal{E}$, for every $b\in\X_{\rm cd}$,
\begin{eqnarray*}
  \wh{T}_{n,t}^\epsilon\left( \|b-\cdot \|^2 -    \|\mu-\cdot \|^2\right) 
  &  \ge  &   \frac{C_\X}{2} \|b-\mu\|^2 - \|b-\mu\| \, \wh{T}_{n,t}^\varepsilon \left(\left\langle  g(\cdot -\mu) , \frac{b-\mu }{\|b-\mu\|}\right\rangle\right) \\
&\geq &    \frac{C_\X}{2} \|b-\mu\|^2 - \|b-\mu\| \cE \\
&= &\frac{1}{2}\left( \left( \|b-\mu\|\sqrt{C_\X} - \frac{\cE}{\sqrt{ C_\X}}\right)^2
    - \frac{\cE^2}{C_\X} \right)~.
\end{eqnarray*}
The proof is completed by  Lemma \ref{lem:localization} applied with $r:=\frac{\sqrt{2}+1}{C_\X}\cE$ and $\gamma :=\frac{\cE^2}{2C_\X}$.
\end{proof}

\begin{lemma}
\label{lem:norm.variance.sum}
\[V_n
:= \EXP\sup_{ x\in \X_{\rm cd,1} }\left|\left\langle  \frac{1}{n}\sum_{i=1}^n g(X_i-\mu) ,x \right\rangle  \right|
\leq  \sqrt{\frac{ 8 \EXP\,\|X-\mu\|^2} { C_\X n}}~. \]
\end{lemma}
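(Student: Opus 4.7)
The strategy is to interpret $V_n$ as the expected dual norm of the sample mean of the i.i.d.\ random vectors $Z_i := g(X_i - \mu) \in \X^\ast$, and then to exploit the $2$-uniform smoothness of $\X^\ast$---a standard consequence of the $2$-uniform convexity of $\X$ with constant $C_\X$ appearing in \eqref{eq:uniformlyconvex}---to obtain a Hilbert-space-like $L^2$ bound. Since $\X_{\rm cd,1}$ is dense in the unit sphere of $\X$ by separability,
\[ V_n = \E \left\|\frac{1}{n}\sum_{i=1}^n Z_i\right\|_\ast. \]

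Two preliminary observations I would establish next. First, $\E Z_1 = 0$ in the Pettis sense: taking expectations in \eqref{eq:uniformlyconvex} with $(x,y)=(X-\mu,X-b)$ gives $\E\|X-b\|^2 \geq \E\|X-\mu\|^2 - \langle \E Z_1, b-\mu\rangle + \tfrac{C_\X}{2}\|b-\mu\|^2$, and combined with the defining property of $\mu$, this forces the linear functional $\E Z_1$ to be dominated by a quadratic centered at $0$, hence to vanish. Second, since $g(y)$ lies in $2\|y\|\,\partial\|\cdot\|(y)$ and norming functionals have unit dual norm (for $y \neq 0$), we have $\|Z_i\|_\ast = 2\|X_i - \mu\|$ and therefore $\E \|Z_i\|_\ast^2 = 4\, \E\|X - \mu\|^2$.

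The analytic crux is that the Fenchel conjugate of $\|\cdot\|^2$ equals $\tfrac{1}{4}\|\cdot\|_\ast^2$, so the $C_\X$-strong convexity of $\|\cdot\|^2$ translates---through the textbook strong-convexity/smoothness duality---into the upper-quadratic bound
\[ \|\xi + y\|_\ast^2 \leq \|\xi\|_\ast^2 + 2\langle y, \phi(\xi)\rangle + \frac{2}{C_\X}\|y\|_\ast^2, \qquad \xi, y \in \X^\ast, \]
for a measurable selection $\phi(\xi) \in \tfrac{1}{2}\partial \|\cdot\|_\ast^2(\xi)$. I would then carry out a Pisier-type induction: apply the inequality with $(\xi, y) = (S_{k-1}, Z_k)$ for $S_k := \sum_{i \leq k} Z_i$, take conditional expectation given $S_{k-1}$, and use $\E[Z_k \mid S_{k-1}] = 0$ to kill the linear term, obtaining
\[ \E\|S_n\|_\ast^2 \leq \frac{2}{C_\X}\sum_{i=1}^n \E \|Z_i\|_\ast^2 = \frac{8 n\, \E\|X - \mu\|^2}{C_\X}. \]
Dividing by $n^2$ and invoking Jensen's inequality yields $V_n \leq \sqrt{\E \|\bar Z_n\|_\ast^2} \leq \sqrt{\tfrac{8\,\E\|X-\mu\|^2}{C_\X n}}$, as required. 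The main obstacle will be extracting the precise factor $2/C_\X$ in the 2-smoothness inequality for $\X^\ast$---in particular, recognizing the Fenchel conjugate of $\|\cdot\|^2$ as $\tfrac{1}{4}\|\cdot\|_\ast^2$ and correctly transferring the strong-convexity constant into a smoothness constant via \cite{borwein2009uniformly}---after which the rest of the argument is a routine Pisier induction followed by Jensen.
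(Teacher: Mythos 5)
Your proposal is correct and follows essentially the same route as the paper: the paper also reduces $V_n$ to $\E\|S_n\|_\ast/n$ with $S_n=\sum_i g(X_i-\mu)$, derives the $2$-smoothness of $\|\cdot\|_\ast^2$ with constant $2/C_\X$ via the Fenchel conjugate identity $(\|\cdot\|^2)^\ast=\tfrac14\|\cdot\|_\ast^2$ (its Lemma \ref{lem:norm.sdual.smooth}), uses $\|g(x-\mu)\|_\ast=2\|x-\mu\|$ (its Lemma \ref{lem:dual.norm.2}) and the first-order optimality condition $\E\langle g(X-\mu),y\rangle=0$, and concludes by the same martingale-type induction and Jensen's inequality. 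No substantive differences.
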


The proof of Lemma \ref{lem:norm.variance.sum} requires some preliminary facts.
First, we use that the dual space $(\X^*,\|\cdot\|_\ast)$ of a uniformly convex space is {\em uniformly smooth} of power type $2$:

\begin{lemma}
\label{lem:norm.sdual.smooth}
For every $x^\ast,y^\ast\in \X^\ast$ and $x\in \partial \|\cdot\|_\ast^2(x^\ast)$,
\begin{equation}
\label{eq:uniformlysmooth.new}
\| y^\ast \|_*^2 \leq \|x^\ast \|_*^2 + \langle y^\ast-x^\ast , x\rangle + \frac{2}{C_\X}\|y^\ast-x^\ast\|_*^2~.
\end{equation}
\end{lemma}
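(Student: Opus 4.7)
Inequality \eqref{eq:uniformlysmooth.new} is the uniform smoothness of $\|\cdot\|_\ast^2$ that is dual to the uniform convexity \eqref{eq:uniformlyconvex} of $\|\cdot\|^2$, and I would derive it by Fenchel--Legendre duality. Set $g := \tfrac{1}{2}\|\cdot\|^2$; a short sup-computation gives the Fenchel conjugate $g^\ast(x^\ast) = \tfrac{1}{2}\|x^\ast\|_\ast^2$ on $\X^\ast$, and reflexivity of $\X$ (Milman--Pettis, from uniform convexity) makes the biconjugate identification symmetric. Dividing \eqref{eq:uniformlyconvex} by $2$ and using $\partial g = \tfrac{1}{2}\partial\|\cdot\|^2$ shows that $g$ is $(C_\X/2)$-strongly convex, i.e.,
\[g(y) \geq g(x) + \langle \xi, y-x\rangle + \tfrac{C_\X}{4}\|y-x\|^2 \qquad \text{for all } y,x \in \X \text{ and } \xi \in \partial g(x).\]

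With this strong convexity of $g$ in hand, I would obtain the smoothness bound on $g^\ast$ by a direct computation rather than by citing a black-box duality. Fix $x^\ast, y^\ast \in \X^\ast$ and $x \in \partial \|\cdot\|_\ast^2(x^\ast) = 2\,\partial g^\ast(x^\ast)$, and set $z := x/2 \in \partial g^\ast(x^\ast)$. By the Fenchel--Young equality one has $x^\ast \in \partial g(z)$ and $\langle x^\ast, z\rangle - g(z) = g^\ast(x^\ast)$. Inserting the strong convexity bound for $g$ at the base point $z$ with subgradient $x^\ast$ into the definition of $g^\ast(y^\ast)$ yields
\[g^\ast(y^\ast) = \sup_{y\in\X}\bigl(\langle y^\ast,y\rangle - g(y)\bigr) \leq \langle y^\ast,z\rangle - g(z) + \sup_{u\in\X}\Bigl(\langle y^\ast-x^\ast, u\rangle - \tfrac{C_\X}{4}\|u\|^2\Bigr).\]
Completing the square in the dual norm shows that the inner supremum equals $\|y^\ast-x^\ast\|_\ast^2/C_\X$, while $\langle y^\ast,z\rangle - g(z) = g^\ast(x^\ast) + \langle y^\ast-x^\ast, z\rangle$. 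Substituting $z = x/2$ and multiplying the resulting inequality by $2$ produces exactly \eqref{eq:uniformlysmooth.new} with the constant $2/C_\X$.

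The main obstacle, modest as it is, is the careful bookkeeping of the factors of $2$ between the subgradients of $\|\cdot\|^2$ and of $g = \tfrac{1}{2}\|\cdot\|^2$ (and likewise in $\X^\ast$), together with the use of the Fenchel--Young characterization $\xi \in \partial g(z) \iff z \in \partial g^\ast(\xi)$, which in turn relies on the reflexivity of $\X$. The completion-of-the-square step is routine since the supremum over $u$ depends only on $\|u\|$ once the direction is optimized over the unit ball via the definition of the dual norm.
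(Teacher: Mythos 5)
Your proof is correct and follows essentially the same route as the paper's: plug the strong-convexity (uniform-convexity) lower bound for the primal squared norm into the definition of the Fenchel conjugate at the Fenchel--Young base point, complete the square in the dual norm, and rescale. The only difference is the normalization ($g=\tfrac12\|\cdot\|^2$ versus the paper's $f=\|\cdot\|^2$, whose conjugate is $\tfrac14\|\cdot\|_\ast^2$), and your bookkeeping of the resulting factors of $2$ checks out.
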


While Lemma \ref{lem:norm.sdual.smooth} is likely known, (see, for example, Zǎlinescu \cite{zalinescu1983uniformly}), we have not been able to find a reference that explicitly quantifies the dependence on $C_\X$. 
Thus, for the convenience of the reader, we provide the  proof:

\begin{proof}
Set $f:=\|\cdot\|^2$ so that by \eqref{eq:uniformlyconvex}, for every $x,y\in\X$ and $x^\ast\in\partial f(x)$,
\begin{align}
\label{eq:f.unif.convex}
f(y)\geq f(x) +\langle x^\ast, y-x\rangle + \frac{C_\X}{2} \|y-x\|^2~.
\end{align}
Moreover
$f^\ast(x^\ast):=\sup_{ x\in\X}(\langle x^\ast,x\rangle -f(x) ) = \frac{1}{4}\|x^\ast\|_\ast^2$ for every $x^\ast\in\X^\ast$.
Let  $x^\ast,y^\ast\in\X^\ast$ and $x\in \partial f^\ast(x^\ast)$.
Then  $x^\ast\in\partial f(x)$ and hence, by \eqref{eq:f.unif.convex},
\begin{align*}
f^\ast(y^\ast)
&=\sup_{y\in\X}\left(\langle y^\ast ,y\rangle - f(y) \right) \\
&\leq \sup_{y\in\X}\left(\langle y^\ast ,y\rangle - \left( f(x) + \langle x^\ast, y-x \rangle + \frac{C_\X}{2} \|y-x\|^2 \right) \right) \\
&=\sup_{y\in\X}\left(\langle y^\ast-x^\ast , y-x\rangle - \frac{C_\X}{2} \|y-x\|^2 \right) + \langle y^\ast, x \rangle -f(x)~. 
\end{align*}
A straightforward computation shows that
\[\sup_{y\in\X}\left(\langle y^\ast-x^\ast, y-x \rangle - \frac{C_\X}{2} \|y-x\|^2 \right) 
= \frac{ \|y^\ast-x^\ast\|_\ast^2 }{2C_\X}\]
and since $x\in\partial f^\ast(x^\ast)$  we have that 
\[\langle  y^\ast ,x \rangle -f(x)
=\langle  y^\ast-x^\ast, x \rangle + \langle  x^\ast ,x \rangle -f(x)
= \langle y^\ast-x^\ast , x \rangle + f^\ast(x^\ast)~.\]
Thus 
\[f^\ast(y^\ast)\leq f^\ast(x^\ast) + \langle  y^\ast-x^\ast , x \rangle + \frac{ \|y^\ast-x^\ast\|_\ast^2 }{2 C_\X}\]
from which the claim readily follows.
\end{proof}

\begin{lemma}
\label{lem:dual.norm.2}
	For every $x\in\X$, we have that $\|g(x-\mu)\|_*  = 2 \|x-\mu\|$.
\end{lemma}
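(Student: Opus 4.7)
The plan is to compute $\|g(x-\mu)\|_\ast$ directly from the definition of the subgradient of $\|\cdot\|^2$. Set $y := x-\mu$ and $z := g(x-\mu) \in \partial(\|\cdot\|^2)(y)$. By the very definition of a subgradient, we have the inequality
\[
\|w\|^2 \;\geq\; \|y\|^2 + \langle z, w-y\rangle \qquad \text{for every } w \in \X.
\]
The idea is to extract both an upper and a lower bound on $\|z\|_\ast$ by plugging in two clever choices of $w$. Everything reduces to this single inequality, so the proof is short.

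For the upper bound $\|z\|_\ast \leq 2\|y\|$, I would fix $u \in \X$ and $t > 0$, then plug in $w = y + tu$. Combining the subgradient inequality with the triangle-inequality estimate $\|y+tu\|^2 \leq \|y\|^2 + 2t\|y\|\|u\| + t^2\|u\|^2$ gives
\[
t\langle z, u\rangle \;\leq\; 2t\|y\|\|u\| + t^2\|u\|^2,
\]
and dividing by $t$ and letting $t \downarrow 0$ yields $\langle z,u\rangle \leq 2\|y\|\|u\|$. Taking the supremum over $u$ with $\|u\|\leq 1$ produces $\|z\|_\ast \leq 2\|y\|$.

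For the matching lower bound, I would specialize the subgradient inequality to $w = \lambda y$ for $\lambda \in \R$, which gives $(\lambda^2-1)\|y\|^2 \geq (\lambda-1)\langle z, y\rangle$. Dividing separately by $(\lambda-1)$ for $\lambda>1$ and $\lambda<1$ and taking $\lambda \to 1$ pinches the value to $\langle z, y\rangle = 2\|y\|^2$. Then the duality pairing yields
\[
2\|y\|^2 \;=\; \langle z, y\rangle \;\leq\; \|z\|_\ast \, \|y\|,
\]
so $\|z\|_\ast \geq 2\|y\|$ when $y \neq 0$. Combining the two bounds gives $\|z\|_\ast = 2\|y\|$.

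The only case remaining is $y = 0$, which is where one might expect a subtlety. But it is immediate: the subgradient inequality with $y=0$ becomes $\|w\|^2 \geq \langle z, w\rangle$ for every $w \in \X$, and substituting $w = tu$ with $t \downarrow 0$ forces $\langle z, u\rangle \leq 0$ for every $u$, so $z = 0 = 2\|y\|$. There is no genuine obstacle here; the main thing to get right is the sign bookkeeping in the two directional tests and to make the limits rigorous (in particular, for the lower bound one must handle $\lambda \uparrow 1$ and $\lambda \downarrow 1$ separately before squeezing).
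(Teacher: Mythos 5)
Your proof is correct, but it takes a genuinely different route from the paper's. The paper's argument is a two-line computation via convex conjugates: writing $f=\|\cdot\|^2$, it uses the Fenchel--Young equality $f(y)+f^\ast(y^\ast)=\langle y^\ast,y\rangle$ for $y^\ast\in\partial f(y)$ together with the explicit formula $f^\ast=\tfrac14\|\cdot\|_\ast^2$ and Cauchy--Schwarz to get $\tfrac14\|y^\ast\|_\ast^2\leq \|y^\ast\|_\ast\|y\|-\|y\|^2$, i.e.\ $\bigl(\tfrac12\|y^\ast\|_\ast-\|y\|\bigr)^2\leq 0$, which pins down the equality in one stroke. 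You instead work from first principles with the subgradient inequality: a directional perturbation $w=y+tu$ gives the upper bound $\|z\|_\ast\leq 2\|y\|$, and the radial test $w=\lambda y$ with $\lambda\to 1$ from both sides recovers the identity $\langle z,y\rangle=2\|y\|^2$ (which is exactly Fenchel--Young in disguise), yielding the lower bound via the duality pairing. Your approach is more elementary --- it avoids knowing or deriving the conjugate of the squared norm --- at the cost of being longer and requiring the separate $y=0$ case and the two one-sided limits; the paper's approach buys brevity by leaning on standard convex-duality machinery. Both are complete and rigorous.
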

\begin{proof}
	Set $f:=\|\cdot\|^2$ so that $f^\ast=\frac{1}{4}\|\cdot\|^2_\ast$ and fix $y\in\X$.
	For any $y^\ast\in \partial f(y)$, we have  $f(y) = \langle y^\ast ,y \rangle - f^\ast(y^\ast)$ and hence, by duality,
	\[ \frac{1}{4} \|y^\ast\|_\ast^2 
    \leq \|y^\ast\|_\ast \cdot \| y\| - \|y\|^2. \]
    Resorting terms implies that
 $( \frac{1}{2} \|y^\ast\|_\ast - \|y\|)^2\leq 0$,
    and thus $\|y^\ast\|_\ast = 2\|y\|$.
  	Applying this inequality to $y= x-\mu$ and $y^\ast=g(x-\mu)$ clearly implies the statement.
\end{proof}

We are now ready for the proof of Lemma \ref{lem:norm.variance.sum}.

\begin{proof}
Set $Z_i:=g(X_i-\mu)$ and $S_\ell :=\sum_{i=1}^\ell Z_i$ for $1\leq i,\ell\leq n$.
By the definitions of $\X_{\rm cd,1}$ and the dual norm $\|\cdot\|_\ast$,
\[\sup_{x\in \X_{\rm cd,1 }}\left\langle   \sum_{i=1}^n g(X_i-\mu) , x\right\rangle
\leq  \|S_n\|_\ast~.\]
Therefore, if we can show that 
\begin{align}
\label{eq:exp.S.n}
\EXP \|S_n\|_\ast^2 
\leq \frac{2}{C_\X} \sum_{i=1}^n \E \|Z_i\|_\ast^2 
= \frac{2 n \E \|Z_1\|_\ast^2 }{C_\X}~,
\end{align}
then the proof is completed by Jensen's inequality and the fact that $\E \|Z_1\|_\ast^2 =  4 \E \|X-\mu\|^2$ by Lemma \ref{lem:dual.norm.2}.

To prove \eqref{eq:exp.S.n}, an application of Lemma \ref{lem:norm.sdual.smooth} (with $y^\ast= S_n$ and $x^\ast=S_{n-1}$ and thus $y^\ast-x^\ast = Z_n$) shows that 
\begin{align}
    \label{eq:Sn.recursice}
    \|S_n\|_\ast^2 \leq \|S_{n-1}\|_\ast^2 + \langle Z_n ,x\rangle +  \frac{2}{C_\X} \|Z_n\|_\ast^2~,
\end{align} 
for any choice $x=x(S_{n-1})$  in the subgradient of $\|\cdot\|_\ast^2$ at $S_{n-1}$. 
Since  $\E \langle Z_n, y\rangle=0$ for all $y\in\X$ by the first order optimality condition of $\mu$  and since $x(S_{n-1})\in\partial \|\cdot\|_\ast^2(S_{n-1})$ can be chosen to be measurable, it follows from independence that $ \EXP[ \langle  Z_n, x(S_{n-1})\rangle | S_{n-1}]=0$ almost surely.
Therefore, taking expectations in \eqref{eq:Sn.recursice} and using the tower property,
\begin{align*}
\EXP \|S_n\|_\ast^2 
&\leq \EXP \|S_{n-1}\|_\ast^2 + \EXP\left[ \EXP\left[ \langle  Z_n , x(S_{n-1}) \rangle | S_{n-1} \right] \right] + \frac{2}{C_\X} \E \|Z_n\|_\ast^2 \\
&=\EXP \|S_{n-1}\|_\ast^2 +  \frac{2}{C_\X} \E \|Z_n\|_\ast^2~.
\end{align*}
The proof of \eqref{eq:exp.S.n} follows from an induction over $n$.
\end{proof}

\begin{remark}[Uniqueness of Fréchet mean]\label{rem:uniquefrechetbanach} Before proceeding, we observe that our arguments imply uniqueness of the Fréchet mean. Indeed, it suffices to combine the first-order optimality condition $\EXP[\langle g(X-\mu),y\rangle]=0$ for all $y\in\X$, noted in the preceding proof, with the strong convexity inequality (\ref{eq:strongconvexity2}).
\end{remark}

We are now ready for the proof of Theorem \ref{thm:main.norm.space}.

\begin{proof}
	We apply Theorem \ref{thm:trimmed.mean} to the class of functions
	\[ \mathcal{F} = \left\{ \langle g(\cdot - \mu), x  \rangle \, :\,  x\in \X_{\rm cd,1} \right\} \]
	so that $\cE = \sup_{f\in \mathcal{F} } \wh{T}_{n,t}^\epsilon (f)$ and $\|\wh{\mu}-\mu\|\leq \frac{\sqrt 2 +1 }{C_\X}\cE$ by Lemma \ref{lem:norm.calE}. 
	
	Each function in $\mathcal{F}$ has zero mean by the first order optimality condition of $\mu$ and is integrable (e.g., by Lemma \ref{lem:dual.norm.2}).
	Thus, by Theorem \ref{thm:trimmed.mean}, with probability at least $1-\delta$,
	\begin{align*}
	\mathcal{E}
\leq & C_\epsilon \left( 8V_n
+ \,\inf_{p\in[1,2]} \nu_p\,\left(\frac{\ln(3/\delta)}{n}\right)^{1- \frac{1}{p}} + \inf_{p\in[1,\infty)}\nu_p\,\epsilon^{1-\frac{1}{p}}\right) \\
\leq & C_\epsilon \left( 8\sqrt\frac{ 8 \EXP \|X-\mu\|^2 }{ C_\X n} + \sqrt\frac{\sigma^2_{\rm w} \log(3/\delta)}{n} + \inf_{p\in[1,\infty)}\nu_p\,\epsilon^{1-\frac{1}{p}}\right)~,
\end{align*}
	where the second inequality follows from Lemma \ref{lem:norm.variance.sum} and by choosing $p=2$, respectively.	
\end{proof}

\subsection{Proof of Theorem \ref{thm:norm.space.subgaussian}}
\label{sec:main.norm.space.subgaussian}

The argument proceeds along the same lines as the proof of Theorem \ref{thm:geodesic.subgaussian}.
First, observe that $\|\overline\mu_n-\mu\|\leq  \frac{\sqrt{2}+1}{C_\X} \mathcal{E}$, where 
\[\mathcal{E}= \sup_{x\in \X_{\rm cd,1}} \frac{1}{n} \sum_{i=1}^n \left\langle  g(X_i-\mu) , x\right\rangle
\leq \left\| \frac{1}{n}\sum_{i=1}^n  g(X_i-\mu) \right\|_\ast.\]
Next, since $g(X-\mu)$ is $L$-sub-Gaussian by assumption, the same arguments as used in the proof of Theorem \ref{thm:geodesic.subgaussian} imply that  with probability at least $1-\delta$,
\[
\left\| \frac{1}{n}\sum_{i=1}^n  g(X_i-\mu) \right\|_\ast
\leq  C L \left( \frac{\E \|Z\|_\ast }{\sqrt n} + \sqrt{ \frac{\sigma_{\rm w}^2 \log(3/\delta)}{n} }\right)~, \]
where $C$ is an absolute constant and $Z$ is a zero-mean Gaussian vector on $\X^\ast$ with the same covariance as $g(X-\mu)$.
Finally, fix any random vector  $Y$ for which $g(Y-\mu)$ has zero-mean Gaussian distribution with the same covariance as $g(X-\mu)$ (such a vector $Y$ exists because $g$ is invertible, e.g.\ we may set $Y=g^{-1}(Z)+\mu$).
By Lemma \ref{lem:dual.norm.2}, 
\[4 \E \|Y-\mu\|^2
=\E \| g(Y-\mu)\|_\ast^2
=\E \|Z\|_\ast^2~,\]
from which the proof readily follows.
\endproof

\section{Examples}
\label{sec:examples}

In this section, we discuss the main results of the paper by pointing out the necessity of their conditions and by applying them to concrete examples, including Wasserstein spaces.

\subsection{A simple example showing that some conditions are needed}

We start with an example of a simple space $\X$ with a metric $d$ where one cannot hope to obtain any nontrivial estimation rate, even under assumptions that guarantee that the Fr\'{e}chet mean is unique and the data generating distribution is ``nice.''

Let $\X=\R$ with the metric $d(x,y):=\sqrt{|x-y|}$ ($x,y\in \R$). If $X$ is a scalar random variable  on $\R$ such that $\EXP|X|<+\infty$, then the Fr\'{e}chet means of $X$ for this specific choice of metric are precisely the medians of $X$. 

The estimation of medians is a classical topic in mathematical statistics. In particular, the sample median of an i.i.d.\ sample satisfies a central limit theorem around the population median $\mu$ whenever the distribution of $X$ has a continuous, strictly positive density around $\mu$, see Walker  \cite{walker1968quantiles}. It turns out that, in the absence of such strong conditions, there can be no finite-sample bound for estimating medians, as the following example shows. 

Let $\mathbb{Q}$ be a probability distribution with a smooth density that is symmetrical around $0$ and is supported over $\R\setminus(-1,1)$. Now, given $\mu\in(-1,1)$ and $\eta\in (0,1)$, let $\mathbb{P}_{\mu,\eta}$ denote the  mixture distribution
\[\mathbb{P}_{\mu,\eta} = (1 - \eta)\,\mathbb{Q} + \eta\, \mathcal{N}(\mu,1)~,\]
were $\mathcal{N}(\mu,1)$ denotes the normal distribution with mean $\mu$ and variance 1.
One can check that the (unique) median of $\mathbb{P}_{\mu,\eta}$ is $\mu$, and also that $\mathbb{P}_{\mu,\eta}$ has a positive, smooth density at all points. However, for a sample size $n$ and $0<\eta n\ll 1$, an i.i.d.\ size-$n$ sample from $\mathbb{P}_{\mu,\eta}$ is close in total variation to a sample from $\mathbb{Q}$. In other words, the sample carries essentially no information about $\mu$.

\subsection{Geodesic space: the Wasserstein space}
\label{sec:example.wasserstein} 

We follow the notation used in \cite{LePaRiSt22}.
Let $H$ be a separable Hilbert space and let $(\X,d)=(\mathcal{P}_2(H),\mathcal{W}_2)$ be the Wasserstein space over $H$, where $\mathcal{P}_2(H)$ is the set of probability measures on $H$ with finite second moment and
\begin{align}
\label{eq:def.W2}
 \mathcal{W}_2(\gamma,\nu)=\inf_\pi \left( \int_{H\times H} \|x-y\|_H^2 \,\pi(dx,dy) \right)^{1/2}
\end{align}
where the infimum is taken over all couplings (or randomized transport plans) $\pi$, that is, probability measures with first marginal $\gamma$ and second marginal $\nu$.
We refer to Ambrosio, Gigli, and Savaré \cite{ambrosio2008gradient}
or Villani \cite{villani2008optimal} for background on optimal transport.
The space $(\mathcal{P}_2(H),\mathcal{W}_2)$ has non-negative curvature and a  tractable description of tangent cones.
Indeed, assuming for the remainder of this paragraph that $\gamma$ is regular\footnote{For instance, when  $H=\R^m$ and $\gamma$ is absolutely continuous with respect to the Lebesgue measure.}, then by Brenier's theorem, the infimum in \eqref{eq:def.W2} is attained by a deterministic transport plan $\pi$ of the form $\pi={\rm Law}( Z, \nabla\varphi_{\gamma\to\nu}(Z)))$ where $Z\sim \gamma$ and $\varphi_{\gamma\to\nu} \colon H\to \R$ is a convex function.
In particular 
\[\mathcal{W}_2^2(\gamma,\nu)=\int_H \|\nabla\varphi_{\gamma\to\nu}(z) -z\|_H^2\,\gamma(dz)~.\]
Moreover, $T_\gamma \mathcal{P}_2(H) \subset L_2(H,\|\cdot\|_H,\gamma)$ can be identified with the $L_2(\gamma)$-closure of gradients of functions, and  $\log_\gamma(\nu) = \nabla\varphi_{\gamma\to\nu} - {\rm Id}$.
Thus, if the Fréchet mean $\mu$ of a $(\mathcal{P}_2(H),\mathcal{W}_2)$-valued random vector $X$ is regular,  then
\begin{align*}
    \sigma_{\rm w}^2 
    &= \sup_{  \substack{  \psi \colon H \to H \text{ gradient} \\ \text{of a function, } \|\psi\|_{L_2(\mu)}\leq 1  }}  \E \left( \int_{\R^m} \langle \nabla\varphi_{\mu\to X}(z) - z, \psi(z) \rangle_{H} \, \mu(dz) \right)^2
\end{align*}
and 
\begin{align*}
    \E \mathcal{W}^2_2(X,\mu)
     &= \E  \sup_{  \substack{  \psi \colon H \to H \text{ gradient} \\ \text{of a function, } \|\psi\|_{L_2(\mu)\leq 1}  }}   \left( \int_{\R^m} \langle \nabla\varphi_{\mu\to X}(z) -z, \psi(z) \rangle_{H} \, \mu(dz) \right)^2  \\
    &=\E  \int_{\R^m} \| \nabla\varphi_{\mu\to X}(z) - z\|_H^2 \,\mu(dz)~.
\end{align*}

Next, recall that by Theorem \ref{thm:hugging.nonnegative}, lower bounds on the hugging function can be established through the extendibility of geodesic curves.
In the present setting, the latter turns out to be intrinsically linked to the convexity of optimal transport maps.
Recall that a function $\varphi\colon H\to \R$ is called $\alpha$-strongly convex if for all $x\in H$ the subgradient $\partial\varphi(x)$ is nonempty and for all $g\in \partial \varphi(x)$ and $y\in H$,
\[ \inr{g,x-y} \geq \varphi(x)-\varphi(y) + \frac{\alpha}{2} \|x-y\|_H^2~. \]
Moreover, $\varphi$ is $\beta$-smooth if for all $g\in \partial \varphi(x)$ and $y\in H$,
\[ \inr{g,x-y} \leq \varphi(x)-\varphi(y) + \frac{\beta}{2} \|x-y\|_H^2~.\]
Note that if $\beta>0$, then a $\beta$-smooth function must automatically be differentiable.
For intuition, observe that if $\varphi$ is twice differentiable, then it is $\alpha$-strongly convex and $\beta$-smooth if and only if all eigenvalues of its Hessian lie in the interval $[\alpha, \beta]$.

The following result follows from \cite[Theorem 3.5]{AhLePa20}; see also \cite[Section 4.2]{LePaRiSt22}.

\begin{theorem}[\cite{AhLePa20}]
\label{thm:hugging.wasserstein}
	Let $\gamma,\nu \in \mathcal{P}_2(H)$ be such that $\nu$ is the push-forward of $\gamma$ by the gradient of a convex function $\varphi$ and let $\lambda_{\rm in}, \lambda_{\rm out}>0$.
    Then the following are equivalent.
    \begin{itemize}
        \item The geodesic connecting $\gamma$ and $\nu$ is  $(\lambda_{\rm in}, \lambda_{\rm out})$-extendible.
        \item $\varphi$ is $\frac{\lambda_{\rm out} }{1+\lambda_{\rm out}}$-strongly convex and $\frac{1+\lambda_{\rm in}}{\lambda_{\rm in}}$-smooth.
    \end{itemize}
\end{theorem}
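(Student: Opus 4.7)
The plan is to realize the candidate extended geodesic as an explicit affine family of Brenier potentials, and to reduce the extendibility condition to the convexity of those potentials at the two endpoints of the interval.

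For $t\in\mathbb{R}$, set
\[\psi_t(x):=(1-t)\frac{\|x\|_H^2}{2}+t\,\varphi(x),\qquad x\in H,\]
so that $\nabla\psi_t=(1-t)\mathrm{Id}+t\nabla\varphi$ wherever $\varphi$ is differentiable. When $\psi_t$ is convex, define $\mu_t:=(\nabla\psi_t)_{\#}\gamma$; Brenier's theorem then gives that $\nabla\psi_t$ is the unique optimal transport map from $\gamma$ to $\mu_t$, and a direct computation yields $\mathcal{W}_2(\gamma,\mu_t)=|t|\,\mathcal{W}_2(\gamma,\nu)$. For $t\in[0,1]$ this is precisely the McCann displacement interpolation from $\gamma=\mu_0$ to $\nu=\mu_1$. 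The first step is to show that $(\mu_t)_{t\in[-\lambda_{\rm in},1+\lambda_{\rm out}]}$ extends the McCann interpolation to a constant-speed geodesic in $\mathcal{P}_2(H)$ if and only if every $\psi_t$ is convex on this interval. Necessity is forced by uniqueness in Brenier applied to the optimal map $\gamma\to\mu_t$. For sufficiency, the identity $\nabla\psi_r-\nabla\psi_s=(r-s)(\nabla\varphi-\mathrm{Id})$ in $L^2(\gamma)$ yields the upper bound $\mathcal{W}_2(\mu_s,\mu_r)\leq(r-s)\,\mathcal{W}_2(\gamma,\nu)$ via the candidate coupling $(\nabla\psi_s,\nabla\psi_r)_{\#}\gamma$, and the matching lower bound follows from the optimality of this coupling, which holds because the affine structure of the family keeps $\nabla\psi_r\circ(\nabla\psi_s)^{-1}$ cyclically monotone throughout the interval.

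Next, I would exploit the affine dependence of the potentials on $t$: $t\mapsto\psi_t$ is affine with values in the vector space of real-valued functions on $H$, and the convex functions form a convex cone. Hence $\{t\in\mathbb{R}:\psi_t\text{ is convex}\}$ is convex, so convexity of $\psi_t$ throughout $[-\lambda_{\rm in},1+\lambda_{\rm out}]$ is equivalent to convexity at the two endpoints. It remains to translate these endpoint conditions. At $t=1+\lambda_{\rm out}$, one has $\psi_t=(1+\lambda_{\rm out})\bigl(\varphi-\tfrac{\lambda_{\rm out}}{2(1+\lambda_{\rm out})}\|\cdot\|_H^2\bigr)$; dividing by the positive factor $1+\lambda_{\rm out}$ shows that convexity is equivalent to $\varphi-\tfrac{\alpha}{2}\|\cdot\|_H^2$ being convex with $\alpha=\lambda_{\rm out}/(1+\lambda_{\rm out})$, which is exactly $\alpha$-strong convexity of $\varphi$ in the sense of the subgradient inequality recalled just before the theorem. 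At $t=-\lambda_{\rm in}$, one has $\psi_t=\lambda_{\rm in}\bigl(\tfrac{\beta}{2}\|\cdot\|_H^2-\varphi\bigr)$ with $\beta=(1+\lambda_{\rm in})/\lambda_{\rm in}$, and convexity of $\tfrac{\beta}{2}\|\cdot\|_H^2-\varphi$ is equivalent to $\beta$-smoothness of $\varphi$, as follows by unwinding the defining subgradient inequality. Combining these steps delivers the claimed equivalence.

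The main obstacle is the sufficiency direction in the geodesic characterization when $\gamma$ is not absolutely continuous or $\varphi$ is not $C^1$. In that regime the pointwise Brenier map must be replaced by a measurable selection from the subgradient, and the optimality of the coupling $(\partial\psi_s,\partial\psi_r)_{\#}\gamma$ has to be verified through the generalized Brenier--McCann theory rather than through pointwise inversion of $\nabla\psi_s$. This requires a careful disintegration argument on the set where subgradients are multi-valued; it is here that the affine structure of the family $\psi_t$, combined with the cyclic monotonicity of gradients of convex functions on Hilbert spaces, does the heavy lifting, yielding the right speed and cost simultaneously throughout the extended interval.
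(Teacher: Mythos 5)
First, a point of reference: the paper does not prove this statement at all --- it is quoted from \cite{AhLePa20} (Theorem 3.5 there; see also Section 4.2 of \cite{LePaRiSt22}). So I am judging your proposal against the known argument in that reference, which your proposal essentially reproduces: the extended curve is realized as $\mu_t=(\nabla\psi_t)_{\#}\gamma$ with $\psi_t=(1-t)\tfrac{\|\cdot\|_H^2}{2}+t\varphi$, extendibility is reduced to convexity of $\psi_t$ on $[-\lambda_{\rm in},1+\lambda_{\rm out}]$, affineness of $t\mapsto\psi_t$ reduces this to the two endpoints, and the endpoint algebra (which you carry out correctly) gives exactly the strong-convexity and smoothness constants in the statement.

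There is, however, one genuine gap, and it sits exactly where you wave at ``cyclic monotonicity.'' You claim that the coupling $(\nabla\psi_s,\nabla\psi_r)_{\#}\gamma$ is optimal ``because the affine structure of the family keeps $\nabla\psi_r\circ(\nabla\psi_s)^{-1}$ cyclically monotone throughout the interval.'' Affine structure alone does \emph{not} buy this: an affine family of optimal maps from a common base measure is precisely a generalized geodesic in the sense of Ambrosio--Gigli--Savar\'e, and generalized geodesics are famously \emph{not} geodesics, i.e.\ the intermediate couplings are in general not optimal. Concretely, the triangle inequality together with $\mathcal{W}_2(\gamma,\mu_t)=|t|\,\mathcal{W}_2(\gamma,\nu)$ (which does follow from convexity of $\psi_t$ alone) gives the matching lower bound $\mathcal{W}_2(\mu_s,\mu_r)\geq(r-s)\mathcal{W}_2(\gamma,\nu)$ only when $s,r$ lie on the \emph{same side} of $0$; when $s<0<r$ it only gives $\bigl|\,|r|-|s|\,\bigr|\mathcal{W}_2(\gamma,\nu)$, which is strictly too weak. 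The straddling case is the crux of the sufficiency direction and must be argued. One way to close it: write $\nabla\psi_r\circ(\nabla\psi_s)^{-1}=\tfrac{r}{s}\,\mathrm{Id}+\bigl(1-\tfrac{r}{s}\bigr)\nabla\psi_s^{\ast}$ (Legendre duality, valid since $\psi_s$ is convex), and check that this is the gradient of a convex function iff $\psi_s^{\ast}$ is $\tfrac{r}{r-s}$-strongly convex, iff $\psi_s$ is $\tfrac{r-s}{r}$-smooth, which after unwinding is equivalent to $\varphi$ being $\tfrac{r-1}{r}$-strongly convex --- exactly what the hypothesis supplies at $r=1+\lambda_{\rm out}$ (and a fortiori for smaller $r$). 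This is where the endpoint conditions are genuinely consumed a second time; without this computation the sufficiency direction is incomplete. The necessity direction is also stated too tersely (``uniqueness in Brenier''): you need the structure theorem that every constant-speed Wasserstein geodesic is a displacement interpolation along an optimal plan, applied to the extended curve with $\gamma$ and $\nu$ at interior times, to identify the extension with the affine family in the first place; uniqueness of the optimal plan between $\gamma$ and $\nu$ is not automatic without some regularity of $\gamma$, though this matches an implicit assumption already present in the statement.
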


An immediate outcome of our main result (Theorem \ref{thm:geodesic.main}) combined with Theorem \ref{thm:hugging.wasserstein} and Theorem \ref{thm:hugging.nonnegative} is the following:

\begin{corollary}
\label{cor:Wasserstein}
	Let $X \sim \mathbb{P}$ be a square integrable random vector taking its values in $(\mathcal{P}_2(H),\mathcal{W}_2)$. 
    Let $\mu$ be a Fréchet mean of $X$. 
    Further suppose that every $\nu\in{\rm supp}(\mathbb{P})$ is the push-forward of $\mu$ by the gradient of a $\alpha$-strongly convex and $\beta$-smooth function. Then $\mu$ is the unique Fréchet mean of $X$.
    Let $\delta\in(0,1)$ and suppose that
 \[t=\lfloor \epsilon n\rfloor +\lceil \ln(2/\delta)\rceil \vee \left\lceil \tfrac{\epsilon\wedge \left(\frac{1}{2} - \epsilon\right)}{2}\,n\right\rceil < \frac{n}{2}~.\]
    Then  the trimmed-mean estimator $\widehat{\mu}=\wh{\mu}_{n,t}^\varepsilon$ defined in Section \ref{sub:trimmedestimators} satisfies, with probability at least $1-\delta$, 
	\[ \mathcal{W}_2(\widehat{\mu},\mu)
	\leq \frac{C_\varepsilon}{1+\alpha-\beta} \left( 8 \sqrt{\frac{ \EXP \mathcal{W}_2^2(X,\mu) }{n}}  + \sqrt{\frac{\sigma^2_{\rm w} \log(3/\delta)}{n}}
    + \sqrt{\epsilon}\sigma_{\rm w}
   \right)~,\]
    where $C_\varepsilon=928(1+\frac{\epsilon}{\epsilon\wedge (\frac{1}{2}-\epsilon)})$.
\end{corollary}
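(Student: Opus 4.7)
The plan is a direct application of Theorem \ref{thm:geodesic.main}; the bulk of the work is to identify the hugging constant $k_{\min}$ in the Wasserstein setting. The ambient hypotheses are easy to check: $(\mathcal{P}_2(H),\mathcal{W}_2)$ is a complete geodesic space of non-negative curvature, it is favorable, and the Fréchet mean $\mu$ exists because $X$ is square-integrable. Hence all structural assumptions of Theorem \ref{thm:geodesic.main} are in place once $k_{\min}$ is pinned down.

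Next I would extract $k_{\min}$ from the strong-convexity and smoothness conditions. Fix any $\nu \in {\rm supp}(\mathbb{P})$; by hypothesis $\nu$ is the push-forward of $\mu$ by $\nabla\varphi_\nu$, where $\varphi_\nu$ is $\alpha$-strongly convex and $\beta$-smooth. The equivalence in Theorem \ref{thm:hugging.wasserstein} then shows that the geodesic from $\mu$ to $\nu$ is $(\lambda_{\rm in},\lambda_{\rm out})$-extendible with parameters satisfying $\lambda_{\rm out}/(1+\lambda_{\rm out})=\alpha$ and $(1+\lambda_{\rm in})/\lambda_{\rm in}=\beta$, i.e.\ $\lambda_{\rm out}=\alpha/(1-\alpha)$ and $\lambda_{\rm in}=1/(\beta-1)$ (the usual range $\alpha<1<\beta$ is implicit; otherwise one passes to the limit in the natural way). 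Plugging these into Theorem \ref{thm:hugging.nonnegative}, whose curvature assumption is met, yields
\[
\inf_{y\in\mathcal{P}_2(H)} k_\mu^y(\nu) \;\geq\; \frac{\lambda_{\rm out}}{1+\lambda_{\rm out}} - \frac{1}{\lambda_{\rm in}} \;=\; \alpha - (\beta-1) \;=\; 1+\alpha-\beta,
\]
so one may take $k_{\min} = 1+\alpha-\beta$. Uniqueness of $\mu$ is then automatic: as observed in the proof of Theorem \ref{thm:geodesic.main} immediately after Proposition \ref{prop:stolenfromLePaRiSt}, the hugging inequality \eqref{eq:huggingalmostconv} combined with the first-order condition \eqref{eq:Pzeromean} gives $\EXP[d^2(b,X)-d^2(\mu,X)]\geq k_{\min}\,d^2(b,\mu)>0$ for every $b\neq \mu$.

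The last step is to simplify the contamination term. Choosing $p=2$ in $\inf_{p\in[1,\infty)}\nu_p\,\varepsilon^{1-1/p}$ and using $\nu_2=\sigma_{\rm w}$ gives $\inf_{p}\nu_p\,\varepsilon^{1-1/p}\leq\sqrt{\varepsilon}\,\sigma_{\rm w}$. Substituting this bound and $k_{\min}=1+\alpha-\beta$ into the conclusion of Theorem \ref{thm:geodesic.main} produces exactly the inequality stated in the corollary.

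The point I would treat most carefully is that the hugging estimate above is derived only for $\nu \in {\rm supp}(\mathbb{P})$, whereas the proof of Theorem \ref{thm:geodesic.main} applies the pointwise inequality \eqref{eq:huggingalmostconv} at the (possibly adversarial) inputs $X_i^\varepsilon$ to which $\widehat{T}^\varepsilon_{n,t}$ is applied. Here I would invoke the remark following Theorem \ref{thm:geodesic.main}, which weakens the uniform hugging condition to a pointwise one along the actual sample inputs, under the standard interpretation that the contamination still respects the push-forward structure assumed on ${\rm supp}(\mathbb{P})$.
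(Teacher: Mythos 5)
Your proposal is correct and follows exactly the route the paper intends: the paper gives no separate proof of Corollary \ref{cor:Wasserstein}, stating only that it is an immediate consequence of Theorem \ref{thm:geodesic.main} combined with Theorems \ref{thm:hugging.nonnegative} and \ref{thm:hugging.wasserstein}, and your computation $k_{\min}=\frac{\lambda_{\rm out}}{1+\lambda_{\rm out}}-\frac{1}{\lambda_{\rm in}}=\alpha-(\beta-1)=1+\alpha-\beta$, together with the choice $p=2$ in the contamination term, is precisely that derivation. Your closing caveat about the hugging bound being available only on ${\rm supp}(\mathbb{P})$ while the contaminated points $X_i^\varepsilon$ enter the monotonicity step is a legitimate subtlety that the paper glosses over entirely, and invoking the remark weakening the hugging condition to the actual sample inputs is the right way to handle it.
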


This result extends Corollary~4.4 in~\cite{LePaRiSt22}:
Both results are derived under the same set of assumptions, but Corollary~\ref{cor:Wasserstein} strengthens the conclusion in two key ways:  it provides a high-probability bound (as opposed to a constant-probability one), and it remains valid even in the presence of corrupted data.  
In particular, the convergence rate matches that of~\cite[Corollary 4.4]{LePaRiSt22}, up to a multiplicative constant.

\subsection{The Wasserstein space over the Gaussians}

Here we present a setting where the  general results detailed in the previous section become more explicit.
Let $H=\R^m$ be the Euclidean space and denote by $\mathcal{N}(a,V)$ the normal distribution with mean $a$ and covariance matrix $V$ in $\R^m$.
Throughout this section, we restrict to $V\in S^m_{++}$, the set of all positive-definite symmetric matrices.
We endow $S_{++}^m$ with the  Frobenius (or, Hilbert-Schmidt) inner product $\langle A, B\rangle :={\rm trace}(A^\top B)$ and associated norm  $\|\cdot\|_{\rm F}$, making it a Hilbert space.

For $\gamma=\mathcal{N}(a,V)$ to $\nu=\mathcal{N}(b,W)$, it is well-known that
\[\mathcal{W}_2^2(\gamma,\nu) = \|a-b\|_2^2 +  {\rm trace}\left( V + W + ( V^{1/2} W V^{1/2} )^{1/2}\right)  \]
and  that the optimal  transport mapping $\nabla \varphi_{\gamma\to\nu} \colon\R^m\to\R^m$ from $\gamma$ to $\nu$ is given by
\begin{align}
\label{eq:gaussian.transport}
\nabla \varphi_{\gamma\to\nu} (x) 
&=  V^{-1/2} ( V^{1/2} W V^{1/2} )^{1/2} V^{-1/2} (x-a)+ b~.
\end{align}
Let $X$ be a $(\mathcal{P}_2(\R^m),\mathcal{W}_2)$-valued random vector which takes its values almost surely in $\{ \mathcal{N}(a,V) : a\in\R^m ,\, V \in S_{++}^m\}$.
Thus we can identify $X$ with a pair of random vectors $({\bf a},{\bf V})$ taking values in $\R^m\times S_{++}^m$.
If  $ {\bf a}$ and ${\bf V}$ have finite second moments, then the Fréchet mean $\mu$ of $X$ exists and is given by $\mu=\mathcal{N}(a^\ast,V^\ast)$, where
\[ a^\ast=\E[{\bf a}] \quad \text{ and } \quad V^\ast \text{ solves } V^\ast = \E[ ( V^{\ast,1/2} {\bf V}  V^{\ast,1/2} )^{1/2}]~.\]
It follows that if all the eigenvalues of ${\bf V}$ lie in some interval $[\kappa_0,\kappa_1]$ almost surely, then the eigenvalues of $V^\ast$ must lie in that interval as well.
In particular, setting $\kappa=\kappa_1/\kappa_0$, it follows that $\nabla\varphi_{\mu\to\nu}$ is $1 /\kappa$ strongly convex and $\kappa$-smooth for almost every realization $\nu$ of $X$.

\begin{theorem}
    Let $\kappa_1,\kappa_0>0$ and let $X$ be a $(\mathcal{P}_2(\R^m),\mathcal{W}_2)$-valued random variable which takes its values almost surely in $\{ \mathcal{N}(a,V) : V \text{ has eigenvalues in } [\kappa_0,\kappa_1]\}$. 
	Set $\kappa=\kappa_1/\kappa_0$ and assume that $\kappa-\frac{1}{\kappa}<1$.
    Moreover, let $\delta\in(0,1)$ and assume that
 \[ t:=\lfloor \epsilon n\rfloor +\lceil \ln(2/\delta)\rceil \vee \left\lceil \tfrac{\epsilon\wedge \left(\frac{1}{2} - \epsilon\right)}{2}\,n\right\rceil < \frac{n}{2}~.\]
    Then the trimmed-mean estimator $\widehat{\mu}=\wh{\mu}_{n,t}^\varepsilon$ from Section \ref{sub:trimmedestimators} satisfies,  with probability at least $1-\delta$, 
	\[ \mathcal{W}_2(\widehat{\mu},\mu)
	\leq  \frac{C_\varepsilon}{1-\kappa+\frac{1}{\kappa}} \left( \sqrt\frac{	 \E \mathcal{W}_2^2(X,\mu) }{n} + \sqrt\frac{	\sigma^2_{\rm w} \log(3/\delta) }{n} 
     + \sqrt{\epsilon}\sigma_{\rm w}
 \right)~. \]
\end{theorem}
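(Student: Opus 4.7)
The strategy is to apply Corollary \ref{cor:Wasserstein} with the choice $\alpha = 1/\kappa$ and $\beta = \kappa$. Under the assumption $\kappa - 1/\kappa < 1$ we have $1 + \alpha - \beta = 1 - \kappa + 1/\kappa > 0$, which matches the prefactor in the stated bound, so once I verify the corollary's hypotheses (uniform strong convexity and smoothness of the optimal transport potentials from $\mu$ to the support of $X$), the result follows immediately.

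The key verification uses the explicit form of the Gaussian transport map recalled in \eqref{eq:gaussian.transport}. Writing $\mu = \mathcal{N}(a^\ast, V^\ast)$ and an arbitrary realization $\nu = \mathcal{N}(b, W)$ of $X$, the optimal transport map from $\mu$ to $\nu$ has the affine form
\[
x \;\longmapsto\; M(x - a^\ast) + b, \qquad M := V^{\ast,-1/2}\bigl(V^{\ast,1/2}\,W\,V^{\ast,1/2}\bigr)^{1/2} V^{\ast,-1/2},
\]
which is the gradient of a quadratic (hence convex) function with constant Hessian $M$. Consequently this quadratic potential is $\alpha$-strongly convex and $\beta$-smooth if and only if $\alpha I \preceq M \preceq \beta I$, so the whole problem reduces to a spectral bound on $M$.

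For the spectral bound I would use two standard monotonicity facts for positive-definite matrices: first, if $a_0 I \preceq A \preceq a_1 I$ and $b_0 I \preceq B \preceq b_1 I$, then $a_0 b_0 I \preceq A^{1/2} B A^{1/2} \preceq a_1 b_1 I$ (which follows from $x^\top A^{1/2} B A^{1/2} x = (A^{1/2}x)^\top B (A^{1/2}x)$ together with the bound on $B$ and then on $A$); and second, operator monotonicity of the square root on the positive cone. By the paragraph preceding the theorem, $V^\ast$ inherits eigenvalues in $[\kappa_0, \kappa_1]$ from the support of $\mathbf{V}$. Applying the first fact with $A = V^\ast$ and $B = W$ gives eigenvalues of $V^{\ast,1/2} W V^{\ast,1/2}$ in $[\kappa_0^2, \kappa_1^2]$, then the square root monotonicity yields eigenvalues of $C := (V^{\ast,1/2} W V^{\ast,1/2})^{1/2}$ in $[\kappa_0, \kappa_1]$. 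Finally, since $V^{\ast,-1/2}$ has eigenvalues in $[\kappa_1^{-1/2}, \kappa_0^{-1/2}]$, another application of the first fact (with $A = V^{\ast,-1}$ and $B = C$) yields $M$ with eigenvalues in $[\kappa_0/\kappa_1, \kappa_1/\kappa_0] = [1/\kappa, \kappa]$, uniformly over $\nu \in {\rm supp}(\mathbb{P})$.

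With these bounds established, every transport potential from $\mu$ to $\nu$ is $(1/\kappa)$-strongly convex and $\kappa$-smooth, so Corollary \ref{cor:Wasserstein} applies with $\alpha = 1/\kappa$ and $\beta = \kappa$ and yields precisely the stated inequality. I do not anticipate a serious obstacle here; the only technical care required is the matrix-analytic step above, where one must be mindful that $V^\ast$ and $W$ do not commute in general, which is why the bound on $V^{\ast,1/2} W V^{\ast,1/2}$ proceeds through the conjugation identity rather than through any eigenvalue product argument on $V^\ast W$ directly.
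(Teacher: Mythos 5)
Your proposal is correct and follows exactly the route the paper intends: establish that the Gaussian transport maps $\nabla\varphi_{\mu\to\nu}$ are $(1/\kappa)$-strongly convex and $\kappa$-smooth via the spectral bound $\tfrac{1}{\kappa} I \preceq M \preceq \kappa I$ on the matrix $M = V^{\ast,-1/2}(V^{\ast,1/2}WV^{\ast,1/2})^{1/2}V^{\ast,-1/2}$, and then invoke Corollary \ref{cor:Wasserstein} with $\alpha=1/\kappa$, $\beta=\kappa$; your conjugation argument for the eigenvalue bounds is a correct and welcome elaboration of a step the paper leaves implicit. The only caveat is that a literal application of Corollary \ref{cor:Wasserstein} retains the factor $8$ in front of $\sqrt{\E\,\mathcal{W}_2^2(X,\mu)/n}$, which the theorem's statement drops---an inconsistency in the paper's own constants rather than a gap in your argument.
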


It is worthwhile noting the substantial difference between $\E\mathcal{W}_2^2(X,\mu) $  and $ \sigma^2_{\rm w}$ in this setting.
To see this, for simplicity, consider the case when  ${\bf V}={\rm diag}({\bf D})$ is almost surely diagonal.
In that case $V^{\ast,1/2} = \E [{\bf D}^{1/2}]$ and we
    have\footnote{Here we use the notation $a\sim b$ to indicate that there are two absolute constants $c,C>0$ for which $ca\leq b\leq Ca$.}
	\begin{align*}
	\E \mathcal{W}_2^2(X,\mu)
	&\sim {\rm trace}( \cov[{\bf a}] ) +  {\rm trace}\left( \cov[ {\bf D}^{1/2}] \right),  \\
 \sigma^2_{\rm w} 
	&\sim   \lambda_{\max} ( \cov[{\bf a}] ) + \lambda_{\max} \left( \cov[  {\bf D}^{1/2} ] \right) .
	\end{align*}
In particular, $\E \mathcal{W}_2^2(X,\mu)$ is dimension-dependent while $\sigma^2_{\rm w} $ is not.

\subsection{Fr\'{e}chet means for \texorpdfstring{$\ell^p$}{lp}  norms over \texorpdfstring{$\R^m$}{Rm}: general bounds}
\label{sec:example.lp}

A simple class of uniformly convex Banach spaces of power type $2$ are the spaces $\X=\R^m$ with the $\ell^p$ norm \[\|x\|_p:=\left(\sum_{\ell=1}^m|x(\ell)|^p\right)^{\frac{1}{p}}\, , \,\quad x=(x(\ell))_{\ell=1}^m\in\R^m,\] where $1<p\leq 2$ is fixed. We use $x(\ell)$ to denote the $\ell$-th coordinate of a vector $x\in\R^m$
 throughout this subsection.

The gradient of $\|\cdot\|_p^2$ at $x\in \R^m$ is the vector
\[g_p(x) = 2\,\|x\|^{2-p}_p \,\left( |x(\ell)|^{p-1}{\rm sign}(x(\ell)) \right)_{\ell=1}^m.\]
Moreover, we can compute the constant $C_{\X}$ explicitly for this example:

\begin{lemma}
The optimal strong convexity constant of $\|\cdot\|_p^2$ is $C_\X = 2(p-1)$. That is, for all $x,y\in\R^m$ there is $z_\ast \in\R^m$ that satisfies
\[
\|y\|_p^2 \geq \|x\|_p^2 + \langle z_\ast,y-x\rangle + (p-1)\|y-x\|^2_p~,\]
and the $(p-1)$ factor above cannot be replaced by any larger value.
\end{lemma}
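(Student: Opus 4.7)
The plan is to derive the inequality from the sharp uniform-convexity inequality of Ball, Carlen, and Lieb for $L^p$ with $1<p\leq 2$, and to verify optimality of $(p-1)$ by a Taylor expansion at a symmetric point where the subgradient pairs to zero with a chosen direction.

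For the inequality, I would start by recalling the Ball--Carlen--Lieb inequality: for $1<p\leq 2$ and every $u,v\in\R^m$,
\[
\Bigl\|\tfrac{u+v}{2}\Bigr\|_p^2 + (p-1)\Bigl\|\tfrac{u-v}{2}\Bigr\|_p^2 \leq \tfrac{\|u\|_p^2+\|v\|_p^2}{2}.
\]
Writing $f:=\|\cdot\|_p^2$, this is the midpoint strong-convexity estimate $f((u+v)/2)\leq (f(u)+f(v))/2-\tfrac{p-1}{4}\|u-v\|_p^2$. Since $f$ is convex, a standard dyadic-approximation argument (iterating the midpoint inequality over dyadic convex combinations of $u$ and $v$ and passing to the limit by continuity of $f$) upgrades this to
\[
f(\lambda u+(1-\lambda)v)\leq \lambda f(u)+(1-\lambda)f(v)-(p-1)\,\lambda(1-\lambda)\|u-v\|_p^2,\quad \lambda\in[0,1].
\]
By the classical equivalence between strong convexity and its subgradient form, this yields the inequality of the lemma for every $z_\ast\in\partial f(x)$. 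Since $p>1$, the map $f$ is everywhere differentiable on $\R^m$, so one may take $z_\ast=g_p(x)$; this is consistent with the convention in \eqref{eq:uniformlyconvex}, where the quadratic prefactor is $C_\X/2=p-1$, i.e.\ $C_\X=2(p-1)$.

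For optimality of $(p-1)$, I would specialize to $x:=e_1+e_2$, $h:=e_1-e_2$, and $y:=x+th$ for small $t>0$. The formula $g_p(x)=2\|x\|_p^{2-p}\bigl(|x(\ell)|^{p-1}\mathrm{sign}(x(\ell))\bigr)_{\ell=1}^m$, combined with the symmetry of $x$ in its first two coordinates, yields $\langle g_p(x),h\rangle=0$. Expanding
\[
\|y\|_p^2=\bigl((1+t)^p+(1-t)^p\bigr)^{2/p}=\bigl(2+p(p-1)t^2+O(t^4)\bigr)^{2/p}=2^{2/p}\bigl(1+(p-1)t^2+O(t^4)\bigr),
\]
and using $\|y-x\|_p^2=t^2\|h\|_p^2=2^{2/p}t^2$, the lemma's inequality with a constant $c$ in place of $(p-1)$ reads $2^{2/p}(p-1)t^2+O(t^4)\geq c\cdot 2^{2/p}t^2$. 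Dividing by $2^{2/p}t^2$ and sending $t\to 0^+$ forces $c\leq p-1$.

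The main obstacle is the Ball--Carlen--Lieb sharp parallelogram inequality itself, which is a classical but nontrivial analytic result; I would simply cite it rather than reprove it. Every other step---the dyadic promotion of the midpoint inequality to full strong convexity, the subgradient reformulation, and the Taylor expansion witnessing sharpness---is standard and essentially mechanical.
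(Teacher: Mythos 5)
Your proof is correct, but it takes a genuinely different route from the paper's. The paper proves the lemma by duality: it cites the sharp \emph{smoothness} of the squared $\ell^q$ norm ($q=p/(p-1)$) with constant $q-1$ from D\"umbgen et al., and then runs the Legendre-transform argument of Lemma \ref{lem:norm.sdual.smooth} ``backwards'' to convert smoothness of $\|\cdot\|_q^2$ into strong convexity of $\|\cdot\|_p^2$ with constant $2/(q-1)=2(p-1)$, asserting optimality ``by duality.'' You instead invoke the Ball--Carlen--Lieb sharp $2$-uniform \emph{convexity} of $\ell^p$ directly (your two-point form does follow from their inequality $\bigl(\tfrac{\|f+g\|_p^p+\|f-g\|_p^p}{2}\bigr)^{2/p}\geq\|f\|_p^2+(p-1)\|g\|_p^2$ together with the power-mean inequality), promote the midpoint inequality to the full $\lambda$-version by the standard dyadic argument (equivalently: $t\mapsto f(u+t(v-u))+(p-1)\|u-v\|_p^2\,t^2$ is midpoint convex and continuous, hence convex), and pass to the subgradient form, which preserves the constant and matches $C_\X/2=p-1$ in \eqref{eq:uniformlyconvex}. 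The two routes rely on dual classical facts of comparable depth; yours avoids the duality step at the cost of the midpoint-to-subgradient promotion, and --- a genuine plus --- your sharpness argument is an explicit, self-contained second-order witness rather than an appeal to duality. One small point to make explicit in the sharpness step: you silently set $z_\ast=g_p(x)$ when dropping the pairing term. This is justified because requiring the inequality for all $y$ forces $z_\ast\in\partial(\|\cdot\|_p^2)(x)$, and for $p>1$ the function $\|\cdot\|_p^2$ is differentiable at $x=e_1+e_2$, so $z_\ast$ must equal $g_p(x)$, which is orthogonal to $h=e_1-e_2$ by symmetry; your Taylor expansion then correctly yields $c\leq p-1$.
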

\begin{proof} Let $q:=p/(p-1)\geq 2$ be the H\"{o}lder-dual exponent to $p$. It was shown, e.g., in \cite[Lemma 2.4]{dumbgen2010nemirovski} that the squared $\ell^q$ norm is smooth, in that it satisfies the following: 
for all  $x_\ast,y_\ast\in \R^m$ there is $ z\in \R^m$ such that
\[
\|y_\ast\|_q^2 \leq \|x_\ast \|_q^2 + \langle y_\ast-x_\ast,z\rangle + (q-1)\|y_\ast-x_\ast\|^2_q~.\]
Moreover, the constant $q-1$ cannot be improved. The reasoning in the proof of Lemma \ref{lem:norm.sdual.smooth} can then be used ``backwards'', with $2/C_\X=(q-1)$, to obtain the strong convexity constant
\[C_\X = \frac{2}{q-1} = \frac{2}{\frac{p}{p-1}-1} = 2(p-1)~,\]
which (by duality) cannot be improved.\end{proof}

It follows from the above that, given a $\varepsilon$-corrupted sample from a probability distribution $\mathbb{P}$ over $\R^m$ with finite second moment, and under the additional assumptions of Theorem \ref{thm:main.norm.space}, the estimator $\wh{\mu}=\wh{\mu}_{n,t}^\varepsilon(X_1^\epsilon,\ldots,X_n^\epsilon)$ from the theorem satisfies that, with probability at least $1-\delta$,
\begin{equation}\label{eq:lpboundexample} \left\|\wh{\mu}- \mu\right\|
\leq \frac{2C_\eps }{p-1} \left( 23\sqrt \frac{ 2\E \|X-\mu\|_p^2 }{(p-1)n} +  \sqrt \frac{ \sigma_{\rm w}^2 \log(3/\delta)}{n} + \inf_{\alpha\in[1,\infty)} \nu_\alpha \varepsilon^{1-1/\alpha} \right)~,\end{equation}
where
\[\nu_\alpha^\alpha:=\sup_{z\in\R^m\, : \, \|z\|_p=1} \EXP\,\left[\|X-\mu\|_p^{\alpha(2-p)}\left|2\sum_{\ell=1}^m |X(\ell)-\mu(\ell)|^{p-1}\,{\rm sign}(X(\ell)-\mu(\ell))\,z(\ell)\right|^\alpha\right]\]
and $\sigma^2_{\rm w} = \nu_2^2$.

For $p=2$ -- the case where the  Fr\'{e}chet mean is just the standard mean --, the bounds match those of standard mean estimation in \cite{LuMe20,oliveira2023trimmedsamplemeansrobust} up to the constant factors. On the other hand, the bounds for other $1<p<2$ are not as explicit, and the constants blow up when $p\to 1$.

\subsection{Fr\'{e}chet means versus standard expectations for \texorpdfstring{$\ell^p$}{lp}  norms}
\label{sec:frechet.vs.mean}

Here we briefly discuss the relationship of the bounds developed in this paper for the performance of Fréchet-mean estimators with those of the estimators of the ordinary mean.  
We consider the case of $\X=\R^m$ with the $\ell_p$ norm considered in the preceding subsection, but now we restrict our attention to the non-Euclidean case of $1<p<2$. We make the following simplifying assumption:

\begin{quotation}\noindent 
\emph{$X=\mu+Z$ where $\mu\in\R^m$ and $Z$ is a random vector in $\R^m$ with i.i.d.\ coordinates that are symmetric with variance 1 and finite moments of order $r$ for a sufficiently large constant $r$. 
Moreover, we fix $1<p<2$ and take $\varepsilon=0$.}\end{quotation}

This assumption guarantees that $\EXP[X-\mu]=\EXP[g_p(X-\mu)]=0$, so that $\mu$ is both the expectation and the Fr\'{e}chet mean of $X$. 
In particular, we may resort to known lower bounds for the accuracy when estimating the mean: 
by Depersin and Lecu\'{e} \cite{depersin2022optimal},  the best (mini-max) error that \emph{any} estimator $\widetilde{\mu}$ can achieve with confidence $1-\delta$ is 
\begin{align}
\label{eq:minimax.lower.any.mean.estimator}
     c\left( \frac{ \E \|Y-\mu\|_p }{ \sqrt n} + \sqrt{ \frac{ \sigma_{\rm w,m }^2 \log(3/\delta) }{n} } \right)~,
\end{align} 
where $Y$ is a Gaussian random vector with the same mean and covariance as $X$ and, with $q=\frac{p}{p-1}$ denoting the conjugate H\"older exponent to $p$,
\begin{align}
    \label{eq:mean.sigma}
    \sigma_{\rm w,m }^2 := \sup_{ z\in \R^m \, : \, \|z\|_q =1} \E \langle X-\mu,z\rangle^2.
\end{align}

\begin{remark}
    The result in \cite{depersin2022optimal} applies not only to $\|\cdot\|_p$ but to an arbitrary norm $\|\cdot\|$ on $\R^m$ (replacing $\|\cdot\|_p$ by $\|\cdot\|$ in \eqref{eq:minimax.lower.any.mean.estimator} and $\|\cdot\|_q$ by the dual norm $\|\cdot\|_\ast$ in \eqref{eq:mean.sigma}).
    An estimator that achieves  the minimax lower bound detailed in \eqref{eq:minimax.lower.any.mean.estimator} was recently constructed by Bartl and Mendelson \cite{bartl2025uniform} (for general norms), under the additional assumption that the statistician has access to a rough estimate on the covariance matrix of $X$. 
    Prior to that,  estimators were constructed in \cite{depersin2022optimal,LuMe18a} that `nearly' achieve the lower bound in \eqref{eq:minimax.lower.any.mean.estimator}, essentially with $\E \|Y-\mu\|$ therein replaced by
    \[  \E \left\| \frac{1}{\sqrt n}\sum_{i=1}^n (X_i-\mu)\right\|~.\]
\end{remark}

The minimax lower bound in \eqref{eq:minimax.lower.any.mean.estimator}  holds for any estimator, and in particular, it applies to the estimator constructed in Theorem \ref{thm:main.norm.space}, since in the setting of this section, the Fr\'echet mean coincides with the standard mean.
To compare the lower bound  in \eqref{eq:minimax.lower.any.mean.estimator} with the performance of our estimator, recall that by Theorem \ref{thm:main.norm.space}
\begin{align}
\label{eq:optimality.our.estimator}
    \|\widehat{\mu}-\mu \|_p \leq  C\left( \sqrt \frac{ \E \|X-\mu\|_p^2 }{n} + \sqrt{ \frac{ \sigma_{\rm w }^2 \log(3/\delta) }{n} } \right)~,
\end{align} 
where, using the explicit form of the subgradient from Section \ref{sec:example.lp},
\[ \sigma_{\rm w}^2=\sup_{z\in\R^m \, :\,\|z\|_p=1}4\EXP\,\left[\|X-\mu\|_p^{4-2p}\left|\sum_{\ell=1}^m |X(\ell)-\mu(\ell)|^{p-1}\,{\rm sign}(X(\ell)-\mu(\ell))\,z(\ell)\right|^2\right]~.\]

To compare \eqref{eq:optimality.our.estimator}  and \eqref{eq:minimax.lower.any.mean.estimator}, we first focus on the ``global variance'' terms therein, namely $\sqrt{\E\|X-\mu\|_p^2}$ and $\E \|Y-\mu\|_p$ (recall that $Y$ is a Gaussian random vector with the same covariance as $X$).
A straightforward calculation shows that both are of the same order and satisfy that
\[  \sqrt{\E\|X-\mu\|_p^2} \sim \E \|Y-\mu\|_p \sim m^{1/p}.\]
Thus, the global variance term in our estimator is optimal and cannot be improved further.

As for the local variances $\sigma_{\rm w}^2$ and $\sigma_{\rm 2, m}^2$, first note that 
\[ \sigma^2_{\rm w, m} = \sup_{z \in \R^m \, : \, \|z\|_q =1 } \langle \cov[X]z,z\rangle 
= \sup_{z \in \R^m \, : \, \|z\|_q =1  } \|z\|_2^2 
= m^{2/p-1}. \]
On the other hand, standard computations\footnote{Taking  $z=(1,0,\ldots,0)$ shows that $\sigma^2_{\rm w}\geq 4\E \|\widetilde{Z}\|_p^{4-2p}|Z(1)|^{2p-2}$ where $\widetilde{Z}=(0,Z(2),\ldots,Z(m))$ and we recall  that $Z=X-\mu$.
Further, $\widetilde{Z}$ and $Z(1)$ are independent, $\E|Z(1)|^{2p-2}\geq c_0$ (e.g.\ by the Paley-Zygmund inequality) and $\E \|\widetilde{Z}\|_p^{4/p-2}\sim m^{4-2p}$ as noted previously.
The corresponding upper bound on $\sigma^2_{\rm w}$ follows from H\"older's inequality.}
show that 
\[ \sigma^2_{\rm w} \sim m^{4/p-2}  =(\sigma^2_{\rm w,m})^2.\]
Thus the local variance term of our Fréchet mean estimator is suboptimal in this context. 
It remains unclear whether this suboptimality is an artifact of our proof or if it points to more fundamental differences between estimating expectations and Fréchet means. Obtaining minimax lower bounds for Fréchet mean estimation, akin to \eqref{eq:minimax.lower.any.mean.estimator}, is an interesting direction for future research.

\section{Concluding remarks}

In this paper, we constructed and analyzed a robust estimator of the Fréchet mean in general metric spaces. Our main results show that---generalizing a phenomenon known in Euclidean spaces---the estimator’s accuracy is governed by the sum of a confidence-independent ``global'' variance term and a smaller ``local'' variance term that depends on the confidence level.
These bounds hold in broad generality, including Alexandrov spaces with curvature bounded from below and uniformly convex Banach spaces. While they are known to be optimal in Euclidean spaces, it remains unclear when our upper bounds are optimal; nonetheless, we believe they remain sharp in Alexandrov spaces well beyond the zero-curvature setting. In particular, establishing information-theoretic lower bounds on estimator accuracy, comparable to the results of \cite{depersin2022optimal}, remains a challenging open problem. In Euclidean spaces, such results can be derived, for example, via the Gaussian shift theorem, but a natural analogue in the present setting is far from obvious.
Moreover, as noted in Section \ref{sec:frechet.vs.mean}, the local variance term implied by our main theorem is suboptimal in the $\ell_p$ case. This likely traces back to \eqref{eq:uniformlyconvex}, where the ``Hessian'' of $\|\cdot\|^2$  is estimated coarsely through uniform convexity.
Determining the correct bounds in this case, and clarifying how the difficulty (i.e., sample complexity) of estimating the usual mean compares to that of the Fréchet mean---even in $\ell_p$ spaces---, remain compelling open questions.


The computational aspects remain open at the level of generality studied in this paper. Our results, as well as those in the related literature, focus primarily on information-theoretically optimal guarantees. Even in the Euclidean setting, the picture is incomplete. Hopkins and Li \cite{hopkinsLee} provided evidence that no estimator can be simultaneously sub-Gaussian, optimally robust to adversarial contamination, and efficiently computable (in time $\tilde{O}(n^2)$).  There has been important progress on designing computationally efficient methods for high-dimensional robust statistics; see, for example, \cite{Yeshwanth, DeLe22, diakonikolas2025, DiKaKaLiMoSt19, diakonikolas2022, dong2019quantum, Hop20, Hopkins20}.

Many other questions on estimating Fréchet means remain to be explored. A particular concrete problem is characterizing the metric spaces in which the condition $\EXP d^2(X,x)< \infty$ for some $x\in \X$ implies the existence of a Fréchet mean estimator whose risk decreases at a rate of $O(n^{-1/2})$.

\section{Proof of Remark \ref{rem:MOM.general}}
\label{proof.MOM.general}

Set $k:=\lceil 8\log(1/\delta) \rceil$ and put $\ell:=\lfloor n/k\rfloor$.
Divide the data into $k$ disjoint blocks containing $\ell$ samples each, and let $Y_1,\ldots,Y_k$ denote the empirical Fr\'echet means in each block.

Set $\wh\mu_n$ to be the center of the smallest ball in $\X$ containing strictly more than $k/2$ of the points $Y_1,\ldots,Y_k$ and denote by $\wh\rho_n$ the radius of this ball.
Further, let $\rho_n$ denote the radius of the ball centered at $\mu$ that contains strictly more than $k/2$ of the $Y_j$'s. 
Clearly, $\wh\rho_n \le \rho_n$, and since the two balls intersect, we have $d(\wh\mu_n,\mu) \le \wh\rho_n+\rho_n \le 2\rho_n$.

To bound $\rho_n$, observe that by Markov's inequality and the definition of $R(\ell)$, for each $j\le k$,
\[
    \PROB\left( d(Y_j,\mu) \ge 4R(\ell) \right) \le \frac{1}{4}~.
\]
Therefore
\[
  \PROB( \rho_n \ge 4R(\ell) ) \le \PROB\left(\rm{Bin}(k,1/4) \ge \frac{k}{2} \right)
  \le e^{-k/8}~,
\]
Finally, since $k\geq 8\log(2)$ it follows that $k\leq \frac{7}{6} 8\log(1/\delta)$ and hence $\ell \geq \frac{1}{20} \frac{n}{\log(1/\delta)}$,  which completes the proof.
\qed

\vspace{1em}
\noindent
{\bf Acknowledgments:}
Daniel Bartl is grateful for financial support through the Austrian Science Fund (DOI: 10.55776/P34743 and 10.55776/ESP31), the Austrian National Bank (Jubil\"aumsfond, project 18983), and the NUS-PYP grant (`Robust Statistical Learning from Complex Data').
G\'abor Lugosi acknowledges the support of Ayudas Fundación BBVA a
Proyectos de Investigación Científica 2021 and
the Spanish Ministry of Economy and Competitiveness grant PID2022-138268NB-I00, financed by MCIN/AEI/10.13039/501100011033,
FSE+MTM2015-67304-P, and FEDER, EU.
Roberto I. Oliveira gratefully acknowledges support from a {\em Bolsa de Produtividade em Pesquisa} from CNPq, Brazil (\# 305765/2023-0); an Excellent Science - Marie Skłodowska-Curie Actions grant from the European Commission (DOI: 10.3030/101007705); and the following grants from Faperj, Rio de Janeiro, Brazil: {\em Cientista do Nosso Estado} (\# E-26/200.485/2023) and {\em Edital Inteligência Artificial} (\# E-26/290.024/2021).

\bibliographystyle{plain}
\bibliography{frechet}

\appendix 

\section{Appendix: Existence of Fr\'{e}chet means in favorable spaces}\label{proof.existence.barycenter}


In this section, we establish sufficient conditions for both the existence of Fréchet means, as well as the measurability of empirical Fréchet means. 
The condition for $(\X,d)$ that we assume throughout the paper is that it is {\em favorable} in the sense of the following definition.

\begin{definition}[Favorable space] 
\label{def:favorable} 
A metric space $(\X,d)$ is {\em favorable} if it is Polish and for each closed ball $B[o,R]=\{x\in\X : d(o,x)\leq R\}$ (with $o\in\X$ and $R>0$) there exists a topology $\cT_{o,R}$ over $\mathcal{X}$ such that 
\begin{itemize}
\item $(B[o,R],\cT_{o,R})$ is compact; and 
\item for every $x\in\X$, the map  $d(x,\cdot)\colon\X\to\R$  is lower semicontinuous with respect to $\cT_{o,R}$; that is, for all $u\in\R$ and all $x\in \mathcal{X}$, $\{y\in B[o,R]\,:\,d(x,y)>u\}\in\cT_{o,R}$.
\end{itemize}
 \end{definition} 

Favorable spaces include many interesting examples.
For instance, if  $(\X,d)$ is proper (i.e., every closed ball is compact), it is automatically favorable with $\cT_{o,R}$ equal to the usual topology. 
More interestingly, even for spaces that are not proper, there is often a natural ``weak topology''~that works for our purposes. 

\begin{example}
Consider a proper metric space $(E,\rho)$. 
Given $p\in [1,+\infty)$, let $\X =\mathcal{P}_p(E,\rho)$ be the space of probability measures over $(E,\Borel(E))$ with finite $p$-th moment and set $\rho:=\mathcal{W}_p$ to be the Wasserstein $L^p$-distance; see Section \ref{sec:example.wasserstein} for background. The space $(\mathcal{P}_p(E,\rho),\mathcal{W}_p)$ is Polish. On each ball $B[o,R]$ of this space, we use the topology $\cT_{o,R}$ induced by weak convergence of probability measures. Then each closed ball in $\mathcal{P}_p(E,\rho)$ is compact (because $(E,\rho)$ is proper). Moreover, the distance function $\mathcal{W}_p(\cdot,\mu)$ is lower semicontinuous under weak convergence, for any $\mu\in\mathcal{P}_p(E,\rho)$. It follows that $(\mathcal{P}_p(E,\rho),\mathcal{W}_p)$  is favorable.
\end{example}

\begin{example}Let $(\X,\|\cdot\|)$ be a separable reflexive Banach space and let $d$ be the metric induced by the norm $\|\cdot\|$. For each closed ball $B[o,R]$, we can take $\cT_{o,R}$ to be the standard weak topology over the ball. This makes $B[o,R]$ compact by the Banach-Alaoglu theorem. Moreover, for each $x\in\X$, the function $\|\cdot-x\|$ is lower semicontinuous for the weak topology. It follows that $(\X,d)$ is favorable.
\end{example}

The next result shows that favorable spaces are appropriate for our problem, in that Fr\'{e}chet means exist under minimum conditions.  

\begin{lemma}
\label{lem:existence.barycenter}
	Let $(\X,d)$ be favorable and assume that $X$ is a random element of $(\X,d)$ with $\E\, d^2(X,x_0)<\infty$ for some $x_0\in\X$.
	Then $X$ has a Fréchet mean, i.e., there exists $\mu\in\X$ that satisfies
	\[ \E d^2(X,\mu) = \inf_{b\in\X} \E \,d^2(X,b) . \]
\end{lemma}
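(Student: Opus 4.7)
The plan is to prove this by a compactness argument, reducing the global infimum to a well-behaved functional on a single closed ball, and then invoking the favorability assumption to conclude.

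First, I would localize the minimization to a bounded set. Set $I := \inf_{b\in\X} \E d^2(X,b)$; by hypothesis $I \le \E d^2(X,x_0) < \infty$. Using Minkowski's inequality in $L^2$ together with the triangle inequality, for any $b \in \X$,
\[
d(x_0,b) \le \|d(X,x_0)\|_{L^2} + \|d(X,b)\|_{L^2} = \sqrt{\E d^2(X,x_0)} + \sqrt{\E d^2(X,b)}.
\]
Hence any near-minimizer (with $\E d^2(X,b) \le I+1$) lies in $K := B[x_0,R]$ for $R := \sqrt{\E d^2(X,x_0)} + \sqrt{I+1}$, so it suffices to minimize $\Phi(b) := \E d^2(X,b)$ over $K$.

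Next, I would endow $K$ with the topology $\cT := \cT_{x_0,R}$ supplied by the favorability assumption, under which $K$ is compact. The strategy is then the standard Weierstrass argument: show that $\Phi$ is $\cT$-lower semicontinuous on $K$ and conclude that the infimum is attained. Lower semicontinuity of $d(x,\cdot)$ (and hence $d^2(x,\cdot)$, by composition with the continuous nondecreasing map $u \mapsto u^2$) is given pointwise in $x$ by favorability. To lift this LSC property through the expectation, I would use truncation: set $\Phi_N(b) := \E[\min(d^2(X,b), N)]$. The integrand is LSC in $b$ for each $x$ (minimum of an LSC function with a constant), and is uniformly bounded by $N$; moreover, since on $K$ we have the integrable envelope $d^2(X,b) \le 2 d^2(X,x_0) + 2R^2$, dominated convergence gives the uniform bound
\[
\sup_{b \in K}\bigl(\Phi(b) - \Phi_N(b)\bigr) \;\le\; \E\bigl[(2 d^2(X,x_0) + 2R^2 - N)_+\bigr] \xrightarrow{N\to\infty} 0.
\]
Thus $\Phi_N \nearrow \Phi$ on $K$, so once each $\Phi_N$ is shown to be $\cT$-LSC, the same follows for $\Phi$ as a supremum of LSC functions. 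A standard Fatou-type argument (applied to the nonnegative, uniformly bounded family $\{\min(d^2(\cdot,b_\alpha),N)\}_\alpha$ dominated by $N$) then yields the LSC of $\Phi_N$.

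Finally, since $\Phi$ is $\cT$-LSC and $(K,\cT)$ is compact, $\Phi$ attains its infimum at some $\mu \in K$, producing the Fr\'echet mean. The main technical point I expect to require care is the LSC step for $\Phi_N$ when $\cT$ is not metrizable: Fatou's lemma is most comfortable for sequences, whereas general nets raise measurability questions for infima over uncountable directed sets. The workaround is either to exploit the fact that the topology $\cT$ is (sub-)metrizable on bounded sets in all the concrete examples considered in the paper (Wasserstein spaces, separable reflexive Banach spaces), so that sequential arguments suffice, or to use the uniform integrable envelope on $K$ to run a careful net-Fatou argument via monotone convergence on the infima $\inf_{\alpha \ge \alpha_0} \min(d^2(\cdot,b_\alpha),N)$. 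Either route is routine but is where the only real work lies.
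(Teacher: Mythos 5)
Your localization step is sound and matches the paper's Step~1 in spirit (the paper instead shows directly that $\inf_{x\notin B[x_0,R]}\E\,d^2(X,x)\to\infty$ as $R\to\infty$, but your Minkowski argument achieves the same reduction). The genuine gap is exactly the point you flag and then dismiss as ``routine'': the lower semicontinuity of $\Phi_N$ with respect to $\cT_{x_0,R}$. Since $\cT_{x_0,R}$ is not assumed first countable, LSC cannot be verified along sequences, and neither Fatou's lemma nor the monotone convergence theorem holds for nets of measurable functions. (Standard counterexample: index by finite subsets $F\subset[0,1]$ ordered by inclusion and take $f_F=\mathbf{1}_F$ on $([0,1],\mathrm{Leb})$; then $f_F\uparrow 1$ pointwise as a net while $\int f_F\equiv 0$.) Your second proposed repair, ``monotone convergence on the infima $\inf_{\alpha\ge\alpha_0}\min(d^2(\cdot,b_\alpha),N)$,'' fails for the same reason, and in addition these infima over uncountable directed sets need not be measurable. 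Your first repair (sub-metrizability of $\cT_{o,R}$ on balls) is true in the paper's concrete examples but is \emph{not} part of Definition~\ref{def:favorable}, so invoking it proves a strictly weaker lemma than the one stated.

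The paper closes this gap with a different idea that your truncation does not capture: it truncates in the $X$-variable rather than in the value of $d^2$. Because $(\X,d)$ is Polish, the law of $X$ is tight, so there are metrically compact sets $K_1\subset K_2\subset\cdots$ with $\PROB(X\in\cup_nK_n)=1$, and $\psi_n(y):=\E[d^2(X,y)\mathbf{1}_{\{X\in K_n\}}]\nearrow\psi(y)$ by (ordinary, sequential) monotone convergence. To show each superlevel set $\{\psi_n>u\}$ is $\cT_{x_0,R}$-open at a point $x$, one uses the pointwise $\cT$-lower semicontinuity of $d(z,\cdot)$ at each $z\in K_n$ to get a $\cT$-open neighborhood $A_z$ of $x$ with a quantitative lower bound on $d^2(\cdot,w)$ for all $w$ in a small metric ball $B_z$ around $z$; compactness of $K_n$ in the \emph{metric} topology yields a finite subcover $K_n\subset\cup_{i\le\ell}B_{z_i}$, and the finite intersection $\cap_{i\le\ell}A_{z_i}$ is a $\cT$-open neighborhood of $x$ on which $\psi_n\ge\psi_n(x)-\xi>u$. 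This establishes LSC of $\psi$ as a supremum of LSC functions without ever interchanging a limit with an integral along a net. You would need to import this tightness-plus-finite-subcover argument (or an equivalent) to make your proof complete at the stated level of generality.
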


\begin{proof}
The proof consists of two steps. The first one is to show that the infimum of $\E\,d^2(X,\cdot)$ over $\X$ coincides with the infimum over some closed ball $B[x_0,R]$. 
We then show that the restriction of  $\E\,d^2(X,\cdot)$ to this ball is lower semicontinuous in the topology $\cT_{x_0,R}$. Since the ball is compact in this topology, that will imply that $\E\,d^2(X,\cdot)$ achieves its infimum over the ball (and thus over the entirety of $\X$). 

\noindent{\em Step 1:} Let $R>0$. For any $x\in\X$ with $d(x,x_0)>R$,
\[d^2(X,x)\geq |d(x,x_0) - d(X,x_0)|^2\geq (R^2 - 2R\,d(X,x_0))\,{\bf 1}_{\{d(X,x_0)<R\}}.\]
Taking expectations, we obtain
\[\forall x\in X\backslash B[x_0,R]\,:\,\E\,d^2(X,x)\geq R^2\PROB(d(X,x_0)<R) -2R\,\E[d^2(X,x_0)\,{\bf 1}_{\{d(X,x_0)<R\}}].\]
When $R\to +\infty$, $\PROB(d(X,x_0)<R)\to 1$ and $\E[d^2(X,x_0)\,{\bf 1}_{\{d(X,x_0)<R\}}]\to \E[d^2(X,x_0)]<+\infty$. We obtain:
\[\lim_{R\to +\infty}\inf_{ x\in \X\backslash B[x_0,R]}\E\,d^2(X,x)=+\infty.\]
Therefore, for large enough $R$, 
\[\inf_{x\in \X}\E\,d^2(X,x) = \inf_{x\in B[x_0,R]}\E\,d^2(X,x).\]

\noindent{\em Step 2:} Fix $x_0$ and $R>0$ as above. We need to show that the restriction of the function $\psi(\cdot):=\E\,d^2(X,\cdot)$ to the ball $B[x_0,R]$ is lower semicontinuous in the $\cT_{x_0,R}$ topology. That is, we wish to show that, for any $u\geq 0$, the set 
\[E_{B[x_0,R]}(u):=\{x\in B[x_0,R]\,:\,\psi(x)>u\}\]
is open for the $\cT_{x_0,R}$ topology. That is equivalent to showing that each $x\in E_{B[x_0,R]}(u)$ is contained in an open neighborhood $O_{x,u}\in \cT_{x_0,R}$ with $O_{x,u}\subset E_{B[x_0,R]}(u)$. From now on, we fix $u\geq 0$, $x\in E_{B[x_0,R]}(u)$, and look for the corresponding $O_{x,u}\in\cT_{x_0,R}$. 

For this, we first need some preliminaries. Since $(\X,d)$ is Polish, one can find an increasing sequence of  sets $K_1\subset K_2\subset K_3\subset \cdots$ that are compact in the usual metric topology and satisfy $\PROB(X\in \cup_{n}K_n)=1$. Therefore, by Monotone Convergence,
\[\forall y\in\X\,:\,\psi_n(y)\nearrow \psi(y),\mbox{ where } \psi_n(y):=\E[d^2(X,y){\bf 1}_{\{X\in K_n\}}].\]
Recall that $x\in E_{B[x_0,R]}(u)$ has been fixed, so $\psi(x)>u$. By the above, we may take $n$ large enough and $\xi\in(0,1)$ small enough so that $\psi_n(x)>u+\xi$. 

Given $z\in K_n$, the fact that $(\X,d)$ is favorable implies that $d(z,\cdot)$ is lower-semicontinuous in the $\cT_{x_0,R}$ topology. It follows that, for any $r_z>0$, there exists an open neighborhood $A_z\in \cT_{x_0,R}$ with $x\in A_z$ such that:
\[\forall y\in A_z\,:\,d(z,y)\geq d(z,x) - r_z.
\]
For reasons to become clear soon, we choose $r_z>0$ so that  
\[6\,(d(z,x)+r_z)\,r_z\leq \xi.\]
Now let $B_z:=B(z,r_z)$ denote the open ball of radius $r_z$ around $z$ (with respect to the metric $d$). For $y\in A_z$ and $w\in B_z$, the triangle inequality guarantees $d(w,x)\leq d(z,x)+r_z$ and 
\[\forall y\in A_z\,:\,d(w,y)\geq d(w,x) - 3r_z.\]
Using that
\begin{align}
    \label{eq:simple.inequality.later.also}
    \forall \alpha,\beta,\gamma\geq 0\,:\, \alpha\geq \beta-\gamma\Rightarrow \alpha^2 \geq \beta^2 - 2\beta\gamma,
\end{align}
we obtain
\[\forall w\in B_z\,\forall y\in A_z\,:\,d^2(w,y)\geq d^2(w,x) - 6d(w,x)\,r_z.\]
By the choice of $r_z$ above, and recalling $d(w,x)\leq d(z,x)+r_z$ for $w\in B_z$, we obtain:
\begin{equation}\label{eq:lowerboundcompact}\forall z\in K_n\,\forall y\in A_z\,\forall w\in B_z\,:\, d^2(w,y)\geq d^2(w,x)-\xi.\end{equation}
The open balls $\{B_z\,:\,z\in K_n\}$ form an open covering of $K_n$ in the metric topology, and $K_n$ is compact with respect to that same topology. It follows that $K_n\subset \cup_{i=1}^\ell B_{z_i}$ for some choice of $\ell\in\N$ and $z_1,\ldots,z_\ell\in K_n$. 

To finish the proof, we show that 
\[O_{x,u}:=\bigcap_{i=1}^\ell A_{z_i}\]
is the open neighborhood of $x$ that we are looking for. We first notice that this set is open according to $\cT_{x_0,R}$ (i.e., $O_{x,u}\in \cT_{x_0,R}$), because it is the intersection of finitely many elements of the topology $\cT_{x_0,R}$. We also have $x\in O_{x,u}$ because $x\in A_z$ for all $z$.

Finally, \eqref{eq:lowerboundcompact} implies
\[\forall y\in O_{x,u}\,\forall w\in K_n\,:\, d^2(w,y)\geq d^2(w,x)-\xi,\]
which gives (after taking expectations)
\[\forall y\in O_{x,u}\,:\,\psi_n(y)\geq \psi_n(x)-\xi >u.\]
Since $\psi_n\leq \psi$, this implies that $\psi(y)>u$ for all $y\in O_{x,u}$, meaning that $O_{x,u}\subset  E_{B[x_0,R]}(u)$, as desired.
\end{proof}

\section{Appendix: Measurability issues}
The goal of this section is to address two measurability issues that appear in the main text of the paper:

\begin{itemize}

\item Given a Banach space $\X$ and a convex function $\Psi:\X\to\R$, is there a measurable function $g:\X\to\X^*$ such that, for each $x\in\X$, $g(x)$ is a subgradient of $\Psi$ at $x$?
\item Is there a measurable way to define the trimmed-mean estimator of the Fr\'{e}chet mean?
\end{itemize}

We will see that the answer to both questions is ``yes'', as a consequence of a more general framework that is introduced in the next subsection. 

\subsection{Measurable selection of minimizers: a general result}
The next result is a simple consequence of the Kuratowski--Ryll-Nardzewski measurable selection theorem \cite{kuratowski1965selector} (see also \cite[section 18.3]{aliprantis2006infinite} for a short proof) which seems generally useful for many problems related to statistical estimation.

\begin{theorem}\label{thm:measurableselection} Let $(\Omega,\mathcal{A})$ be a measurable space; $(\M,\rho)$ be a Polish space with Borel $\sigma$-field $\mathcal{B}_\M$; and $F:\Omega\times \M\to \R$ be a function such that $F(\cdot,m)$ is $\mathcal{A}$-measurable for all $m\in\M$. 

For $\omega\in\Omega$ and nonempty $B\subset \M$, write
\[\mathop{\rm argmin}_{m\in B}F(\omega,m) = \left\{m\in B\,:\,F(\omega,m) = \inf_{m'\in B}F(\omega,m')\right\}. \]
Suppose that the following properties hold for each fixed $\omega\in\Omega$:
\begin{enumerate}
\item the function $F(\omega,\cdot)$ is continuous;
\item $F(\omega,\cdot)$ achieves its infimum over $\M$; that is, 
\[\mathop{\rm argmin}_{m\in\M}F(\omega,m)\neq \emptyset;\]
\item for each closed ball $B\subset \M$, $F(\omega,\cdot)$ achieves its infimum over $B$; that is 
\[\mathop{\rm argmin}_{m\in B}F(\omega,\cdot)\neq \emptyset.\]
\end{enumerate}
Then there exists a $\mathcal{A}/\mathcal{B}_\M$-measurable function $m_\star:\Omega\to \M$ such that, for all $\omega\in\Omega$, \[m_\star(\omega)\in \mathop{\rm argmin}_{m\in\M}F(\omega,m).\] 
\end{theorem}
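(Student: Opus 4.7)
The plan is to apply the Kuratowski--Ryll-Nardzewski measurable selection theorem to the correspondence $\Gamma\colon\Omega\twoheadrightarrow \M$ defined by $\Gamma(\omega):=\argmin_{m\in\M} F(\omega,m)$. Hypothesis (2) says $\Gamma$ is nonempty-valued, and continuity of $F(\omega,\cdot)$ makes its values closed in $\M$. The only nontrivial task is to verify \emph{weak measurability}, i.e., that $\{\omega\in\Omega : \Gamma(\omega)\cap U\neq\emptyset\}\in\mathcal{A}$ for every open $U\subseteq\M$; once this is checked, Kuratowski--Ryll-Nardzewski produces the desired $\mathcal{A}/\mathcal{B}_\M$-measurable selection $m_\star$.

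First I would show that the value function $\phi(\omega):=\inf_{m\in\M} F(\omega,m)$ is $\mathcal{A}$-measurable. Fixing a countable dense set $D=\{m_k\}_{k\geq 1}$ in the Polish space $\M$, continuity of $F(\omega,\cdot)$ gives $\phi(\omega)=\inf_{k\geq 1} F(\omega,m_k)$, a countable infimum of measurable functions. The same argument, applied to any countable dense subset of the closed ball $\overline{B}(m_k,r)$ (which exists because subsets of separable metric spaces are separable), shows that
\[\phi_{k,r}(\omega):=\inf_{m\in\overline{B}(m_k,r)} F(\omega,m)\]
is $\mathcal{A}$-measurable for every $m_k\in D$ and every rational $r>0$.

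The key step is the following. By separability of $\M$, any open $U\subseteq\M$ can be written as a countable union $U=\bigcup_j \overline{B}(m_{k_j},r_j)$ of closed balls centered at points of $D$, with rational radii, each contained in $U$. Hence
\[\Gamma(\omega)\cap U\neq\emptyset \iff \exists\, j\colon \Gamma(\omega)\cap \overline{B}(m_{k_j},r_j)\neq\emptyset.\]
Hypothesis (3) says that $F(\omega,\cdot)$ attains its infimum over each closed ball, so
\[\Gamma(\omega)\cap \overline{B}(m_{k_j},r_j)\neq\emptyset \iff \phi_{k_j,r_j}(\omega)=\phi(\omega);\]
indeed, the forward direction is trivial, and the backward direction uses precisely that the infimum $\phi_{k_j,r_j}(\omega)$ is attained at some point of $\overline{B}(m_{k_j},r_j)$, which is then a global minimizer. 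Consequently,
\[\{\omega : \Gamma(\omega)\cap U\neq\emptyset\}=\bigcup_j \{\omega : \phi_{k_j,r_j}(\omega)=\phi(\omega)\}\in\mathcal{A},\]
establishing weak measurability.

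The main obstacle is exactly the second equivalence above. For a general open $U$, knowing that $\inf_{m\in U} F(\omega,m)=\phi(\omega)$ does \emph{not} imply that a minimizer lies in $U$: the minimum could sit on $\partial U$. Hypothesis (3) is what is needed to bypass this difficulty---covering $U$ by closed balls on which the infimum is guaranteed to be attained converts the ``nonempty intersection'' condition into the measurable equality $\phi_{k_j,r_j}=\phi$, after which the rest of the argument is routine.
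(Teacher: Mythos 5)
Your proposal is correct and follows essentially the same route as the paper's proof: decompose an arbitrary open set into a countable union of closed balls, use hypothesis (3) to convert the condition ``$\argmin$ meets the ball'' into the measurable equality of the ball-infimum with the global infimum, establish measurability of both infima via countable dense subsets and continuity, and then invoke Kuratowski--Ryll-Nardzewski. The only cosmetic difference is that you make explicit the choice of rational radii and centers from a fixed countable dense set, which the paper leaves implicit.
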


\begin{proof} For $\omega\in\Omega$, we write $S(\omega):= {\rm argmin}_{m\in\M}F(\omega,m)$ for convenience.  $S:\Omega\to 2^{\M}$ defines a set-valued mapping. Items $1$ and $2$ in our assumptions imply that, for each $\omega\in\Omega$, $S(\omega)$ is a closed, non-empty subset of $\M$.

In what follows, we will argue that $S$  is a {\em weakly measurable set-valued mapping}. That is, we will show that if we define 
\[S^{-1}(V):=\{\omega\in\Omega\,:\, S(\omega)\cap V\neq \emptyset\}\,, \quad (V\subset \M), \]
then $S^{-1}(V)\in\mathcal{A}$ whenever $V$ is an {\em open} subset of $\M$. Once we have this, the existence of $m_\star$ follows from the Kuratowski-Ryll-Nardzewski measurable selection theorem and the observations in the preceding paragraph. 

To prove that $S$ is weakly measurable, fix an open subset $V\subset \M$. Because $\M$ is Polish, we can write $V= \cup_{n\in\N}B_n$ where each $B_n$ is a closed ball. One can easily check that
\[S^{-1}(V) = \bigcup_{n\in\N}S^{-1}(B_n).\]
Since $\mathcal{A}$ is closed under countable unions, we will be done if we can show that $S^{-1}(B)\in\mathcal{A}$ whenever $B\subset \M$ is a closed ball. 

To this end, fix  such $B$. Notice that, for any $\omega\in \Omega$:
\[\omega\in S^{-1}(B) \Leftrightarrow \exists x\in B\,:\, F(\omega,x) = \inf_{m\in \M}F(\omega,m).\]
Crucially, an $x$ as in the RHS of the above expression must also achieve the infimum of $F(\omega,\cdot)$ over the ball $B$, which must then equal the infimum over the whole of $\M$. It follows that  
\[\omega\in S^{-1}(B)\Leftrightarrow ``\inf_{m\in B}F(\omega,m) =  \inf_{m\in \M}F(\omega,m)\mbox{ and }\exists x\in B\,:\, F(\omega,x) = \inf_{m\in B}F(\omega,m)".\]
From item $3$ in our assumptions, we know that $F(\omega,\cdot)$ it guaranteed to achieve its infimum over $B$. Therefore,
\[\omega\in S^{-1}(B) \Leftrightarrow\inf_{m\in B} F(\omega,m) = \inf_{m\in \M}F(\omega,m).\]
In set notation, this becomes
\[S^{-1}(B)=\left\{\omega\in\Omega\,:\,\inf_{m\in B} F(\omega,m) = \inf_{m\in \M}F(\omega,m)\right\}.\]
To complete the proof that $S^{-1}(B)\in\mathcal{A}$, it suffices to show that $\inf_{m\in B} F(\omega,m)$ and $ \inf_{m\in \M}F(\omega,m)$ are $\mathcal{A}$-measurable functions of $\omega\in \Omega$. 

To prove this last point, let $D\subset \M$ and $Q_B\subset B$ be countable dense subsets (which exist because $\M$ is Polish). Since $F(\omega,\cdot)$ is continuous in the second coordinate, 
\[\forall \omega\in\Omega\,:\,\inf_{m\in B} F(\omega,m) = \inf_{m\in Q_B} F(\omega,m)\mbox{ and } \inf_{m\in \M}F(\omega,m) = \inf_{m\in D}F(\omega,m).\]
Therefore, $\inf_{m\in B} F(\cdot,m)$ and $ \inf_{m\in \M}F(\cdot,m)$ are infima of {\em countably many} $\mathcal{A}$-measurable functions. As a consequence, both infima are $\mathcal{A}$-measurable, as we wanted to show.\end{proof}

\subsection{Measurability of subgradients}

In this subsection, we let $(\X,\|\cdot\|)$ denote a separable Banach space with separable dual $(\X^*,\|\cdot\|_*)$. Given a function $\Psi:U\subset\X\to \R$, recall that a vector $g_*\in\X^*$ is said to be a subgradient of $\Psi$ at $x\in\X$ if 
\[\forall y\in\X\,:\, \Psi(y) - \Psi(x)\geq \langle g_*,y-x\rangle,\]
where $\langle \cdot,\cdot\rangle:\X^*\times \X\to \R$ is the natural pairing of $\X^*$ and $\X$. 

\begin{theorem}
\label{thm:measurable.subgradient}
Assume $U\subset \X$ is open and convex and $\Psi:U\to\R$ is convex and continuous. Then there exists a $\mathcal{B}_U/\mathcal{B}_{\X^*}$-measurable function $g_\star:U\to\X^*$ such that, for all $x\in U$, $g_\star(x)$ is a subgradient of $\Psi$ at $x$. (Here $\mathcal{B}_U$ and $\mathcal{B}_{\X^*}$ are the natural Borel $\sigma$-fields given by the respective norm topologies over $U$ and $\X^*$, respectively.)\end{theorem}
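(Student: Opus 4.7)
The plan is to apply Theorem~\ref{thm:measurableselection} with $(\Omega,\mathcal{A}) = (U, \mathcal{B}_U)$ and $\M = \X^\ast$ (Polish since it is a separable Banach space), to a penalty function $F : U \times \X^\ast \to [0,1]$ whose argmin at each $x \in U$ is exactly $\partial\Psi(x)$. To construct $F$, I fix a countable dense subset $D = \{y_k\}_{k\geq 1} \subset U$ (which exists since $U$ is separable) and set
\[ \eta_k(x, g) := \max\{0,\, \Psi(x) + \langle g, y_k - x\rangle - \Psi(y_k)\}, \qquad F(x,g) := \sum_{k=1}^\infty 2^{-k}\, \min\{1,\, \eta_k(x,g)\}.\]
By continuity of $\Psi$ and density of $D$, one has $\eta_k(x,g) = 0$ for every $k \geq 1$ if and only if $\Psi(y) - \Psi(x) \geq \langle g, y-x\rangle$ for every $y \in U$, i.e.\ $g \in \partial\Psi(x)$. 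Consequently $\mathop{\rm argmin}_{g \in \X^\ast} F(x,\cdot) = \{F(x,\cdot)=0\} = \partial\Psi(x)$.

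Next I verify the hypotheses of Theorem~\ref{thm:measurableselection}. For fixed $g$, each $\eta_k(\cdot,g)$ is continuous in $x$ (combining continuity of $\Psi$ and of the affine map $x \mapsto \langle g, y_k - x\rangle$), and the series defining $F(\cdot,g)$ converges uniformly, so $F(\cdot,g)$ is continuous and hence Borel measurable. For fixed $x$, the same uniform convergence makes $F(x,\cdot)$ norm-continuous on $\X^\ast$. The existence of a global minimizer is automatic: $\partial\Psi(x)$ is nonempty by the Hahn--Banach theorem (since $\Psi$ is convex and continuous at the interior point $x$), and on this set $F$ vanishes.

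The main obstacle is verifying the third hypothesis, namely that $F(x,\cdot)$ attains its infimum on every closed ball $B\subset \X^\ast$. Norm-continuity alone is insufficient because closed balls in $\X^\ast$ are typically not norm-compact. The remedy is that each $\eta_k(x,\cdot)$ depends on $g$ only through the evaluation functional $g\mapsto\langle g, y_k-x\rangle$, which is weak-$\ast$-continuous. Since the sum defining $F(x,\cdot)$ converges uniformly, $F(x,\cdot)$ is a uniform limit of weak-$\ast$-continuous functions, and thus itself weak-$\ast$-continuous on $\X^\ast$. By the Banach--Alaoglu theorem, $B$ is weak-$\ast$-compact, and so the weak-$\ast$-continuous function $F(x,\cdot)$ attains its infimum on $B$. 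With all the hypotheses of Theorem~\ref{thm:measurableselection} in place, that theorem delivers a Borel measurable map $g_\star : U \to \X^\ast$ satisfying $g_\star(x) \in \mathop{\rm argmin}_{g \in \X^\ast} F(x,g) = \partial\Psi(x)$ for every $x \in U$, which is the desired selection.
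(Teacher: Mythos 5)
Your proposal is correct and follows the same overall strategy as the paper --- reduce to Theorem~\ref{thm:measurableselection} via a nonnegative penalty $F(x,\cdot)$ whose zero set is exactly $\partial\Psi(x)$, and handle the closed-ball condition with Banach--Alaoglu --- but the construction of the penalty is genuinely different. The paper sets $F(x,g_*)=\sup_{h\in B[0,1]}\min\{1,\,r(x)\langle g_*,h\rangle-\Psi(x+r(x)h)+\Psi(x)\}$, which forces it to (i) introduce a continuous radius function $r(\cdot)$ keeping $x+r(x)h$ inside $U$, (ii) prove a separate lemma (Proposition~\ref{prop:localsubgrad}) showing that the \emph{local} subgradient inequality on a small ball already implies the global one, and (iii) settle for weak-$*$ \emph{lower semicontinuity} of $F(x,\cdot)$ (a supremum of weak-$*$ continuous functions), which still suffices for attainment on weak-$*$ compact balls. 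Your weighted countable sum $\sum_k 2^{-k}\min\{1,\eta_k(x,g)\}$ over a dense subset of $U$ bypasses both the radius function and the local-to-global lemma (density of $\{y_k\}$ plus continuity of $\Psi$ and of $y\mapsto\langle g,y-x\rangle$ does that work directly), and the uniform convergence of the series upgrades the conclusion to full weak-$*$ continuity and makes measurability in $x$ immediate. The only hypotheses you rely on that the paper also needs are separability of $\X^*$ (so that it is Polish, as assumed in that subsection) and nonemptiness of $\partial\Psi(x)$ at interior points, both of which you invoke correctly. Net effect: your argument is somewhat leaner; the paper's supremum formulation is closer in spirit to the variational characterizations it reuses elsewhere (e.g.\ in the proof of Theorem~\ref{thm:favorablemeasurable}).
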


The idea of the proof will be to apply Theorem \ref{thm:measurableselection} for a suitable $F:\X\times \X^*\to \R$ such that $g_*$ is a subgradient at $x$ if and only if $F(x,g_*)=\inf_{g_*'\in\X^*}F(x,g_*)$. 

To construct this function, we will need the following proposition. 

\begin{proposition}\label{prop:localsubgrad} $g_\ast\in \X^*$ is a subgradient of $\Psi$ at $x\in U$ if and only if there exists $r>0$ such that $B_{(\X,\|\cdot\|)}[x,r]\subset U$ and for all $h\in B_{(\X,\|\cdot\|)}[0,1]$
\[\Psi(x+rh) - \Psi(x)\geq r\, \langle g_*,h\rangle.\]
\end{proposition}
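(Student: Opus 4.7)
The plan is to verify the two implications separately, both being essentially standard convex-analytic manipulations.

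For the ``only if'' direction, I would use that $U$ is open to pick \emph{any} $r>0$ such that $B_{(\X,\|\cdot\|)}[x,r]\subset U$. Then for every $h\in B_{(\X,\|\cdot\|)}[0,1]$ the point $y := x+rh$ lies in $U$, so the defining subgradient inequality applied to this particular $y$ yields
\[\Psi(x+rh)-\Psi(x) = \Psi(y)-\Psi(x) \geq \langle g_\ast, y-x\rangle = r\,\langle g_\ast, h\rangle,\]
which is exactly the local condition.

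For the ``if'' direction, I would fix $r>0$ as in the hypothesis and show that the subgradient inequality $\Psi(y)-\Psi(x)\geq \langle g_\ast,y-x\rangle$ holds for every $y\in U$. Given such $y$, the idea is to reduce to a displacement that fits inside $rB_{(\X,\|\cdot\|)}[0,1]$. To this end choose $\lambda\in(0,1]$ small enough that $\lambda\|y-x\|\leq r$, and set $h_\lambda := \lambda(y-x)/r$, which has norm at most $1$. The hypothesis then applies and gives
\[\Psi(x+\lambda(y-x)) - \Psi(x) \;=\; \Psi(x+rh_\lambda)-\Psi(x) \;\geq\; r\langle g_\ast,h_\lambda\rangle \;=\; \lambda\,\langle g_\ast, y-x\rangle.\]
On the other hand, convexity of $\Psi$ on $U$ (which contains the segment from $x$ to $y$ since $U$ is convex) gives
\[\Psi(x+\lambda(y-x)) \;\leq\; (1-\lambda)\Psi(x) + \lambda\Psi(y),\]
i.e.\ $\Psi(x+\lambda(y-x))-\Psi(x)\leq \lambda(\Psi(y)-\Psi(x))$. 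Combining the two bounds and dividing by $\lambda>0$ produces $\langle g_\ast,y-x\rangle\leq \Psi(y)-\Psi(x)$, which is the subgradient inequality. Since $y\in U$ was arbitrary, $g_\ast$ is a subgradient at $x$.

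There is no serious obstacle here: the ``only if'' direction is immediate from the openness of $U$, and the ``if'' direction is the classical reduction of a global affine minorant to a local one via convexity and rescaling. The only point that needs a brief comment is the quantification: although the definition of a subgradient formally ranges over all $y\in\X$, the inequality is vacuous (or requires the standard $+\infty$ extension) outside $U$, so it suffices to prove it for $y\in U$.
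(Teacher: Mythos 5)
Your proposal is correct and follows essentially the same route as the paper: the ``only if'' direction uses openness of $U$ to find a suitable $r$, and the ``if'' direction rescales a general $y\in U$ to a point $x+\lambda(y-x)$ inside the ball $B_{(\X,\|\cdot\|)}[x,r]$ and then uses convexity of $\Psi$ along the segment (the paper phrases this as monotonicity of difference quotients with the specific choice $\lambda=r/\|y-x\|$, which is the same three-point convexity inequality you invoke). Your closing remark about the quantification over $y\in U$ rather than $y\in\X$ matches what the paper's proof implicitly does as well.
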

\begin{proof}
The necessity of this condition is clear. To show sufficiency, assume $x$ and $g_*$ are fixed and that the above condition holds. Given $y\in U$, we wish to show that $\Psi(y) - \Psi(x)\geq \langle g_*,y-x\rangle$. This is direct if $y\in B_{(\X,\|\cdot\|)}[x,r]$, as in that case $y=x+rh$ with $\|h\|\leq 1$. If on the other hand $y\not\in B_{(\X,\|\cdot\|)}[x,r]$, one still has
\[y := x + \|y-x\|\,h\,\mbox{ with } h:= \frac{y-x}{\|y-x\|}\in B_{(\X,\|\cdot\|)}[0,1].\]
Define $y_r:= x + rh$. Since $0<r<\|y-x\|$, $y_r$ lies in the line segment from $x$ to $y$. Therefore, convexity gives: 
\[\frac{\Psi(y) - \Psi(x)}{\|y-x\|}\geq \frac{\Psi(x+rh)-\Psi(x)}{r}\geq \frac{\langle g_*,h\rangle}{r} = \left\langle g_*,\frac{y-x}{\|y-x\|}\right\rangle,\]
where in the last two steps we used our assumption and the definition of $h$.\end{proof}

\begin{proof}[Proof of the Theorem \ref{thm:measurable.subgradient}]
First observe that there exists a bounded continuous function $r:U\to (0,1]$ such that $B_{(\X,\|\cdot\|)}[x,r(x)]\subset U$ for all $x\in U$. Indeed, it suffices to take $r(x) \equiv 1$ if $U=\X$, and
\[r(x) :=\max\left\{1,\frac{1}{2}\inf_{y\in \X-U}\|x-y\|\right\}\\,\quad (x\in U)\]
if $U\neq \X$. 

We now define $G:U\times \X^*\times B_{(\X,\|\cdot\|)}[0,1]\to \R$ and $F:U\times \X^*\to\R$ as follows: given $(x,g_*,h)\in U\times \X^*\times B_{(\X,\|\cdot\|)}[0,1]$,
\[G(x,g_*,h):=\min\{1,r(x)\,\langle g_*,h\rangle - \Psi(x+r(x)h) + \Psi(x)\}\]
and 
\[F(x,g_*):= \sup_{h\in B_{(\X,\|\cdot\|)}[0,1]}G(x,g_*,h).\]
The function $F$ is nonnegative for all $(x,g_*)\in U\times \X^*$ because $F(x,g_*)\geq G(x,g_*,0)=0$.
Moreover, Proposition \ref{prop:localsubgrad}, combined with the choice of $r(\cdot)>0$, implies that $g_*\in \X^*$ is a subgradient of $\Psi$ at $x\in U$ if and only if $F(x,g_*)=0$. Since $F$ is nonnegative, we deduce that for all $(x,g_*)\in U\times \X^*$, 
\[
\mbox{$g_*\in\X^*$ is a subgradient of $\Psi$ at $x\in\X$}\Leftrightarrow g_*\in \mathop{\rm argmin}_{g'_*\in\X^*}F(x,g_*'),\]
as indicated in the beginning of the proof. 

To obtain the measurable choice of subgradient, we will apply Theorem \ref{thm:measurableselection} to this function $F$, with the choices:
\begin{itemize}
\item $\Omega=U$;
\item $\mathcal{A}=\mathcal{B}_U$, the Borel $\sigma$-field on $U$ (with the usual norm topology);
\item $\M=\X^*$ with the metric $\rho$ given by the dual norm $\|\cdot\|_*$ (this is a Polish space because $(\X^*,\|\cdot\|_*)$ is assumed separable).\end{itemize}

For the remainder of the proof, we check that the remaining assumptions of Theorem \ref{thm:measurableselection} are satisfied. First, we argue that, for a fixed $g_*\in\X^*$, $F(\cdot,g_*)$ is $\mathcal{B}_{U}$-measurable. This follows from the fact that $\Psi$ and $r(\cdot)$ are continuous functions (thus measurable), so that $G(x,g_*,h)$ depends continuously on $(x,h)$. For this reason, the supremum over all $h\in B_{(\X,\|\cdot\|)}[0,1]$ in the definition of $F(x,g_*)$ can be replaced by a supremum over a countable dense subset of the unit ball (since $\X$ is separable; again we use continuity of $\Psi$). As a consequence, $F(\cdot,g_*)$ is the pointwise supremum of countably many $\mathcal{B}_{\X}$-measurable functions. It follows that $F(\cdot,g_*)$ is $\mathcal{B}_\X$-measurable, as desired.

We now fix $x\in U$ and check conditions 1-3 in Theorem \ref{thm:measurableselection}. Condition 1 (continuity) follows because $F(x,\cdot)$ is a supremum of uniformly bounded, $r(x)$-Lipschitz functions. Condition 2 (the infimum is achieved) follows from the existence of subgradients for a convex function over all points of an open and convex set. 

It remains to check condition 3, that $F(x,\cdot)$ achieves its infimum over any closed ball of the dual space $\X^*$. This holds because  $F(x,g_*)$ is the pointwise supremum of $G(x,\cdot,h) $, which are {\em weak-$*$ continuous functions} and uniformly bounded from above. Therefore, $F(x,\cdot)$ is weak-$*$ lower-semicontinuous, and achieves its infimum over any weak-$*$ compact set. The Banach-Alaoglu Theorem guarantees that closed balls of $\X^*$ are weak-$*$ compact, so condition 3 follows.\end{proof}

\subsection{Measurable empirical trimmed Fr\'{e}chet means}\label{sub:measurabletrimmed}

We now consider the question of whether the estimator we defined in the main text is a measurable function of the data. This will be shown to be the case whenever the metric space is {\em favorable} in the sense of Definition \ref{def:favorable} in the main text.

Fix $n\in\N$, $t\in\N\cup\{0\}$ with $2t<n$. Define 
$T_{n,t}:\X^{n+2}\to \R$ as follows: for $(x_1,\ldots,x_n)\in \X^n$ and $(a,b)\in\X^2$,
\begin{align*}
T_{n,t}(x_1,\ldots,x_n,a,b) & := \frac{1}{n-2t}\sum_{i=t+1}^{n-2t}\,\left((d^2(x_{j},b) - d^2(x_{j},a))_{j=1}^n\right)_{(i)}.
\end{align*}

A {\em $t$-trimmed empirical Fr\'{e}chet-mean} of $(x_1,\ldots,x_n)\in\X^n$ is any element of the set of minimizers 
\[\mu\in \mathop{\rm argmin}_{b\in\X}\,\left(\sup_{a\in \X}T_{n,t}(x_1,\ldots,x_n,a,b)\right).\]  
For $t=0$, this coincides with the usual definition of an empirical Fr\'{e}chet mean of the sample $(x_1,\ldots,x_n)$. 

We say that $(\X,d)$ {\em has measurable trimmed empirical Fr\'{e}chet means} if, for any $t$ and $n$ as above, there exists $\wh\mu_{n,t}:\X^n\to\X$ such that, for any $(x_1,\ldots,x_n)\in\X^n$, $\wh\mu_{n,t}(x_1,\ldots,x_n)$ is a $t$-trimmed Fr\'{e}chet mean of $(x_1,\ldots,x_n)$. We prove the following theorem.

\begin{theorem}[Favorable spaces have measurable trimmed empirical Fr\'{e}chet means] \label{thm:favorablemeasurable} If the Polish space $(\X,d)$ is favorable (as in Definition \ref{def:favorable} above), then it has measurable trimmed empirical Fr\'{e}chet means.\end{theorem}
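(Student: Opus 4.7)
The plan is to apply the measurable selection result in Theorem \ref{thm:measurableselection} with $\Omega = \X^n$ (equipped with its Borel $\sigma$-field), $\M = \X$ endowed with its metric $d$, and
\[F\bigl((x_1,\ldots,x_n), b\bigr) := \sup_{a\in\X} T_{n,t}(x_1,\ldots,x_n, a, b).\]
A Borel selector of $\argmin_{b\in\X} F(\omega, b)$ is exactly the sought mapping $\wh\mu_{n,t}$. The technical tool underpinning every step is that the trimmed-mean functional $T_{n,t}\colon \R^n\to\R$ is $1$-Lipschitz with respect to the $\ell^\infty$-norm and coordinate-wise non-decreasing.

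To verify the continuity and measurability hypotheses of the selection theorem, I would use the uniform bound
\[|T_{n,t}(\omega, a, b) - T_{n,t}(\omega, a, b')|\leq \max_j |d^2(x_j, b) - d^2(x_j, b')|,\]
valid for every $a\in\X$; this follows from the $1$-Lipschitz property since the differences $v_j-w_j = d^2(x_j,b)-d^2(x_j,b')$ do not depend on $a$. Taking the supremum over $a$ on both sides yields local Lipschitz continuity of $F(\omega,\cdot)$ on $d$-bounded sets, which is assumption~1 of Theorem \ref{thm:measurableselection}. The analogous bound in the variable $a$ shows that $T_{n,t}(\omega,\cdot,b)$ is continuous, so separability of $\X$ lets us replace the supremum defining $F$ by one over a fixed countable dense subset $\X_{\mathrm{cd}}\subset \X$. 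Since $\omega\mapsto T_{n,t}(\omega, a, b)$ is continuous on $\X^n$ for each $a$, $F(\cdot, b)$ is then a countable supremum of Borel functions, hence Borel.

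For the attainment of the infima (items 2 and 3 of Theorem \ref{thm:measurableselection}), we exploit the favorable structure. Fix $o\in\X$ and $R>0$. By the favorable property, each $b\mapsto d^2(x_j, b)$ is $\cT_{o,R}$-lower semi-continuous on $B[o, R]$, and since $T_{n,t}$ is continuous and coordinate-wise non-decreasing on $\R^n$, the map $b\mapsto T_{n,t}(\omega, a, b)$ is $\cT_{o,R}$-l.s.c.\ on $B[o, R]$ for every $a\in\X$. A supremum of l.s.c.\ functions is l.s.c., so $F(\omega,\cdot)$ is $\cT_{o,R}$-l.s.c.\ on $B[o, R]$; combined with $\cT_{o,R}$-compactness of $B[o, R]$, this yields attainment of the infimum over each closed ball. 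For the infimum over all of $\X$, coercivity follows from $F(\omega, b) \ge T_{n,t}(\omega, x_1, b)$, whose value tends to $+\infty$ as $d(b, o) \to \infty$ because every coordinate $d^2(x_j, b) - d^2(x_j, x_1)$ does so; exactly as in Step~1 of the proof of Lemma \ref{lem:existence.barycenter}, this reduces the minimization over $\X$ to minimization over a sufficiently large closed ball, where attainment has just been established.

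The main obstacle is the simultaneous handling of two topologies on $\X$: the $d$-metric topology, needed for the continuity and measurability hypotheses of Theorem \ref{thm:measurableselection}, and the typically coarser topology $\cT_{o,R}$ of the favorable structure, which is indispensable for compactness of balls but may fail to be metrizable. The uniform $1$-Lipschitz estimate is what lets the inner supremum over $a$ be controlled in both topologies at once: it preserves (local Lipschitz) continuity in $b$ for the metric topology, while lower semi-continuity in $\cT_{o,R}$ survives the supremum by a general principle, and Borel measurability in $\omega$ reduces via separability to a countable supremum of continuous functions.
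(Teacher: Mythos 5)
Your proposal is correct and follows essentially the same route as the paper: it invokes Theorem \ref{thm:measurableselection} with the identical choice of $F$, proves Borel measurability in $\omega$ via a countable dense supremum over $a$, continuity in $b$ via the uniform $\ell^\infty$-Lipschitz property of the trimmed mean, attainment over closed balls via $\cT_{o,R}$-lower semicontinuity plus compactness, and coercivity to reduce the global minimization to a ball. The only (immaterial) differences are that you derive lower semicontinuity from the general ``monotone Lipschitz composition of l.s.c.\ functions'' principle where the paper perturbs the order statistics explicitly, and your coercivity is qualitative where the paper exhibits the explicit ball $B[o,3R]$.
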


The proof of Theorem \ref{thm:favorablemeasurable} will require the machinery of Theorem \ref{thm:measurableselection}. Fix $n\in\N$ and $0\leq t<n/2$. We will apply Theorem \ref{thm:measurableselection} with $\Omega:=\X^{n}$, $\mathcal{A}$ the Borel (product) $\sigma$-field coming from the metric $d$; and $(\M,\rho):=(\X,d)$. We define: 
\[F_{n,t}:\X^{n}\times \X\to \R\]
via
\[F_{n,t}((x_1,\ldots,x_n),b):= \sup_{a\in \X}T_{n,t}(x_1,\ldots,x_n,a,b)\, ,\ \quad ((x_1,\ldots,x_n)\in \X^n,\, b\in \X).\]

The bulk of the proof consists of three steps, which we state as separate propositions. 

\begin{proposition}\label{prop:measurablefavorable1}For fixed $b\in \X$, $F_{n,t}(\cdot,b)$ is Borel-measurable.\end{proposition}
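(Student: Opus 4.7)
The plan is to exploit joint continuity of $T_{n,t}$ together with separability of $\X$, which will let me replace the supremum over $\X$ by a countable supremum.

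First I would check that the map
\[(x_1,\ldots,x_n,a,b)\,\mapsto\,T_{n,t}(x_1,\ldots,x_n,a,b)\]
is jointly continuous on $\X^n\times\X\times\X$. Indeed, for each index $j$ the entry $d^2(x_j,b)-d^2(x_j,a)$ is continuous in $(x_j,a,b)$, so the vector-valued map
\[\Phi\colon(x_1,\ldots,x_n,a,b)\mapsto\bigl(d^2(x_j,b)-d^2(x_j,a)\bigr)_{j=1}^n\in\R^n\]
is continuous. The order-statistic map $v\mapsto(v_{(j)})_{j=1}^n$ on $\R^n$ is $1$-Lipschitz in the $\ell^\infty$ norm (this is a standard consequence of its expression via iterated $\min$/$\max$ operations on subsets), hence continuous. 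Composing with the linear averaging of coordinates $t+1,\ldots,n-t$ yields joint continuity of $T_{n,t}$.

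Next, since $(\X,d)$ is Polish (in particular separable), I would fix a countable dense subset $\X_{\rm cd}\subset\X$. By the continuity just established, for every fixed $(x_1,\ldots,x_n,b)\in\X^n\times\X$ the map $a\mapsto T_{n,t}(x_1,\ldots,x_n,a,b)$ is continuous on $\X$, hence
\[F_{n,t}\bigl((x_1,\ldots,x_n),b\bigr)=\sup_{a\in\X}T_{n,t}(x_1,\ldots,x_n,a,b)=\sup_{a\in\X_{\rm cd}}T_{n,t}(x_1,\ldots,x_n,a,b).\]

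Finally, for each fixed $a\in\X_{\rm cd}$ (and the fixed $b$ in the statement), the map $(x_1,\ldots,x_n)\mapsto T_{n,t}(x_1,\ldots,x_n,a,b)$ is continuous on $\X^n$ (in fact it is a restriction of the jointly continuous map above), hence Borel measurable with respect to the Borel product $\sigma$-field on $\X^n$. A pointwise supremum of countably many Borel measurable real-valued functions is Borel measurable, so $F_{n,t}(\cdot,b)$ is Borel measurable, as desired.

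I do not expect a serious obstacle here; the only point worth double-checking is the continuity of the sorting map, which is immediate once one writes $v_{(j)}$ as a $\min$-over-$\max$ (or invokes the elementary fact $|v_{(j)}-w_{(j)}|\le\|v-w\|_\infty$). The rest is a routine ``continuous $\Rightarrow$ Borel, plus supremum over a countable dense set'' argument, which is exactly why separability of $\X$ is needed.
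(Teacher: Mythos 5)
Your proof is correct and follows essentially the same route as the paper: use continuity of $T_{n,t}$ in the variable $a$ to replace $\sup_{a\in\X}$ by a supremum over a countable dense subset, then observe that a countable supremum of Borel-measurable functions is Borel-measurable. The paper states this in two sentences; your version merely fills in the (routine) details of the joint continuity of $T_{n,t}$ via the Lipschitz property of the order-statistics map.
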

\begin{proof}Since $T_{n,t}(x_1,\ldots,x_n,a,b)$ is continuous in $b$ and $(\X,d)$ is Polish, the supremum in the definition of $F_{n,t}$ can be restricted to a countable dense subset of values $a$. As a result, $F_{n,t}(\cdot,b)$ is a supremum of countably many Borel-measurable functions.
\end{proof}

\begin{proposition}\label{prop:continuousfavorable}
For every  $x_1,\ldots,x_n\in\X$, the function $F_{n,t}((x_1,\ldots,x_n),\cdot)$ is continuous.
Moreover, given $o\in\X$ and $R>0$ such that $x_1,\ldots,x_n\in B[o,R]$,
\[F_{n,t}((x_1,\ldots,x_n),o)< \inf_{b\in \X\setminus B[o,3R]}F_{n,t}((x_1,\ldots,x_n),b).\]
\end{proposition}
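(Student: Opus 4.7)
The plan is to prove both assertions by direct estimates, exploiting the fact that the sorting map on $\R^n$ is $1$-Lipschitz in the sup norm: for any $u,v\in\R^n$ and any $1\leq i\leq n$, $|u_{(i)}-v_{(i)}|\leq\|u-v\|_\infty$. (This standard inequality follows by counting: at least $i$ coordinates of $u$ are $\leq u_{(i)}$, and each such $v_j\leq u_j+\|u-v\|_\infty$, forcing $v_{(i)}\leq u_{(i)}+\|u-v\|_\infty$, and symmetrically.) For the continuity claim, I would apply this to the vectors $u(b,a):=(d^2(x_j,b)-d^2(x_j,a))_{j=1}^n$. Since the increment $u(b,a)-u(b',a)=(d^2(x_j,b)-d^2(x_j,b'))_j$ is independent of $a$, averaging the sup-norm bound over the trimmed order statistics yields the uniform-in-$a$ estimate
\[\left|T_{n,t}(x_1,\ldots,x_n,a,b)-T_{n,t}(x_1,\ldots,x_n,a,b')\right|\leq \max_{1\leq j\leq n}|d^2(x_j,b)-d^2(x_j,b')|.\]
Taking the supremum over $a$ transfers this bound to $F_{n,t}((x_1,\ldots,x_n),\cdot)$, and continuity of each $d^2(x_j,\cdot)$ finishes the first claim.

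For the second assertion, I would produce an upper bound on $F_{n,t}((x_1,\ldots,x_n),o)$ and a strictly larger lower bound on $F_{n,t}((x_1,\ldots,x_n),b)$ for $b\in\X\setminus B[o,3R]$, in both cases evaluating the trimmed mean against the test point $a=o$. For the upper bound, for every $a\in\X$ and every $j$ we have $d^2(x_j,o)-d^2(x_j,a)\leq d^2(x_j,o)\leq R^2$, so all trimmed order statistics are bounded by $R^2$, giving $T_{n,t}(x_1,\ldots,x_n,a,o)\leq R^2$ and hence $F_{n,t}((x_1,\ldots,x_n),o)\leq R^2$. For the lower bound, when $d(o,b)>3R$ the triangle inequality yields $d(x_j,b)\geq d(o,b)-d(x_j,o)>3R-R=2R$, hence $d^2(x_j,b)-d^2(x_j,o)>4R^2-R^2=3R^2$ for every $j$; every trimmed order statistic of this vector then strictly exceeds $3R^2$, so $F_{n,t}((x_1,\ldots,x_n),b)\geq T_{n,t}(x_1,\ldots,x_n,o,b)>3R^2$. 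Combining the two estimates gives the strict separation
\[F_{n,t}((x_1,\ldots,x_n),o)\leq R^2<3R^2<F_{n,t}((x_1,\ldots,x_n),b),\]
uniformly in $b\in\X\setminus B[o,3R]$.

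I do not anticipate a substantial obstacle: both parts reduce to elementary manipulations once the $1$-Lipschitz property of order statistics and the triangle inequality are brought to bear. The only step requiring mild care is making the Lipschitz estimate in the first part uniform in $a$, which is immediate from the structure of the increment $u(b,a)-u(b',a)$ and is what allows the sup over $a$ to be taken through the bound.
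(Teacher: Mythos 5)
Your proposal is correct and follows essentially the same route as the paper: the continuity claim rests on uniform-in-$a$ equicontinuity of $b\mapsto T_{n,t}(x_1,\ldots,x_n,a,b)$ (which you make explicit via the $1$-Lipschitz property of order statistics in the sup norm, where the paper merely asserts the equicontinuity), and the separation claim uses the identical test point $a=o$ with the same bounds $F_{n,t}((x_1,\ldots,x_n),o)\leq R^2$ and $F_{n,t}((x_1,\ldots,x_n),b)\geq T_{n,t}(x_1,\ldots,x_n,o,b)>3R^2$. No gaps.
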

\begin{proof}
For fixed $x_1,\ldots,x_n$, the family of functions $(T_{n,t}((x_1,\ldots,x_n),\cdot,a))_{a\in \X}$ is uniformly continuous and uniformly bounded over closed balls. Therefore, $F_{n,t}$ is continuous. Now note that
\[F_{n,t}((x_1,\ldots,x_n),o)\leq R^2\]
whereas for $b\in \X\setminus B[o,3R]$,
\[F_{n,t}((x_1,\ldots,x_n),b)\geq T_{n,t}(x_1,\ldots,x_n,o,b)\geq 3R^2.\]\end{proof}

\begin{proposition}\label{prop:minoverclosedballsfavorable}For any closed ball $B[o,R]\subset\X$ and any $x_1,\ldots,x_n\in\X$, 
\[\inf_{b\in B}F_{n,t}((x_1,\ldots,x_n),b)\mbox{ is achieved}.\]
\end{proposition}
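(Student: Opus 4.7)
The plan is to reduce the claim to showing that $b\mapsto F_{n,t}((x_1,\ldots,x_n),b)$ is lower semicontinuous on $B[o,R]$ in the topology $\cT_{o,R}$ supplied by favorability (Definition~\ref{def:favorable}). Since $(B[o,R],\cT_{o,R})$ is compact, Weierstrass' principle for lsc functions on compact spaces (the sets $\{b\in B[o,R]:F_{n,t}((x_1,\ldots,x_n),b)\le \inf + 1/k\}$ are nested, nonempty, and closed, hence their intersection is nonempty) then yields the minimizer.

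The main technical ingredient is that the $i$-th order statistic of finitely many lsc functions is itself lsc. By the definition of favorability, each $d(x_j,\cdot)$ is lsc on $B[o,R]$ in $\cT_{o,R}$, and nonnegativity implies $d^2(x_j,\cdot)$ is lsc too; for any fixed $a\in\X$, the functions $v_j(b):=d^2(x_j,b)-d^2(x_j,a)$ are then lsc in $b$ (the subtracted term is a $b$-independent constant). The key identity is
\[
v_{(i)}(b)=\max_{\substack{S\subseteq\{1,\ldots,n\}\\ |S|=n-i+1}}\ \min_{j\in S}v_j(b),
\]
which, combined with the elementary facts that any finite minimum of lsc functions is lsc (since finite intersections of open super-level sets are open) and any finite maximum of lsc functions is lsc, shows that each $v_{(i)}$ is lsc on $B[o,R]$. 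Consequently, for each fixed $a$, the map
\[
b\mapsto T_{n,t}(x_1,\ldots,x_n,a,b)=\frac{1}{n-2t}\sum_{i=t+1}^{n-t}v_{(i)}(b)
\]
is lsc as a nonnegative combination of lsc functions.

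To finish, I use that an arbitrary supremum of lsc functions is lsc (arbitrary unions of open sets are open), so $F_{n,t}((x_1,\ldots,x_n),\cdot)=\sup_{a\in\X}T_{n,t}(x_1,\ldots,x_n,a,\cdot)$ is lsc on $B[o,R]$, completing the reduction. The substantive step is the lower semicontinuity of order statistics: naive manipulations such as ``$\sum_{i=t+1}^{n-t}v_{(i)}=\sum_j v_j-\sum_{\text{top }t}-\sum_{\text{bottom }t}$'' break down because the trimmed tails are themselves lsc and their negations would be upper semicontinuous. The min-max representation above circumvents this obstacle by expressing each order statistic purely through finite maxima and minima, under which lsc is stable.
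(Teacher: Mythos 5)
Your proposal is correct, and its overall architecture coincides with the paper's: reduce the claim to lower semicontinuity of $F_{n,t}((x_1,\ldots,x_n),\cdot)$ on $B[o,R]$ in the topology $\cT_{o,R}$, observe that a supremum of lsc functions is lsc so that it suffices to treat $b\mapsto T_{n,t}(x_1,\ldots,x_n,a,b)$ for fixed $a$, and then invoke compactness of $(B[o,R],\cT_{o,R})$ to extract a minimizer. The only place where you diverge is the proof that the trimmed sum of order statistics is lsc. The paper argues quantitatively: given $b$ with $T_{n,t}(\ldots,a,b)>u$, it uses lsc of each $d(x_j,\cdot)$ to produce a single neighborhood $U\in\cT_{o,R}$ on which every $d^2(\cdot,x_j)$ drops by at most $\delta/2$, and then uses the monotonicity of order statistics under coordinatewise lower bounds to conclude $T_{n,t}(\ldots,a,b')>u$ on $U$. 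You instead establish the abstract lemma that each order statistic of finitely many lsc functions is lsc, via the lattice identity $v_{(i)}=\max_{|S|=n-i+1}\min_{j\in S}v_j$ together with stability of lsc under finite minima and maxima. Both arguments are sound; yours is slightly more modular and reusable (the order-statistics lemma is of independent interest), while the paper's is more self-contained and makes the uniformity over the $n$ coordinates explicit. Your remark that the naive decomposition of the trimmed sum into ``full sum minus tails'' fails (because the tails are lsc, not usc) correctly identifies the pitfall that both arguments must avoid.
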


\begin{proof}Fix $(x_1,\ldots,x_n)\in\X^n$, $o\in\X$ and $R>0$. Since $(\X,d)$ is favorable, $B[o,R]$ is compact in the topology $\cT_{o,R}$. Therefore, to prove that $F_{n,t}((x_1,\ldots,x_n),\cdot)$ achieves its infimum over $B[o,R]$, it suffices to show that this  is lower semicontinuous over $B[o,R]$ in the $\cT_{o,R}$ topology. In fact, since 
\[F_{n,t}((x_1,\ldots,x_n),b) = \sup_{a\in \X}T_{n,t}(x_1,\ldots,x_n,a,b),\]
and the supremum of lower semicontinuous functions is also lower semicontinuous, it suffices to show that the map
\[b\in B[o,R]\mapsto T_{n,t}(x_1,\ldots,x_n,a,b)\]
is lower semicontinuous with respect to $\cT_{o,R}$, for each choice of $x_1,\ldots,x_n,a\in\X.$ 
That is, our goal is to show that, for any $u\in\R$, the set
\[O_u:=\{b\in B[o,R]\,:\,T_{n,t}(x_1,\ldots,x_n,a,b)>u\}\]
belongs to the topology  $\cT_{o,R}$. 

To do this, take an arbitrary $b\in O_{u}$ and fix
\[\delta:=  T_{n,t}(x_1,\ldots,x_n,a,b)-u>0.\]
 Because the distance functions are lower semicontinuous, one can find, for each $1\leq i \leq n $, an open neighborhood $U_i\in \cT_{o,R}$ with $b\in U_i$ such that
\[\forall i\leq n\,\forall b'\in U_i\,:\, d(b',x_i)> d(b,x_i)-\frac{\delta}{1+4d(b,x_i)}.\]
Then $U:=\cap_{i\leq n}U_i\in\cT_{o,R}$ is also open,  $b\in U$, and by \eqref{eq:simple.inequality.later.also},
\[\forall b'\in U\,\forall i\leq n\,:\, d^2(b',x_i)> d^2(b,x_i) - \frac{\delta}{2}.\]
As a consequence,
\[\forall b'\in U\,\forall i\leq n\,:\, \left( (d^2(b',x_{j}) - d^2(a,x_j) \right)_{j=1}^n)_{(i)}>  \left( (d^2(b,x_{j}) - d^2(a,x_j) )_{j=1}^n \right)_{(i)}- \frac{\delta}{2}.\]
We deduce:
\[\forall b'\in U\,:\, T_{n,t}(x_1,\ldots,x_n,a,b')\geq T_{n,t}(x_1,\ldots,x_n,a,b) - \frac{\delta}{2}>u.\]
To conclude, we have shown that for an arbitrary $b\in O_u$ there exists an open neighborhood $U\in\cT_{o,R}$ with $b\in U\subset O_u$. Since $b\in O_{u}$ is arbitrary, $O_{u}\in \cT_{o,R}$, as desired.
\end{proof}

\begin{proof}[Proof of Theorem \ref{thm:favorablemeasurable}] We use the notation introduced above: that is, we take $n\in\N$  and $0\leq t<n/2$, and set $\Omega:=\X^{n}$, $\mathcal{A}=$ the Borel product $\sigma$-field, $(\M,\rho)=(\X,d)$, and $F=F_{n,t}$. As we will see below, all assumptions of Theorem \ref{thm:favorablemeasurable} hold for this example. The (measurable) minimizer of $F((x_1,\ldots,x_n),\cdot)$, which is guaranteed to exist by that theorem, is the measurable trimmed mean $\widehat{\mu}_{n,t}(x_1,\ldots,x_n)$ we are looking for.  

We check that the assumptions of Theorem \ref{thm:measurableselection} are satisfied. To start, $F_{n,t}$ is measurable in the first variable~is shown in Proposition \ref{prop:measurablefavorable1}. 

When $x_1,\ldots,x_n\in \X$ are fixed, continuity in $a\in\X$ -- condition 1 in Theorem \ref{thm:measurableselection} -- is shown in Proposition \ref{prop:continuousfavorable}. Proposition \ref{prop:minoverclosedballsfavorable} shows that $F_{n,t}((x_1,\ldots,x_n),\cdot)$ achieves its infimum over closed balls, which is condition 3 in Theorem \ref{thm:measurableselection}. 

It remains to check condition 2, that is, that $F_{n,t}((x_1,\ldots,x_n),\cdot)$ has a global minimizer. To do this, notice that the second assertion of Proposition \ref{prop:continuousfavorable} implies that, with a suitable choice of $o\in\X$ and $R>0$, a minimizer of $F_{n,t}((x_1,\ldots,x_n),\cdot)$ over the closed ball $B[o,3R]$ must be a global minimizer as well. Since minimizers over closed balls exist,  condition 2 in Theorem \ref{thm:measurableselection} obtains, and the proof is finished.\end{proof}

\end{document}